\documentclass[11pt]{amsart}
\usepackage{amsmath,amssymb,amsthm}
\usepackage[latin1]{inputenc}
\usepackage{version,tabularx,multicol}
\usepackage{graphicx,float}
\usepackage{pstricks}
\usepackage{fancyhdr}

\headheight=20pt
\textheight=624pt
\oddsidemargin=18pt
\topmargin=0pt
\textwidth=15,5cm
\evensidemargin=18pt

\newcommand{\reff}[1]{(\ref{#1})}

\theoremstyle{plain}
\newtheorem{theo}{Theorem}[section]
\newtheorem{cor}[theo]{Corollary}
\newtheorem{prop}[theo]{Proposition}
\newtheorem{lem}[theo]{Lemma}
\newtheorem{defi}[theo]{Definition}
\theoremstyle{remark}
\newtheorem{rem}[theo]{Remark}

\newcommand{\cl}{{\mathcal L}}

\newcommand{\cu}{{\mathcal U}}
\newcommand{\cv}{{\mathcal V}}

\newcommand{\E}{{\mathbb E}}

\renewcommand{\P}{{\mathbb P}}

\newcommand{\R}{{\mathbb R}}

\newcommand{\ind}{{\bf 1}}

\newcommand{\lima}{\lim_{c\rightarrow+\infty}}
\newcommand{\limab}{\underset{c\rightarrow+\infty}{\lim}}

\newcommand{\ud}{\mathrm{d}}
\newcommand{\Pp}{\mathbb{P}}

\newcommand{\tV}{\widetilde{\mathcal{V}}}


\newcommand{\ax}{c^{-\alpha}x}

\newcommand{\Vg}[1][c]{\underset{\leftarrow}{V}\hspace{-3pt}_{ m_{#1}}%
}
\newcommand{\Vd}[1][c]{\underset{\rightarrow}{V}\hspace{-3pt}_{ m_{#1}}%
}

\newcommand{\Ed}[1][c]{\underset{\rightarrow}{\E}^{#1}%
}

\newcommand{\uu}[1]{\underline{\underline{#1}}\hspace{0.01cm}}

\begin{document}

\title{Local time of a diffusion in a stable Lévy environment}
\subjclass[2000]{60G51 - 60J55 - 60J60}
\keywords{  Diffusion process, Local time, Lévy process, Random environment }
\author{Roland Diel} 

\address{
Roland Diel,
MAPMO, CNRS UMR 6628,
F\'ed\'eration Denis Poisson FR 2964,
Université d'Orléans,
B.P. 6759,
45067 Orléans cedex 2
FRANCE.
}
  
\email{roland.diel@univ-orleans.fr} 

\author{Guillaume Voisin}

\address{
Guillaume Voisin,
MAPMO CNRS UMR 6628, 
F\'ed\'eration Denis Poisson FR 2964,
Université d'Orléans,
B.P. 6759,
45067 Orléans cedex 2
FRANCE.}

\email{guillaume.voisin@univ-orleans.fr}

\begin{abstract}
Consider a one-dimensional diffusion in a stable Lévy environment. In this article we prove that the normalized local time process recentered at the bottom of the standard valley with height $\log t$, $(L_X(t,\mathfrak m_{\log t}+x)/t,x\in \R)$, converges in law to a functional of two independent Lévy processes, which are conditioned to stay positive. In the proof of the main result, we derive that the law of the standard valley is close to a two-sided Lévy process conditioned to stay positive. Moreover, we compute the limit law of the supremum of the normalized local time. In the case of a Brownian environment, similar result to the ones proved here have been obtained by Andreoletti and Diel \cite{ad:llltbd}.
\end{abstract}

\maketitle

\section{Introduction.}
Let $(V(x),x\in\R)$ be a real-valued càd-làg stochastic process such that $V(0)=0$. A \emph{diffusion in the environment} $V$ is a process $\left(X(V,t), t\geq0\right)$ (or $\left(X(t), t\geq0\right)$ if there is no ambiguity on the environment) whose generator given $V$
 is 
\begin{displaymath}
	\frac{1}{2}e^{V(x)}\frac{d}{dx}\left( e^{-V(x)}\frac{d}{dx} \right).
\end{displaymath}
Remark that if $V$ is almost surely differentiable and $V'$ is bounded, then $(X(t),t\geq0)$ is the solution of the following SDE:
\begin{displaymath}
\left\{ \begin{array}{ll}
\ud X(t)=\ud \beta(t)-\frac{1}{2}V'(X(t))\ud t,\\
X(0)=0
\end{array} \right.
\end{displaymath}
where $\beta$ is a standard Brownian motion independent of $V$. Historically, this model has been introduced by Schumacher  \cite{schumacher}. It was mainly studied when the environment $V$ is a Brownian motion (Brox \cite{b:oddpwm} and Kawazu and Tanaka \cite{kt:omdpdbe} ), a drifted Brownian motion (Kawazu and Tanaka \cite{kt:adpbewd}, Hu, Shi and Yor \cite{hsy:rcddbp}, Devulder \cite{d:assdpdbp} and Talet \cite{t:atebmdbp}  ) or a process which has the same asymptotic behavior as a Brownian motion(Hu and Shi \cite{hs:ltsrwre,hs:lssrwre}), this last case allows to study the discrete analogue of the diffusion in a Brownian environment, the so-called Sinai's random walk. More recently, general Lévy environments have been studied by Carmona \cite{c:mvbmrlp}, Tanaka \cite{t:lpcspdre}, Cheliotis \cite{c:oddare} and Singh \cite{s:rctdsnlp}.

The local time process $(L_X(t,x),t\geq 0,x\in\R)$ is defined by the occupation time formula, i.e., it is the unique process, almost surely continuous in the variable $t$, such that for all bounded Borel function $f$ and for all $t\geq0$,
\begin{equation}\label{toccu}
	\int_0^t f(X_s)\ud s=\int_\R f(x)L_X(t,x)\ud x.
\end{equation}
The existence of such a process can be easily proved, see e.g. Shi \cite{s:ltcre}. In the Brownian case, the local time process was largely studied see e.g. Hu and Shi \cite{hs:ltsrwre,hs:pmvsre}, Shi \cite{s:ltcre}, Devulder \cite{d:mltdpdbp} and Andreoletti and Diel \cite{ad:llltbd}. In this paper, we generalize the result of \cite{ad:llltbd} to the Lévy environment case. However several difficulties arise: contrary to the Brownian motion, the law of a Lévy process is not stable by time inversion; moreover the processes $(V(x)-\inf_{0\leq y \leq x}V(y),x\geq0)$, $(\sup_{0\leq y\leq x}V(y)-V(x),x\geq0)$ and $|V|$ do not have the same law.

In what follows, we focus on the local time of the diffusion when the environment $V$ is an $\alpha$-stable Lévy process on $\R$. We say that $V$ is $\alpha$-stable if there exists $\alpha>0$, such that for all $c>0$
$$ (c^{-1} V(c^\alpha x),x\in \R ) \stackrel{\cl}{=} (V(x),x\in \R) $$ 
where $ \stackrel{\cl}{=} $ is the equality in law. And $V$ is an $\alpha$-stable Lévy process if on top of that it is a Lévy process on $\R$.

One of the main results of this paper is the convergence in law of the supremum of the local time
\begin{displaymath}
\forall t\geq0,\quad L_X^*(t)=\sup_{x\in\R}L_X(t,x)
\end{displaymath}
which measures the time passed by the diffusion in the most visited point.

We next recall the notion of \emph{valley} introduced in the Brownian case by Brox \cite{b:oddpwm} (see also \cite{NevPit}). Denote by $\cv$ the space of càd-làg functions $\omega:\R \rightarrow \R$, endowed with the Skorohod topology (the space $\cv$ is then Polish). Let $c>0$, \label{defvalley} $\omega\in\mathcal{V}$ has a $c$-\textit{minimum} in the point $x_0$ if there exists $\xi$ and $\zeta$ such that $\xi<x_0<\zeta$ and 
\begin{itemize}
 
\item $\omega(\xi)\geq \left(\omega(x_0)\wedge \omega(x_0-)\right)+c$,
\item $\omega(\zeta-)\geq \left(\omega(x_0)\wedge \omega(x_0-)\right)+c\quad$ and
\item for all $x\in(\xi,\zeta)$, $x\neq x_0$, $\omega(x)\wedge \omega(x-)> \omega(x_0)\wedge \omega(x_0-)$.
\end{itemize}
Analogously, $\omega$ has a $c$-\textit{maximum} in $x_0$ if $-\omega$ has a $c$-minimum in $x_0$. Denote by $M_c(\omega)$ the set of $c$-\textit{extrema} of $\omega$. If $\omega$ takes pairwise distinct values in its local extrema and is continuous at these points, then we can check that $M_c(\omega)$ has no accumulation points and that the $c$-maxima and the $c$-minima alternate \label{valstand}. These assumptions are verified for $\alpha$-stable Lévy processes with $\alpha\in [1,2]$.  Thus there exists a triplet $\Delta_c=\Delta_c(\omega)=(\mathfrak p_c,\mathfrak m_c,\mathfrak q_c)$ of successive elements of $M_c$ such that 
\begin{itemize}\label{minvallee}
 \item $\mathfrak m_c$ and $0$ belongs to $[\mathfrak p_c,\mathfrak q_c]$,
\item $\mathfrak p_c$ and $\mathfrak q_c$ are $c$-maxima,
\item $\mathfrak m_c$ is a $c$-minimum.
\end{itemize}
This triplet is called the \textit{standard valley} with height $c$ of $\omega$. The stability of the environment $V$ allows us to reduce the study of a valley with height $c$ to a valley with height $1$. \label{stdval}

In the following, we denote
\begin{equation}\label{w+w-}
\left(\omega^+(x),x\geq0\right)=\left(\omega(x),x\geq0\right) \hbox{ , }\left(\omega^-(x),x\geq0\right) = \left(\omega((-x)-),x\geq0\right).
\end{equation}

Our main result gives the limit law of the local time process recentered on the minimum of the standard valley with height $\log t$.

\begin{theo}\label{cvloi}
Let $V$ be an $\alpha$-stable Lévy process with $\alpha\in [1,2]$ which is not a pure drift.\\
As $t\rightarrow \infty$, the process $\displaystyle \left(\frac{L_X(t,\mathfrak m_{\log t}+x)}{t},x\in\R\right)$
converges weakly in the Skorohod topology to the process $\displaystyle\left(\frac{e^{-\widetilde{V}(x)}}{\int_{-\infty}^{\infty}e^{-\widetilde{V}(y)}\ud y},x\in\R\right)$. The law of $\tilde{V}$ is $\tilde{\P}$ and under $\tilde{\P}(d\omega)$, $\omega^+$ and $\omega^-$ are independent, and distributed respectively as $\P^\uparrow$ and $\hat{\P}^\uparrow$, where $\P^\uparrow$ is the law of the Lévy process $V^+$ conditioned to stay positive, and $\hat{\P}^\uparrow$ is the law of the dual process $V^-$ conditioned to stay positive.
\end{theo}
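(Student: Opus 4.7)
The plan follows Brox's original strategy, generalised by Andreoletti--Diel in the Brownian case, and splits into two largely independent parts: a result on the environment (convergence of the standard valley seen from its bottom) and a result on the diffusion (quasi-stationary description of its local time inside the valley). These are then assembled via a continuous mapping argument.

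\textbf{Step 1 (Environment).} The first task is to show that, as $c=\log t \to \infty$, the pair
\begin{equation*}
\bigl((V(\mathfrak{m}_c + x) - V(\mathfrak{m}_c))_{x\geq 0}, \ (V(\mathfrak{m}_c - x-) - V(\mathfrak{m}_c))_{x\geq 0}\bigr)
\end{equation*}
converges in law, in the Skorohod sense, to $(\widetilde{V}^+, \widetilde{V}^-)$, where $\widetilde{V}^+$ and $\widetilde{V}^-$ are independent and distributed as $\P^\uparrow$ and $\hat{\P}^\uparrow$ respectively. I would exploit the $\alpha$-stable scaling to rewrite $\mathfrak{m}_c = c^\alpha \mathfrak{m}_1^{(c)}$ with $\mathfrak{m}_1^{(c)}$ having the law of the $1$-standard minimum of a rescaled Lévy process, then invoke the path decomposition of a Lévy process at its overall minimum on an interval (in the spirit of Chaumont). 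Up to explicit conditioning on the height of the enclosing maxima, the post-minimum and pre-minimum excursions are respectively the process and its dual conditioned to stay positive. Letting $c\to\infty$ (equivalently, letting the valley width $\mathfrak{q}_c - \mathfrak{p}_c$ go to infinity after undoing the scaling) removes both the conditioning on the maxima and the coupling between the two arms.

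\textbf{Step 2 (Local time).} The second task is to show that once the diffusion has entered the valley $[\mathfrak{p}_c, \mathfrak{q}_c]$, its rescaled local time is essentially given by the speed density $e^{-V}$ restricted to the valley. Using the strong Markov property at the first hitting time of a small neighbourhood of $\mathfrak{m}_c$, together with the standard estimates $\E[\tau_X(\mathfrak{q}_c) \mid X_0=\mathfrak{m}_c]$ of order $e^c$ and exponential concentration of the exit time $\tau_X(\mathfrak{p}_c)\wedge\tau_X(\mathfrak{q}_c)$ around $e^c$, one obtains, in probability,
\begin{equation*}
\sup_{x\in\R}\left|\frac{L_X(t, \mathfrak{m}_c + x)}{t} - \frac{e^{-V(\mathfrak{m}_c+x)}\,\mathbf{1}_{[\mathfrak{p}_c-\mathfrak{m}_c,\mathfrak{q}_c-\mathfrak{m}_c]}(x)}{\int_{\mathfrak{p}_c-\mathfrak{m}_c}^{\mathfrak{q}_c-\mathfrak{m}_c} e^{-V(\mathfrak{m}_c+y)}\,\ud y}\right| \longrightarrow 0.
\end{equation*}
This is a quasi-stationary statement: inside a deep well, the diffusion mixes against its reversible measure $e^{-V(x)}\ud x$ on a time scale much shorter than the exit time, and the ergodic theorem gives the ratio of local time to occupation time in the form of the normalised speed measure. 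Combining Steps 1 and 2 and applying the continuous mapping theorem to the functional $\omega \mapsto e^{-\omega(\cdot)}/\int e^{-\omega(y)}\ud y$ (which is continuous on the set of trajectories drifting to $+\infty$ at $\pm\infty$) yields the announced Skorohod convergence.

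\textbf{Main obstacle.} The delicate step is Step 1. The Brownian argument leaned on path continuity and time-reversal symmetry, both of which fail here: one must handle the jumps of $V$ (so that the standard valley is well defined and its two arms can be glued together at $\mathfrak{m}_c$ without boundary discrepancies) and treat the two sides separately, keeping track of their distinct laws $\P^\uparrow$ and $\hat{\P}^\uparrow$. A related technical point, needed to close Step 2 and to identify the limiting normalisation, is the tightness of $\int_{\mathfrak{p}_c-\mathfrak{m}_c}^{\mathfrak{q}_c-\mathfrak{m}_c} e^{-V(\mathfrak{m}_c+y)}\,\ud y$ and the fact that this integral concentrates on an $O(1)$ window around $0$: one must rule out that deep secondary excursions of $V$ above its minimum contribute significantly, using excursion estimates on $V-\inf V$ inside the valley, and, in the limit, the integrability $\int e^{-\widetilde{V}(y)}\,\ud y <\infty$ almost surely, which follows from the drift to $+\infty$ of both conditioned processes.
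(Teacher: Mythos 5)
Your proposal follows essentially the same two-step architecture as the paper: your Step 1 corresponds to Propositions \ref{vallee}, \ref{convergence} and \ref{conv exp} (path decomposition of the valley into two arms and identification of the limits $\P^\uparrow$ and $\hat{\P}^\uparrow$), your Step 2 to Proposition \ref{cle} and Theorem \ref{limL} (quenched approximation of the local time by the normalised speed density, whose details are delegated to \cite{ad:llltbd}), and your assembly to the closing continuous-mapping argument. Two points in Step 1 are imprecise and worth flagging. First, the pre-minimum arm is \emph{exactly} equal in law to $\hat{\P}^{\uparrow,c}$, with no residual conditioning; only the post-minimum arm deviates from $\P^{\uparrow,c}$, and the deviation is an absolutely continuous density $f_c(\omega(\tau_c))$ depending on the overshoot at level $c$, not a ``conditioning on the enclosing maxima''. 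Making this density disappear in the limit is a genuine technical core of the paper (using the splitting time $\sigma_\varepsilon$ of Lemma \ref{levy renormalise} and a conditional-expectation argument) and is not handled by merely ``undoing the scaling''. Second, the integrability $\int_{-\infty}^{\infty}e^{-\widetilde{V}(y)}\,\ud y<\infty$ does not follow from the mere fact that $\widetilde{V}\to+\infty$; the paper proves it by dominating $\widetilde{V}$ from below by the ladder-height subordinator $U$ and using the strict positivity of its Laplace exponent at $1$.
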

Moreover, we obtain the localization of the favorite point $m^*(t)$ of the diffusion $X$.

\begin{theo}\label{cvptfav}
With the same assumptions as in previous theorem, for any $\delta \geq 0$,
 \begin{equation*}
\lim_{t\rightarrow+\infty} \mathcal{P}\left(\left|m^*(t)-\mathfrak{m}_{\log t}\right|\leq \delta \right)=1
 \end{equation*}
where $m^*(t)=\inf \big\{ x\in \R,\ L_X(t,x)=\sup \{ L_X(t,y), y\in\R\} \big\}$.
\end{theo}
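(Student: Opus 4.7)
The strategy is to deduce the result from the process-level convergence of Theorem~\ref{cvloi} by an argmax-continuous-mapping argument. Write
\[
f_t(x) := \frac{L_X(t,\mathfrak m_{\log t}+x)}{t}, \qquad
f(x) := \frac{e^{-\widetilde V(x)}}{\int_{\R} e^{-\widetilde V(y)}\, \ud y},
\]
so that $m^*(t)-\mathfrak m_{\log t}$ is the location of a maximum of $f_t$, and Theorem~\ref{cvloi} asserts $(f_t(x))_{x\in\R} \Rightarrow (f(x))_{x\in\R}$ in the Skorohod topology. Two features of the limit will be crucial: first, $\widetilde V$ has no fixed discontinuity at $0$ and $\widetilde V(0)=0$, so $f$ is a.s.\ continuous at $0$; second, under $\widetilde\P$ the processes $\omega^\pm$ are $\alpha$-stable L\'evy processes conditioned to stay positive, hence strictly positive and transient. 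Consequently $\widetilde V(x)>0$ for every $x\neq 0$ with $\inf_{|x|\geq\delta}\widetilde V(x)>0$ a.s.\ for any $\delta>0$, so that $f$ attains its maximum uniquely at the origin.

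For a fixed $\delta>0$, the definition of $m^*(t)$ yields the inclusion
\[
\{\,|m^*(t)-\mathfrak m_{\log t}|>\delta\,\} \subset \Bigl\{\sup_{|x|>\delta} f_t(x) \geq f_t(0)\Bigr\},
\]
and the proof reduces to showing that the probability of the right-hand event tends to $0$ as $t\to\infty$. The natural candidate for the limit is $\widetilde\P(\sup_{|x|>\delta}f(x)\geq f(0))$, which vanishes thanks to the strict-maximum property above.

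The main technical point is the justification of this passage to the limit under Skorohod convergence on all of $\R$. The map $g\mapsto g(0)$ is continuous at limits with no jump at $0$, and $g\mapsto\sup_{|x|>\delta}g(x)$ is continuous at limits with no jump at $\pm\delta$ and with suitable decay at $\pm\infty$; the delicate ingredient is this last point, which does not follow from Skorohod convergence alone. I plan to handle it by truncation: choose a large $K>\delta$ at which $f$ is a.s.\ continuous, so that Skorohod convergence restricted to $[-K,K]$ yields the convergence of the sup over $\{\delta<|x|\leq K\}$, and control the tail $\sup_{|x|>K} f_t(x)$ uniformly in $t$ via the mass identity $\int_{\R} f_t(x)\, \ud x = 1$ (obtained from \reff{toccu} with the constant function $1$) together with the tightness estimates already established in the proof of Theorem~\ref{cvloi}. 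Letting $K\to\infty$ produces an arbitrarily small tail contribution on both the prelimit and the limit side, and closes the argument.
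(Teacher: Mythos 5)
Your approach is genuinely different from the paper's, but it has a real gap at the step you yourself flag as delicate.

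First, the paper does not derive Theorem~\ref{cvptfav} from Theorem~\ref{cvloi}. In the paper the localization of $m^*$ is the third statement of Theorem~\ref{limL}, proved directly from the quenched estimates of Proposition~\ref{cle} via Proposition~\ref{cvgmc} (under Hypothesis~\eqref{condlim}), then rescaled, and completed by the dedicated Lemma at the start of Section~3.2 which verifies Hypothesis~\eqref{condlima} for the stable L\'evy environment. Theorem~\ref{cvloi} is a \emph{separate} consequence of the first two statements of Theorem~\ref{limL} together with Proposition~\ref{conv exp}. You are trying to reverse this logic and extract Theorem~\ref{cvptfav} from the distributional limit of Theorem~\ref{cvloi} by an argmax continuous-mapping argument. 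The high-level idea is sound: the limit profile $f$ does have an a.s.\ unique, well-separated maximum at $0$ because $\widetilde V(x)>0$ for $x\neq 0$ and $\inf_{|x|\geq\delta}\widetilde V>0$ a.s.\ (the paper uses exactly this fact when checking~\eqref{condlima}), and the bounded part of your argument (continuous mapping for $\sup_{\delta<|x|\leq K}$ and for evaluation at $0$, using that $\widetilde V$ has no fixed discontinuity) is exactly the device the paper itself uses in the proof of Corollary~\ref{thlimsup}.

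The problem is the tail. You propose to control $\sup_{|x|>K}f_t(x)$ uniformly in $t$ using the mass identity $\int_\R f_t = 1$ together with unspecified ``tightness estimates already established in the proof of Theorem~\ref{cvloi}.'' The mass identity bounds the \emph{integral} of $f_t$ over $\{|x|>K\}$, not its \emph{supremum}; a thin spike far from $0$ has negligible mass and arbitrarily large sup, so this part of the argument is simply incorrect as stated. What does control the tail sup is precisely the second display of Theorem~\ref{limL}, i.e.\ $\sup_{x\notin[a_{cr},b_{cr}]}L_X(e^c,\mathfrak m_c+x)\leq \delta e^c/\int_{a_{cr}}^{b_{cr}}e^{-V_{\mathfrak m_c}}$ with probability tending to one, combined with the first display (to control $f_t$ on $[a_{cr},b_{cr}]\setminus[-\delta,\delta]$) and Hypothesis~\eqref{condlima} (to make $e^{-V_{\mathfrak m_c}(x)}$ uniformly small there). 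None of this follows from the weak convergence in Theorem~\ref{cvloi}; it is the raw input from which Theorem~\ref{cvloi} was built, and it is exactly what the paper's Proposition~\ref{cvgmc} uses directly. Once you invoke those estimates to make your truncation honest, the detour through Theorem~\ref{cvloi} and continuous mapping is redundant: you are back to the paper's proof. As written, the proposal does not give a complete argument.
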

The previous theorems imply the following corollary.

\begin{cor}\label{thlimsup}
With the same assumptions as before, we get
\begin{displaymath}
	\frac{L^*_X(t)}{t}   \xrightarrow[t\rightarrow \infty]{\mathcal L}     \frac{1 }{\int_{-\infty}^{\infty}e^{-\widetilde{V}(y)}\ud y} \label{eqthlimlaw}
\end{displaymath}
where $\widetilde{V}$ is the same process as in the previous theorem and $\xrightarrow{\mathcal L}$ denotes the convergence in law.
\end{cor}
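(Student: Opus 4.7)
The plan is to combine Theorems~\ref{cvloi} and~\ref{cvptfav} via a continuous mapping applied to the supremum functional on a large compact interval. Writing $L^*_X(t)/t = \sup_{x\in\R} L_X(t,\mathfrak{m}_{\log t}+x)/t$, I would first analyse the limit candidate $\widetilde f(x)=e^{-\widetilde V(x)}/\int_\R e^{-\widetilde V(y)}\ud y$. Since under $\widetilde\P$ the branches $\omega^\pm$ are L\'evy processes conditioned to stay positive and $V$ is not a pure drift, one has $\widetilde V(0)=0$, $\widetilde V\ge 0$, and $\widetilde V(x)\to +\infty$ as $|x|\to\infty$ almost surely. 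Consequently $\int_\R e^{-\widetilde V(y)}\ud y<\infty$ a.s., $0$ is a continuity point of $\widetilde V$, and
\begin{equation*}
\sup_{x\in\R}\widetilde f(x)=\widetilde f(0)=\frac{1}{\int_\R e^{-\widetilde V(y)}\ud y}.
\end{equation*}

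The main step is the reduction to a compact interval. Fix $K>0$, chosen to be a continuity point of $\widetilde V$ (which holds for almost every deterministic $K$). By Theorem~\ref{cvptfav}, $\P(m^*(t)\in\mathfrak{m}_{\log t}+[-K,K])\to 1$, hence on an event of probability tending to $1$,
\begin{equation*}
\frac{L^*_X(t)}{t}=\sup_{x\in[-K,K]}\frac{L_X(t,\mathfrak{m}_{\log t}+x)}{t}.
\end{equation*}
The functional $\omega\mapsto\sup_{x\in[-K,K]}\omega(x)$ is continuous on $D([-K,K])$ with the Skorokhod topology (Skorokhod time changes preserve the supremum). Therefore Theorem~\ref{cvloi} and the continuous mapping theorem yield
\begin{equation*}
\sup_{x\in[-K,K]}\frac{L_X(t,\mathfrak{m}_{\log t}+x)}{t} \xrightarrow[t\to\infty]{\mathcal L}\sup_{x\in[-K,K]}\widetilde f(x).
\end{equation*}

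To conclude, I would let $K\to\infty$. Since $\widetilde V(x)\to+\infty$ as $|x|\to\infty$, we have $\sup_{|x|>K}\widetilde f(x)\to 0$ almost surely, so $\sup_{[-K,K]}\widetilde f \to 1/\int_\R e^{-\widetilde V(y)}\ud y$ a.s.~as $K\to\infty$. A standard diagonal/tightness argument combining this with the localization step (via bounded test functions $\phi$ applied to both sides) then delivers the corollary. The main obstacle is that the global supremum on $D(\R)$ is \emph{not} a continuous functional; this is precisely why one must localize to $[-K,K]$ using Theorem~\ref{cvptfav} before invoking continuous mapping, and then pass to the limit in $K$ using the a.s.~integrability of $e^{-\widetilde V}$.
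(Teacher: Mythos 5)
Your argument matches the paper's: localize to a compact window around the favorite point via Theorem~\ref{cvptfav}, then apply the continuous mapping theorem for the supremum functional (which, as you note, requires $\widetilde V$ to be continuous at the deterministic endpoints, true almost surely) together with Theorem~\ref{cvloi}. One simplification you overlook is that since $\widetilde V\geq 0$ with $\widetilde V(0)=0$, the supremum of $\widetilde f(x)=e^{-\widetilde V(x)}/\int_\R e^{-\widetilde V(y)}\ud y$ over any fixed interval $[-\delta,\delta]$ already equals $1/\int_\R e^{-\widetilde V(y)}\ud y$, so the final $K\to\infty$ passage and diagonal argument are vacuous; the paper works with a single fixed $\delta$ throughout.
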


\begin{proof} Recall that a Lévy process is continuous in probability. Thus, for any $x>0$, 
$$\mathcal{P}\big(V(x)=V(x-)\textrm{ and }V(-x)=V((-x)-)\big)=1.$$ 
Fix $\delta>0$. Theorem \ref{cvptfav} shows that it is sufficient to study the supremum of the process $x\mapsto L_X(t,\mathfrak{m}_{\log t}+x )$ on the compact set $[-\delta, \delta]$. 
Proposition VI.2.4 of \cite{js:ltsp} tells us that the function $F\mapsto \sup_{[-\delta,\delta]}F$ defined on $\mathcal V$ is continuous for the Skorohod topology for $F$ such that $F(\pm \delta)=F((\pm \delta)-)$. From Theorem \ref{cvloi} the result follows.
\end{proof}

The proof of Theorems \ref{cvloi} and \ref{cvptfav} is based on the study of the law of the environment. Indeed, for large times the diffusion is located in a valley whose right slope looks like the Lévy process $V^+$ conditioned to stay positive and the left slope like $V^-$ conditioned to stay positive (see Proposition \ref{vallee} for a rigorous statement). And, for large $t$ its local time process, normalized and centered at $\mathfrak m_{\log t}$, behaves like $e^{- V_{\mathfrak m_{\log t}}} / \int e^{- V_{\mathfrak m_{\log t}}}$ where $V_{\mathfrak m_{\log t}}$ is the environment recentered at $\mathfrak m_{\log t}$ (see Theorem \ref{limL} for more details).

The law of the valley has already been obtained by Tanaka \cite{t:ltoddpbe} in the special case of a Brownian environment; in this case, the right and left slopes are both 3-dimensional Bessel processes. In \cite{t:lpcspdre}, Tanaka proved similar results in the Lévy environment case.

This paper is organized as follows. In Section \ref{sec1}, we define the environment $V$ and we give the asymptotic law of the standard valley $V$ on $\R_+$ (Proposition \ref{convergence}) and in Section \ref{sec2}, we prove the convergence of the local time: Theorem \ref{cvloi} is a consequence of Theorem \ref{limL} and Proposition \ref{conv exp} and we also show the localization of the favorite point: Theorem \ref{cvptfav} is the last statement of Theorem \ref{limL}.

\section{standard valley of a Lévy process.}\label{sec1}

\subsection{Lévy process on $\R$.}\label{general}
 A $\cv$-valued random process $V$ is a Lévy process on $\R$ if its increments are independent and stationary (see \cite{c:mvbmrlp}) i.e.
\begin{enumerate}
\item $V(0)=0$,
\item if $x_0 < x_1 < \dots < x_n$ are real numbers, then the random variables $(V(x_{i+1})-V(x_i), 0\leq i \leq n-1)$ are independent,
\item for all $x$ and $y$, the law of $V(x+y)-V(x)$ only depends on $y$.
\end{enumerate}
Denote by $\P$ the law of the Lévy process $V$.

\begin{rem}
An equivalent definition of a Lévy process indexed by $\R$ is given by Cheliotis in \cite{c:oddare}: let $(V^+(t),t\geq0)$ be an $\R$-valued Lévy process starting from 0 and $(V^-(t), t\geq 0)$ a process independent of $V^+$ and with the same law as $-V^+$, we define $(V(t),t\geq 0 )$ by 
\begin{equation}\label{def levy}
\forall t\in\R,\ V(t)=\ind_{t\geq 0}V^+(t) + \ind_{t\leq 0}V^-((-t)-) .
\end{equation}
This process has càd-làg paths and satisfies the three assumptions of the previous definition. Conversely, if the three assumptions hold for a process, then it can be decomposed in two Lévy processes as in Formula \reff{def levy}.
\end{rem}

Define $\underline{V}^+$ the infimum process of $V^+$ and $\overline{V}^+$ the supremum process: for any $t\geq0$, $\underline{V}^+(t)=\inf_{0\leq s \leq t}V^+(s)$ and $\overline{V}^+(t)=\sup_{0\leq s \leq t}V^+(s)$. Let $\mathcal{N}$ be the excursion measure of the process $V^+-\underline{V}^+$ away from 0. We denote by $L$ a local time of $V^+-\underline{V}^+$ in 0 and $L^{-1}(t)=\inf\{ s\geq0 ;  L(s)>t \}$ its right continuous inverse.

Denote by $\cv^+$ the set of càd-làg functions $\omega:\R_+ \rightarrow \R$ endowed with the Skorohod topology. For $c>0$ fixed, we define the first passage time above a level $c$ of $\omega$ and the last previous time in which $\omega$ reaches $0$: for all $\omega\in\cv^+$,
$$\tau_c(\omega) = \inf\{ t\geq 0 ; \omega(t) \geq c \} \hbox{ and } n_c(\omega) = \sup\{ t\leq \tau_c(\omega) ; \omega(t) = 0 \hbox{ or }\omega(t-) = 0\}.$$
We then denote  
$$\underline{\tau}^+_c = \tau_c\left( V^+-\underline{V}^+ \right) \hbox{ , } m^+_c = n_c\left( V^+-\underline{V}^+ \right) \hbox{ and }$$
\begin{align*}
 J^+_c&=\left(V^+(m^+_c)+c\right)\vee\overline{V}^+(m^+_c).
\end{align*}
Similarly, we define analogue variables $\underline{\tau}^-_c$, $m^-_c$ and $J^-_c$ using the process $V^-$. 
\begin{center}
\begin{figure}[ht]
\caption{Standard valley with height $c$}

\scalebox{1} 
{
\begin{pspicture}(0,-2.24)(15.221875,2.24)
\psline[linewidth=0.02cm,arrowsize=0.05291667cm 2.0,arrowlength=1.4,arrowinset=0.4]{<-}(7.06,2.23)(7.08,-2.23)
\psline[linewidth=0.02cm,arrowsize=0.05291667cm 2.0,arrowlength=1.4,arrowinset=0.4]{->}(0.0,-0.55)(14.86,-0.57)
\psline[linewidth=0.02](7.08,-0.59)(7.28,-0.21)(7.38,-0.43)(7.56,-0.01)
\psline[linewidth=0.02](7.56,0.21)(7.68,0.53)(7.86,0.17)(7.94,0.37)(8.16,-0.03)(8.24,0.13)(8.44,-0.17)(8.52,-0.01)
\psline[linewidth=0.02](9.02,-0.61)(9.38,-1.01)(9.54,-0.79)(9.88,-1.17)(10.0,-1.01)(10.42,-1.47)(10.78,-1.05)(10.96,-1.25)
\psline[linewidth=0.02](10.94,-0.91)(11.06,-1.05)(11.52,-0.47)(11.72,-0.69)(11.98,-0.35)
\psline[linewidth=0.02](11.98,0.01)(12.2,-0.19)(12.58,0.27)(12.74,0.09)(13.02,0.43)
\psline[linewidth=0.02](13.02,1.15)(13.24,0.95)(13.54,1.37)(13.82,1.09)(13.96,1.25)
\psline[linewidth=0.01cm,linestyle=dashed,dash=0.16cm 0.16cm](9.4,-1.49)(14.6,-1.51)
\psline[linewidth=0.01cm,linestyle=dashed,dash=0.16cm 0.16cm](13.04,1.77)(13.04,-1.99)
\psline[linewidth=0.01cm,linestyle=dashed,dash=0.16cm 0.16cm](9.44,0.75)(14.64,0.73)
\psline[linewidth=0.01cm,linestyle=dashed,dash=0.16cm 0.16cm](10.42,1.83)(10.42,-1.93)
\usefont{T1}{ptm}{m}{n}
\rput(13.491406,-0.38){$\underline{\tau}^+_c$}
\usefont{T1}{ptm}{m}{n}
\rput(10.771406,-0.3){$m_c^+$}
\psline[linewidth=0.01cm,linestyle=dashed,dash=0.16cm 0.16cm,arrowsize=0.05291667cm 2.0,arrowlength=1.4,arrowinset=0.4]{<->}(14.04,0.75)(14.06,-1.53)
\usefont{T1}{ptm}{m}{n}
\rput(14.261406,0.02){$c$}
\psline[linewidth=0.02](8.54,-0.21)(8.78,-0.51)(8.92,-0.25)(9.02,-0.39)
\psline[linewidth=0.02](7.06,-0.57)(6.96,-0.77)(6.74,-0.41)(6.66,-0.53)
\psline[linewidth=0.02](6.64,-0.27)(6.5,-0.07)(6.38,-0.21)(6.18,0.07)
\psline[linewidth=0.02](6.18,0.25)(6.02,0.09)(5.8,0.37)(5.54,0.03)
\psline[linewidth=0.02](5.58,-0.37)(5.44,-0.57)(5.36,-0.43)(5.1,-0.81)
\psline[linewidth=0.02](5.06,-1.21)(4.92,-1.07)(4.62,-1.47)(4.52,-1.35)(4.3,-1.67)(3.94,-1.23)(3.84,-1.35)(3.6,-1.05)
\psline[linewidth=0.02](3.6,-0.79)(3.44,-0.57)(3.3,-0.77)(2.96,-0.31)
\psline[linewidth=0.02](2.18,0.79)(1.96,1.07)(1.72,0.71)(1.64,0.81)(1.38,0.49)
\psline[linewidth=0.01cm,linestyle=dashed,dash=0.16cm 0.16cm](4.3,1.63)(4.3,-2.13)
\psline[linewidth=0.01cm,linestyle=dashed,dash=0.16cm 0.16cm](2.18,1.67)(2.18,-2.09)
\psline[linewidth=0.01cm,linestyle=dashed,dash=0.16cm 0.16cm](1.02,-1.67)(5.88,-1.67)
\psline[linewidth=0.01cm,linestyle=dashed,dash=0.16cm 0.16cm](1.02,0.61)(5.78,0.61)
\psline[linewidth=0.01cm,linestyle=dashed,dash=0.16cm 0.16cm,arrowsize=0.05291667cm 2.0,arrowlength=1.4,arrowinset=0.4]{<->}(1.56,0.61)(1.56,-1.69)
\usefont{T1}{ptm}{m}{n}
\rput(4.5,-0.3){$-m_c^-$}
\usefont{T1}{ptm}{m}{n}
\rput(2,-0.3){$-\underline{\tau}^-_c$}
\usefont{T1}{ptm}{m}{n}
\rput(1.2814063,0.1){$c$}
\psline[linewidth=0.02](2.96,-0.51)(2.68,-0.19)(2.54,-0.39)(2.18,0.05)
\end{pspicture} 
}

\end{figure}
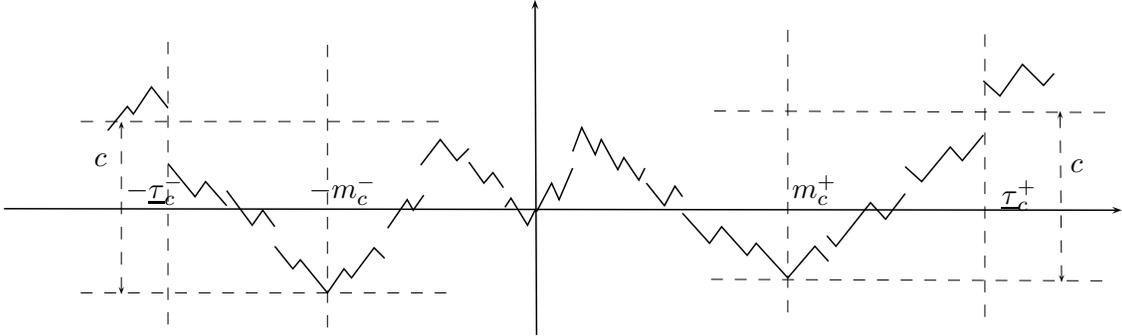
\end{center}
\begin{rem}\label{m=m-oum+}
We show that the minimum of the standard valley with height $c$ defined in the introduction satisfies $\mathfrak m_c=m^+_c$ if $J^+_c<J^-_c$ and $\mathfrak m_c=-m^-_c$ otherwise (see \cite{kesten} for a proof in the Brownian case which is also true in our case). Thus to study the process $V$ in a neighborhood of $\mathfrak m_c$, it is sufficient to study $V^+$ in a neighborhood of $m^+_c$ and $V^-$ in a neighborhood of $m^-_c$.
\end{rem}

Therefore, in this section we study the Lévy process $V^+$ defined on $\R_+$. However, we still denote $\P$ its law and until the end of this section, all the probability measures are measures on $\cv^+$. In order to simplify notations, we write $V$, $m_c$ and $\underline{\tau}_c$ for $V^+$, $m_c^+$ and $\underline{\tau}^+_c$.

\subsection{Valley of the Lévy process on $\R_+$.}\label{loivallee}

We say that $(V(t),0\leq t\leq \underline{\tau}_c)$ is the standard valley with height $c$ of the Lévy process on $\R_+$.

Furthermore, we say that $0$ is regular for $(0,+\infty)$ (resp. for $(-\infty,0)$) if $\P( \inf\{ t>0 ; V(t)>0\}=0)=1$ (resp. $\P( \inf\{ t>0 ; V(t)<0\}=0)=1$). We will only work with Lévy process for which $0$ is regular for $(-\infty,0)$ and $(0,+\infty)$. Thereby we recall some properties of such processes stated in \cite{c:oddare}, which give us the existence of the triplet $(\mathfrak p_c,\mathfrak m_c,\mathfrak q_c)$ defined in the introduction.

For $\omega\in\cv^+$, the point $x_0$ is called a left local maximum (resp. minimum) if there exists $\varepsilon>0$ such that $\omega(x)\leq \omega(x_0-)$ (resp. $\omega(x)\geq \omega(x_0-)$) for all $x\in(x_0-\varepsilon,x_0)$. Similarly, we define a right local maximum and minimum.

\begin{lem}\label{continuite}(\cite{c:oddare}, Lemma 3)\\
Let $V$ be a Lévy process such that 0 is regular for $(-\infty,0)$ and $(0,+\infty)$, then $\P$-a.s.:
\begin{enumerate}
\item $V$ is continuous at its left local extrema and at its right local extrema,
\item In no two local extrema $V$ has the same value.
\end{enumerate}
\end{lem}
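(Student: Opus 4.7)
The proof turns on three ingredients: the strong Markov property of $V$, the regularity of $0$ for $(0,\infty)$ and $(-\infty,0)$, and the time-reversal (duality) identity for L\'evy processes. The plan is to establish~(1) first and then reduce~(2) to it.

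For~(1), since $V$ is c\`ad-l\`ag, a discontinuity at a point $x_0$ is a jump, so it suffices to show that almost surely no jump time of $V$ is a local extremum. Let $T$ be such a jump time. The strong Markov property at $T$ makes $(V(T+u)-V(T))_{u\geq 0}$ a L\'evy process distributed as $V$ and independent of $\mathcal F_T$; regularity of $0$ for both half-lines forces it to visit $(0,\infty)$ and $(-\infty,0)$ in every interval $(0,\varepsilon)$, so $T$ is neither a right local maximum nor a right local minimum. The delicate point is the exclusion of left local extrema at $T$, for which the Markov property cannot be invoked directly. I would appeal to the compensation (L\'evy system) formula, which rewrites a sum over jumps of size in a range $[a,b]$ as an integral against $du\otimes\Pi(d\zeta)$; at a Palm-typical jump time, the pre-$T$ segment of $V$ is then distributed as the unconditioned process run up to a fixed (and subsequently integrated) time. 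Combining this with the standard time-reversal identity $(V(t)-V((t-s)-))_{s\in[0,t]}\stackrel{\mathcal L}{=}(-V(s))_{s\in[0,t]}$ and the fact that the hypothesis on $0$ is symmetric (hence inherited by the dual $-V$), one deduces that $V(T-s)-V(T-)$ takes both strictly positive and strictly negative values for arbitrarily small $s>0$. This excludes $T$ from being a left local maximum or minimum.

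For~(2), suppose $x_1<x_2$ are local extrema of $V$ with $V(x_1)=V(x_2)$. By~(1) both are continuity points, so the equality is unambiguous. Apply the strong Markov property at $x_1$: the shifted process $W(u):=V(x_1+u)-V(x_1)$ is a L\'evy process distributed as $V$ and independent of $\mathcal F_{x_1}$, and $x_2-x_1$ is a local extremum of $W$ at which $W$ vanishes. It is therefore enough to show that, $\mathbb P$-almost surely, no local extremum value of $V$ equals $0$. This I would derive from It\^o excursion theory for the reflected processes $V-\underline V$ and $\overline V-V$: local maxima of $V$ arise as the suprema of the excursions of $\overline V-V$ and local minima as the endpoints of excursions of $V-\underline V$, and the distribution of the excursion heights under the excursion measure $\mathcal N$ is diffuse. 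In particular the countable set of local extremum values of $V$ has no atom at any prescribed level, so distinct extrema take distinct values.

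The hard part will be the treatment of left local extrema at jump times in~(1): since a jump time $T$ is $\mathcal F_{T-}$-measurable, the strong Markov property cannot be applied at $T$, and the argument must be routed through a Palm/compensation identity combined with L\'evy-process duality. The remaining cases, by contrast, reduce fairly directly to the conjunction of the strong Markov property and the regularity hypothesis.
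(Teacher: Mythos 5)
The paper offers no proof of this lemma (it cites \cite{c:oddare}), so I assess your proposal directly.

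On part (1): the jump-time reduction and the strong Markov argument for right local extrema are fine. For left extrema, the L\'evy-system/Palm route is more machinery than you need and hides the essential step, which is a plain time-reversal at a fixed horizon. Set $\hat V_s:=V_{(n-s)-}-V_n$, $0\le s\le n$; the duality lemma gives $\hat V\stackrel{\mathcal L}{=}-V$ on $[0,n]$, a L\'evy process with $0$ again regular for both half-lines; jump times of $\hat V$ of size $\ge\delta$ \emph{are} stopping times for $\hat V$, and $T$ is a left local extremum of $V$ iff $n-T$ is a right local extremum of $\hat V$. So the right-extrema case you already proved, applied to $\hat V$, closes this, letting $\delta\downarrow0$ and $n\uparrow\infty$. (Also note the time-reversal identity you quoted has a sign slip: $(V(t)-V((t-s)-))_{s\le t}\stackrel{\mathcal L}{=}(V(s))_{s\le t}$, not $(-V(s))_{s\le t}$.)

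Part (2) has a genuine gap. A local-extremum time $x_1$ is determined by the path \emph{beyond} $x_1$, so it is not a stopping time, and $V(x_1+\cdot)-V(x_1)$ is \emph{not} a fresh L\'evy process independent of $\mathcal F_{x_1}$; the reduction ``it suffices that no local-extremum value equals $0$'' therefore does not follow from the strong Markov property as you invoke it. The excursion-theoretic description is also inaccurate: zeros of $\overline V-V$ are running-record maxima, not arbitrary local maxima of $V$, and the supremum of an excursion of $\overline V-V$ is attained at a local \emph{minimum} of $V$, not a maximum. The standard repair is to pass to rational intervals. By (1), $V$ is continuous at its local extrema, so each local-extremum value is $\inf_{[p,q]}V$ or $\sup_{[p,q]}V$ for rationals $p<q$ lying inside the defining neighbourhood. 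For two disjoint rational intervals, say $q_1<p_2$, condition on $\mathcal F_{p_2}$ (a \emph{deterministic} time): the first extremum is $\mathcal F_{p_2}$-measurable, while the second equals $V_{p_2}$ plus the corresponding extremum of an independent fresh copy over $[0,q_2-p_2]$. It therefore suffices that $\inf_{[0,s]}V$ and $\sup_{[0,s]}V$ have atomless laws for each $s>0$; regularity excludes the compound Poisson case, so $V_t$ has a diffuse law for $t>0$, and iterating $\underline V_s=\min\bigl(\underline V_{s/2},V_{s/2}+\underline V'_{s/2}\bigr)$ (with $\underline V'$ an independent copy) together with $\underline V_{0+}=0$ a.s. rules out any fixed atom. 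A countable union over rational pairs then gives (2).
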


The point $(2)$ of the previous Lemma gives us the uniqueness of the minimum $m_c$ such that $V(m_c)=\inf_{[0,\underline{\tau}_c]}V$. Moreover, $m_c$ is a local minimum, thus according to point $(1)$ the process $V$ is continuous in $m_c$, that means $ V(m_c)= V(m_c-)$.

\begin{rem}\label{continuite ext locaux}
If $\omega\in\cv^+$ is càd-làg on a bounded interval $I$, then there exists $x,y\in\bar{I}$ such that $\sup_{I}\omega = \omega(x)\vee \omega(x-)$ and $\inf_{I}\omega = \omega(y)\wedge \omega(y-)$. So if $\omega$ is continuous at its local extrema, we get:
$$\exists x\in \bar{I},\hbox{ s.t. }\sup_{I}\omega = \omega(x) \hbox{ and } \exists y\in \bar{I},\hbox{ s.t. }\inf_{I}\omega = \omega(y).$$
\end{rem}

\subsubsection{The pre-infimum and post-infimum processes.}

We now define the pre-infimum and post-infimum processes $(\Vg(t), 0\leq t \leq m_c)$ and $(\Vd(t), 0\leq t \leq \underline{\tau}_c-m_c)$ by
$$\Vg(t) = V((m_c -t)-) - V(m_c)\ \textrm{ and }\ \Vd(t) = V(m_c +t) - V(m_c).$$
When there is no possible mistake, we write $\underset{\leftarrow}{V}$ instead of $\Vg$ and $\underset{\rightarrow}{V}$ instead of $\Vd$. We also denote respectively $\underset{\leftarrow}{\P}^c$ and $\underset{\rightarrow}{\P}^c$ the laws of the processes $\underset{\leftarrow}{V}$ stopped at time $m_c$ and $\underset{\rightarrow}{V}$ stopped at time $\tau_c(\underset{\rightarrow}{V})$.

We now study the law of these two processes. Therefore, we need the notion of Lévy process conditioned to stay positive.

\subsubsection{Lévy process conditioned to stay positive.}\label{deflevypositif}

We present here Bertoin's construction by concatenation of the excursions of the Lévy process above $0$ (see Section 3, \cite{b:stiehlrwlp}).

The process $V$ is a semi-martingale, its local continuous martingale part is null or proportional to a standard Brownian motion. Denote by $\ell$ the local time in a semi-martingale sense of $V$ at 0, see for example \cite{p:sisde} and consider the time passed by $V$ in $(0,\infty)$ and its right continuous inverse,
$$A_t^+ = \int_0^t ds \ind_{V(s)>0} \hbox{ and }\alpha_t^+ = \inf\{ s\geq0 ; A_s^+>t \}.$$
We define the process $V^{\uparrow}$ by
$$
 V^{\uparrow}(t) =\left\{\begin{array}{lr}
  V(\alpha_t^+) + \frac{1}{2} \ell_{\alpha_t^+} + \sum_{0<s\leq \alpha_t^+} (0\vee V(s-)) \ind_{V(s) \leq 0} - (0\wedge V(s-))\ind_{V(s) >0}& \hbox{if}\ t<A_\infty^+,\\
 +\infty& \textrm{otherwise.}
 \end{array}\right.
$$
We denote by $\P^{\uparrow}$ the law of the process $V^{\uparrow}$ and $\hat{\P}^{\uparrow}$ the law of the dual process $\hat{V}=-V$ conditioned to stay positive.
\begin{rem}\label{def:vuparrow}
We will mainly use this expression to prove that when the process $V$  is stable, $V^\uparrow$ is stable too.
\end{rem}
We state some properties of the Lévy process conditioned to stay positive (see \cite{b:stiehlrwlp}):
\begin{itemize}
\item $\P$-a.s. for all $t>0$, $V^\uparrow(t)>0$,
\item in the case where $V$ has no Brownian part, $\ell=0$,
\item the process $V^\uparrow$ tends to $+\infty$ as $t$ goes to $+\infty$.
\end{itemize}

 For every  process $Y$ which is lower bounded, we define the future infimum: for all $t\geq0$, $\uu{Y}(t)=\inf_{s\geq t}Y(s)$. For $c>0$, we introduce the time
$$\hat{m}^\uparrow_c = n_c(\hat{V}^{\uparrow} - \uu{\hat{V}}^{\uparrow}) .$$

We write $\P^{\uparrow,c}$ (resp. $\hat{\P}^{\uparrow,c}$) for the law of the process $V^{\uparrow}$ (resp. $\hat{V}^{\uparrow}$) stopped at time $\tau_c(V^\uparrow)$ (resp. $\hat{m}^\uparrow_c$).

\subsubsection{Valley with height $c$.}
Unlike the Brownian case where the law of the post-infimum process is equal to the law of a 3 dimensional Bessel process, in the Lévy case, this law is only absolutely continuous with respect to the law of a Lévy process conditioned to stay positive. In order to have the law of the standard valley, we use Proposition 4.7 of Duquesne \cite{d:pdrlp} and therefore we have to introduce some notations. 

For $t\geq0$, define $U(t)=-V(L^{-1}(t))$ if $L(\infty)>t$ and $U(t)=+\infty$ otherwise (recall that $L$ is the local time of $V-\underline{V}$ in $0$). The process $(L^{-1},U)$ is called the ladder process, it is a subordinator. We then define the potential measure $\cu$ associated with $U$: for all measurable positive function $F$,
$$\int_{\R}\cu(dx)F(x) = \E \left[ \int_0^{L(\infty)} dv F(U(v)) \right].$$

For the excursion measure $\mathcal{N}$ and for all measurable non-negative function $f$ on $\cv$, we write
$$\mathcal{N}(f)=\int f(v)\mathcal{N}(dv).$$

\begin{prop}\label{vallee}
We consider a Lévy process $V$, which does not go to $+\infty$ neither to $-\infty$ and such that 0 is regular for $(0,+\infty)$ and for $(-\infty,0)$.

Then the pre-infimum and post-infimum processes $(\underset{\leftarrow}{V}(t), 0\leq t \leq m_c)$ and $( \underset{\rightarrow}{V}(t) , 0\leq t \leq \tau_c(\underset{\rightarrow}{V})) $ are independent. Moreover

\begin{enumerate}
\item The law  $\underset{\leftarrow}{\P}^{c}$ of the pre-infimum process $(\underset{\leftarrow}{V}(t), 0\leq t \leq m_c)$ is equal to the law $\hat{\P}^{\uparrow,c}$ of the process $(\hat{V}^{\uparrow} (t), 0\leq t \leq \hat{m}^\uparrow_c)$.

\item The law $\underset{\rightarrow}{\P}^c$ of the post-infimum process $(\underset{\rightarrow}{V}(t),0\leq t \leq \tau_c(\underset{\rightarrow}{V}) )$ is absolutely continuous with respect to the law $\P^{\uparrow,c}$ of the process $(V^{\uparrow} (t), 0\leq t \leq \tau_c(V^\uparrow))$.

More precisely, $\displaystyle \underset{\rightarrow}{\P}^{c}(d\omega) = f_c\left(\omega(\tau_c(\omega))\right) \P^{\uparrow,c}(d\omega)$

where for all $x\in\R_+$, $\frac{1}{f_c(x)}=\cu \left( [0, x) \right) \mathcal{N}(\tau_c(v) < \infty)$.
\end{enumerate}
\end{prop}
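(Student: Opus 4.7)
The plan is to apply Proposition~4.7 of Duquesne \cite{d:pdrlp}, which provides a Williams-type path decomposition of a L\'evy process at its infimum, combined with standard It\^o excursion theory for the reflected process $V - \underline{V}$. By Lemma \ref{continuite}, the infimum of $V$ on the bounded interval $[0, \underline{\tau}_c]$ is attained at a single time at which $V$ is continuous; by the definition of $\underline{\tau}_c$, this time coincides with $m_c$, since the excursion of $V - \underline{V}$ straddling $\underline{\tau}_c$ is the first one to exceed height $c$ and its left endpoint is exactly $m_c$. The path on $[0, \underline{\tau}_c]$ thus splits canonically into the pre-infimum piece $(V(t))_{0 \leq t \leq m_c}$ and the post-infimum piece $(V(m_c+t) - V(m_c))_{0\leq t\leq \underline{\tau}_c - m_c}$. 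Their independence follows from It\^o's theory: the straddling excursion is the first atom of the Poisson point process of excursions of height $>c$, hence independent of the earlier atoms and of the associated ladder subordinator $U$, which together generate the pre-infimum path.

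For part~(1), I apply the classical time-reversal identity for L\'evy processes at their infimum (see \cite{b:stiehlrwlp}, Section~3): the reversed pre-infimum $(\underset{\leftarrow}{V}(t), 0\leq t\leq m_c) = (V((m_c-t)-) - V(m_c), 0\leq t\leq m_c)$ has the law of the dual process $\hat{V}=-V$ conditioned to stay positive, stopped at the reversal image of $m_c$. In $\hat{V}^\uparrow$, this image is characterized as the left endpoint of the first excursion away from the future infimum $\uu{\hat{V}}^\uparrow$ of height at least $c$, i.e.\ $\hat m^\uparrow_c = n_c(\hat{V}^\uparrow - \uu{\hat{V}}^\uparrow)$. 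This yields the identification $\underset{\leftarrow}{\P}^c = \hat{\P}^{\uparrow, c}$.

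For part~(2), the post-infimum piece is distributed as an excursion of $V-\underline{V}$ under the conditioned measure $\cn(\,\cdot\,\mid \tau_c < \infty)$ stopped at $\tau_c$, which contributes the normalizing factor $1/\cn(\tau_c<\infty)$. Duquesne's Proposition~4.7, which decomposes the L\'evy process at its infimum through the descending ladder subordinator $U$ and the excursion measure $\cn$, identifies this conditioned excursion law with $\P^{\uparrow,c}$ weighted by the density $1/\cu([0,\omega(\tau_c)))$: this factor compensates the size-biasing of the terminal height of an excursion conditioned to exceed level $c$, the weight being precisely the potential measure of $U$ evaluated at the terminal value. The product of the two densities is exactly $f_c(\omega(\tau_c(\omega)))$. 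The main technical obstacle will be transcribing Duquesne's formula, naturally phrased in terms of a deterministic infimum level or an independent exponential killing, into the present ``first excursion above height $c$'' formulation, and checking that the normalizations involving $\cu$ and $\cn$ are compatible with the conventions adopted in Section~\ref{sec1}.
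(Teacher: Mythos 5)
Your overall strategy — split via Itô excursion theory for independence, time-reversal for part (1), Duquesne's Proposition~4.7 for part (2) — is exactly the route the paper takes, and your treatment of independence and of part (2) coincides with the paper's argument essentially line for line. (Your closing worry about ``transcribing Duquesne's formula'' appears to be unfounded: the paper applies Proposition~4.7 directly in the form you quote, the excursion measure $\mathcal N$ and potential measure $\cu$ being the same objects.)

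Where you have a genuine gap is in part~(1), at the step you state as an assertion: that the reversal image of $m_c$ is $\hat m^\uparrow_c = n_c(\hat V^\uparrow - \uu{\hat V}^\uparrow)$. This is the crux of the proposition, and the paper devotes most of the proof to it. The pre-infimum process $\underset{\leftarrow}{V}$ is not obtained from a single global time-reversal whose law you can read off a ``classical'' identity; rather the paper realizes it in two steps — first reversing the \emph{order} of the excursion atoms of the Poisson point process $(p_1,p_2)$ below $L(\underline\tau)$, then applying the path reversal $[\cdot]$ to each excursion — and exhibits the resulting object as $\big(\mathcal R_{V-\underline V}(t)-\underline V(d(t))\big)_{0\le t\le m}$, whose unconditional law is $\hat\P^\uparrow$ by Theorem~28 of \cite{d:ftlp}. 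The delicate part is then to recognize the stopping time: one must prove the identity $\mathcal R_{V-\underline V}(t)=Y(t)-\uu{Y}(t)$ for $Y(t)=\mathcal R_{V-\underline V}(t)-\underline V(d(t))$, which in turn rests on showing $\uu{Y}(t)=-\underline V(d(t))$ using that $\mathcal R_{V-\underline V}$ vanishes at the times $\{d(t)-\}$ (Corollary~1 of \cite{r:nirlp}) while $-\underline V(d(\cdot))$ increases only by jumps at $\{d(t)\}$. Only after this does one get $m=\sup\{t\le\underline\tau'\,;\,Y(t)-\uu{Y}(t)=0\}=\hat m^\uparrow_1$, which is the asserted endpoint identification. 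Your sketch skips this entire chain; it cannot be dismissed as routine, since $m_c$ is not an independent exponential or deterministic time but a measurable functional of the path, and the standard splitting statements do not directly give the image of such an endpoint under reversal.
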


\begin{rem}\hspace{0.1cm}\label{rem 1ere prop}
\begin{enumerate}
\item When $V$ is a Lévy process with no positive jumps, $\omega(\tau_c(\omega))=c$, $\P^{\uparrow,c}-a.s$. Thus, $\omega\mapsto f_c(\omega(\tau_c(\omega)))$ is constant and equal to $1$ and the post-infimum process is equal in law to the process $V^\uparrow$ conditioned to stay positive.

\item $\P$-a.s., $\underset{\leftarrow}{V}$ is continuous in its local extrema (see Lemma \ref{continuite}), then if $c$ tends to $+\infty$, using point (1) of  Proposition \ref{vallee}, $\hat{V}^\uparrow$ and $V^\uparrow$ are continuous at their local extrema too.

\end{enumerate}
\end{rem}

\begin{proof}[Proof of the Proposition \ref{vallee}]

$(1)$  We begin with the proof of the first part of Proposition \ref{vallee} and the independence of the two processes for $c=1$. To simplify the notations, we will not write the index $1$. 

We want to express the pre-infimum process $\underset{\leftarrow}{V}$ in terms of the excursions of $V$ above its minimum and of the  associated jump of $\underline{V}$. We consider the couple of processes $(V-\underline{V} , -\underline{V})$ and we give an expression using excursions. We use the proof of Lemma 4 of \cite{b:sdtplspi}, remark that the result is still true for every Lévy process that does not converge to $-\infty$, see  Theorem 27 of \cite{d:ftlp}.

In order to use excursion theory, we denote by $\bar{\cv}$ the set of the càd-làg paths $v:[0,\infty)\rightarrow \R_+$ such that
\begin{itemize}
\item[(i)] $v(t) >0$ for $0<t<\sigma(v) = \inf\{ s>0, v(s)=0 \}$,
\item[(ii)] $v(0)=v(t)=0$ for $t\geq\sigma(v)$.
\end{itemize}

We consider the excursion process of $V-\underline{V}$ above 0 defined by:
\begin{align*}
p_1(t) &= 
\left\{
\begin{array}{ll}
r\mapsto(V-\underline{V})(L^{-1}(t-)+r)\ind_{[0,L^{-1}(t)-L^{-1}(t-)]}(r) & \hbox{ if } L^{-1}(t-)<L^{-1}(t)\\
0 & \hbox{ if } L^{-1}(t-)=L^{-1}(t)
\end{array}
\right. \\
\hbox{and the} & \hbox{ jumps process of $\underline{V}$ defined by:}\\
p_2(t) &=  \underline{V}(L^{-1}(t-)) - \underline{V}(L^{-1}(t)).
\end{align*}

Thus the process $p(\cdot)=(p_1(\cdot),p_2(\cdot))$ is a $\bar{\cv}\times \R_+$-valued Poisson point process (see for example \cite{gp:filpsm}). Remark that $p_1\left( L(\underline{\tau})\right)$ is the excursion starting from $m$, that is to say that $L(\underline{\tau})$ is the first time $t$ where the excursion $p_1(t)$ has a height bigger than $1$. Thus, the processes $(p(t) ; 0\leq t < L(\underline{\tau}) )$ and $(p(t) ; t\geq L(\underline{\tau}) )$ are independent. The pre-infimum process depends only on the first and the post-infimum process depends only on the second, then these two processes are independent too.

In order to obtain the process $\underset{\leftarrow}{V}$, we first invert the excursions of $V-\underline{V}$ then we invert the time in each of these excursions.

Denote by $\tilde{p}_i = \left(p_i(L(\underline{\tau})-t)\ind_{t<  L(\underline{\tau})} + p_i(t)\ind_{t \geq L(\underline{\tau})} \, ; t\geq0\right)$ the process $p_i$ where the order of the excursions before $L(\underline{\tau})$ is changed for $i=1$ and $i=2$. The law of the Poisson point process $p$ is stable by inversion of the order of its excursions before $L(\underline{\tau})$, we deduce the equality in law of the two processes $(\tilde{p}_1(t),\tilde{p}_2(t))$ and $(p_1(t),p_2(t))$.

Consider now the time inversion in the excursions defined for $v\in \bar{\cv}$ by 
$$[v](s) = 
\begin{cases}
v((\sigma(v)-s)-) &\hbox{ for }  0\leq s \leq \sigma(v)\\
0 & \hbox{ for }  s> \sigma(v).
\end{cases}
$$

Using this transformation in the previous law, the processes $([\tilde{p}_1(t)],\tilde{p}_2(t)\, ; t\geq0)$ and $([p_1(t)],p_2(t)\, ; t\geq0)$ have same law. With the first one we get the process $\underset{\leftarrow}{V}$. Indeed, $ \left([\tilde{p}_1(t)]\, ; t\geq0 \right)$ is the excursion process of $\underset{\leftarrow}{V}$ above its future infimum and $ \left(\tilde{p}_2(t)\, ; t\geq0 \right)$ represents the jumps of the future infimum of $\underset{\leftarrow}{V}$. We now use the couple $([p_1(t)],p_2(t)\, ; t\geq0)$ to get a Lévy process conditioned to stay positive.

\label{procr}We recall the notations of \cite{b:sdtplspi}, for $t\geq 0$, we use $g(t) = \sup\{ s<t ; V(s)=\underline{V}(s) \}$ and $d(t) = \inf\{ s>t ; V(s)=\underline{V}(s) \}$, the left and right extremities of the excursion interval of $V-\underline{V}$ away from 0 that contains $t$ and
$$\mathcal R_{V-\underline{V}}(t) =
\begin{cases}
(V-\underline{V})((d(t)+g(t)-t)-) & \hbox{ if  } g(t)<d(t) \\
0 & \hbox{otherwise}.
\end{cases}
$$

See Figure \ref{fig:V} and \ref{fig:RV}.

The process $L$ is also a local time in 0 for the process $\mathcal R_{V-\underline{V}}$. So the excursion process of $\mathcal R_{V-\underline{V}}$  above 0, parametrized by $L$, is the Poisson point process $([p_1(t)],t\geq0)$ and, similarly, the excursion process of $\left( (V-\underline{V})((m-t)-) \ind_{0\leq t < m} + \mathcal R_{V-\underline{V}}(t) \ind_{t\geq m} , t\geq0\right)$ above 0 is  the Poisson point process $([\tilde{p}_1(t)],t\geq0)$. We have an analogue equality for the jumps associated with every excursion. Then we get the equality in law of the processes

\begin{flushleft}
$\displaystyle \left( (V-\underline{V})((m-t)-) \ind_{0\leq t < m} + \mathcal R_{V-\underline{V}}(t) \ind_{t\geq m} ,\right. $
\end{flushleft}

\begin{flushright}
$\displaystyle  \left.(\underline{V}((m -t)-) - \underline{V}( m ))\ind_{0\leq t < m} - \underline{V}(d(t)) \ind_{t\geq m} \; ; \; t\geq 0\right)$
\end{flushright}

\begin{center}
and
\end{center}

$$\left( \mathcal R_{V-\underline{V}}(t)  , -\underline{V}(d(t) ; t \geq 0) \right) .$$

Adding the two parts for $t<m$ we get: 
\begin{align*}
\left( \underset{\leftarrow}{V}(t) ; 0\leq t < m\right) &\stackrel{\cl}{=} \left( \mathcal R_{V-\underline{V}}(t)  -\underline{V}(d(t)) ; 0\leq t < m\right),
\end{align*}

See Figure \ref{fig:RV} and \ref{fig:preinf}.


\begin{center}
\begin{figure}[ht]
\caption{The Lévy process $(V(t);t\geq0)$}\label{fig:V}

\scalebox{1} 
{
\begin{pspicture}(0,-2.235)(9.745,2.235)
\psline[linewidth=0.01cm,arrowsize=0.05291667cm 2.0,arrowlength=1.4,arrowinset=0.4]{<-}(0.0,2.23)(0.02,-2.23)
\psline[linewidth=0.01cm,arrowsize=0.05291667cm 2.0,arrowlength=1.4,arrowinset=0.4]{->}(0.02,-0.63)(9.74,-0.63)
\psline[linewidth=0.024cm](3.24,-1.13)(4.08,-1.13)
\psline[linewidth=0.02](3.26,-1.11)(3.4,-0.57)(3.52,-0.95)
\psline[linewidth=0.02](3.52,-0.67)(3.66,-0.25)(3.74,-0.49)(3.78,-0.33)(3.96,-0.81)(4.06,-0.59)
\psline[linewidth=0.024cm](4.06,-1.27)(5.68,-1.27)
\psline[linewidth=0.02](4.06,-1.27)(4.2,-0.97)(4.28,-1.11)(4.46,-0.73)
\psline[linewidth=0.02](4.46,-0.97)(4.56,-1.11)(4.76,-0.63)(4.86,-0.79)(5.02,-0.39)
\psline[linewidth=0.02](5.02,-0.73)(5.1,-0.53)(5.28,-0.81)(5.34,-0.65)(5.58,-0.93)(5.66,-0.77)
\psline[linewidth=0.02](6.2,-1.47)(6.4,-0.97)(6.52,-1.19)(6.74,-0.67)
\psline[linewidth=0.02](6.74,-0.33)(6.98,-0.69)(7.18,-0.17)(7.3,-0.37)(7.5,0.17)
\psline[linewidth=0.02](7.5,0.73)(7.74,0.25)(7.82,0.43)(8.12,-0.07)
\psline[linewidth=0.02](8.12,-0.33)(8.4,-0.75)(8.5,-0.61)(8.86,-1.13)
\psline[linewidth=0.024cm](5.66,-1.47)(8.84,-1.47)
\psline[linewidth=0.016cm,linestyle=dashed,dash=0.16cm 0.16cm](5.82,0.43)(8.84,0.41)
\psline[linewidth=0.016cm,linestyle=dashed,dash=0.16cm 0.16cm](6.2,0.85)(6.2,-2.01)
\psline[linewidth=0.016cm,linestyle=dashed,dash=0.16cm 0.16cm](7.5,0.85)(7.46,-2.11)
\psline[linewidth=0.016cm,linestyle=dashed,dash=0.16cm 0.16cm,arrowsize=0.05291667cm 2.0,arrowlength=1.4,arrowinset=0.4]{<->}(8.68,0.41)(8.68,-1.47)
\usefont{T1}{ptm}{m}{n}
\rput(8.82375,-0.545){\tiny $1$}
\usefont{T1}{ptm}{m}{n}
\rput(7.70375,-0.545){\tiny $\underline{\tau}$}
\usefont{T1}{ptm}{m}{n}
\rput(6.33375,-0.545){\tiny $m$}
\psline[linewidth=0.024cm](0.02,-0.61)(1.8,-0.61)
\psline[linewidth=0.02](0.02,-0.61)(0.22,0.13)(0.36,-0.31)(0.46,-0.03)
\psline[linewidth=0.02](0.46,0.19)(0.62,-0.11)(0.7,0.11)(0.94,-0.35)(1.0,-0.15)
\psline[linewidth=0.02](0.98,0.03)(1.06,0.27)(1.3,-0.23)(1.38,0.01)(1.56,-0.35)(1.62,-0.15)(1.8,-0.47)
\psline[linewidth=0.024cm](1.8,-0.81)(3.26,-0.81)
\psline[linewidth=0.02](1.8,-0.79)(2.04,-0.19)(2.12,-0.39)
\psline[linewidth=0.02](2.12,-0.55)(2.24,-0.27)(2.38,-0.55)(2.46,-0.37)(2.54,-0.49)
\psline[linewidth=0.02](2.54,-0.23)(2.64,-0.05)(2.92,-0.47)(3.0,-0.31)(3.18,-0.57)(3.26,-0.43)
\psline[linewidth=0.016cm,linestyle=dashed,dash=0.16cm 0.16cm](1.82,0.23)(1.8,-1.29)
\psline[linewidth=0.016cm,linestyle=dashed,dash=0.16cm 0.16cm](3.26,0.23)(3.24,-1.31)
\psline[linewidth=0.016cm,linestyle=dashed,dash=0.16cm 0.16cm](2.46,0.83)(2.44,-1.71)
\usefont{T1}{ptm}{m}{n}
\rput(3.45375,0.015){\tiny $d(t)$}
\usefont{T1}{ptm}{m}{n}
\rput(1.95375,0.035){\tiny $g(t)$}
\usefont{T1}{ptm}{m}{n}
\rput(2.56375,0.115){\tiny $t$}
\psline[linewidth=0.024](5.66,-1.47)(5.8,-1.19)(5.86,-1.29)(5.98,-1.07)
\psline[linewidth=0.024](5.96,-1.39)(6.04,-1.25)(6.18,-1.45)
\end{pspicture} 
}

\end{figure}
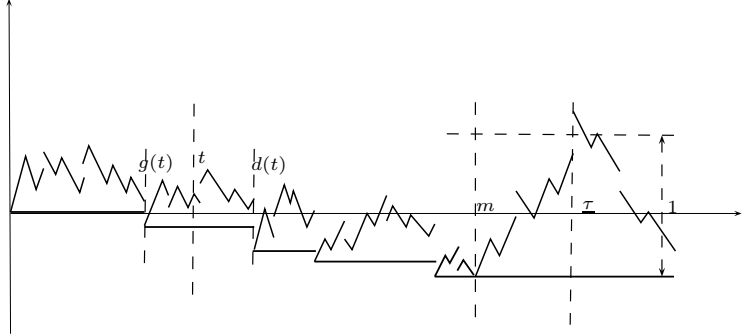
\end{center}



\begin{center}
\begin{figure}[ht]
\caption{The process $\left( \mathcal R_{V-\underline{V}}(t)  -\underline{V}(d(t)); t\geq 0\right)$}\label{fig:RV}

\scalebox{1} 
{
\begin{pspicture}(0,-2.375)(9.725,2.38)
\psline[linewidth=0.01cm,arrowsize=0.05291667cm 2.0,arrowlength=1.4,arrowinset=0.4]{<-}(0.02,2.09)(0.04,-2.37)
\psline[linewidth=0.01cm,arrowsize=0.05291667cm 2.0,arrowlength=1.4,arrowinset=0.4]{->}(0.0,-0.87)(9.72,-0.87)
\psline[linewidth=0.024cm](4.08,-0.188)(3.24,-0.188)
\psline[linewidth=0.02](4.050482,-0.19998996)(3.9121473,0.36009675)(3.793575,-0.034038343)
\psline[linewidth=0.02](3.793575,0.25637698)(3.6552408,0.692)(3.5761926,0.44307256)(3.5366685,0.60902417)(3.3588102,0.11116932)(3.26,0.33935282)
\psline[linewidth=0.024cm](5.7,-0.048)(4.08,-0.048)
\psline[linewidth=0.02](5.7,-0.048)(5.56,0.252)(5.48,0.112)(5.3,0.492)
\psline[linewidth=0.02](5.3,0.252)(5.2,0.112)(5.0,0.592)(4.9,0.432)(4.74,0.832)
\psline[linewidth=0.02](4.74,0.492)(4.66,0.692)(4.48,0.412)(4.42,0.572)(4.18,0.292)(4.1,0.452)
\psline[linewidth=0.02](8.861875,0.17)(8.661875,0.67)(8.541875,0.45)(8.321875,0.97)
\psline[linewidth=0.02](8.321875,1.31)(8.081875,0.95)(7.881875,1.47)(7.761875,1.27)(7.561875,1.81)
\psline[linewidth=0.02](7.561875,2.37)(7.321875,1.89)(7.241875,2.07)(6.941875,1.57)
\psline[linewidth=0.02](6.941875,1.31)(6.661875,0.89)(6.561875,1.03)(6.24,0.55)
\psline[linewidth=0.024cm](8.981875,0.17)(5.68,0.19)
\psline[linewidth=0.016cm,linestyle=dashed,dash=0.16cm 0.16cm](9.241875,2.07)(6.221875,2.05)
\psline[linewidth=0.024cm](1.82,-0.668)(0.04,-0.668)
\psline[linewidth=0.02](1.82,-0.668)(1.62,0.072)(1.48,-0.368)(1.38,-0.088)
\psline[linewidth=0.02](1.38,0.132)(1.22,-0.168)(1.14,0.052)(0.9,-0.408)(0.84,-0.208)
\psline[linewidth=0.02](0.86,-0.027999999)(0.78,0.212)(0.54,-0.288)(0.46,-0.048)(0.28,-0.408)(0.22,-0.208)(0.04,-0.528)
\psline[linewidth=0.024cm](3.28,-0.528)(1.82,-0.528)
\psline[linewidth=0.02](3.28,-0.508)(3.04,0.092)(2.96,-0.108)
\psline[linewidth=0.02](2.96,-0.268)(2.84,0.012)(2.7,-0.268)(2.62,-0.088)(2.54,-0.208)
\psline[linewidth=0.02](2.54,0.052)(2.44,0.232)(2.16,-0.188)(2.08,-0.027999999)(1.9,-0.288)(1.82,-0.148)
\psline[linewidth=0.02cm,linestyle=dashed,dash=0.16cm 0.16cm,arrowsize=0.05291667cm 2.0,arrowlength=1.4,arrowinset=0.4]{<->}(8.52,2.07)(8.52,0.17)
\usefont{T1}{ptm}{m}{n}
\rput(8.671406,1.22){\tiny $1$}
\psline[linewidth=0.02cm,linestyle=dashed,dash=0.16cm 0.16cm](6.22,1.55)(6.22,-0.85)
\usefont{T1}{ptm}{m}{n}
\rput(6.041406,-1.12){\tiny $m$}
\psline[linewidth=0.024](6.22,0.19)(6.08,0.47)(6.02,0.37)(5.9,0.59)
\psline[linewidth=0.024](5.92,0.27)(5.84,0.41)(5.7,0.21)
\end{pspicture} 
}

\end{figure}
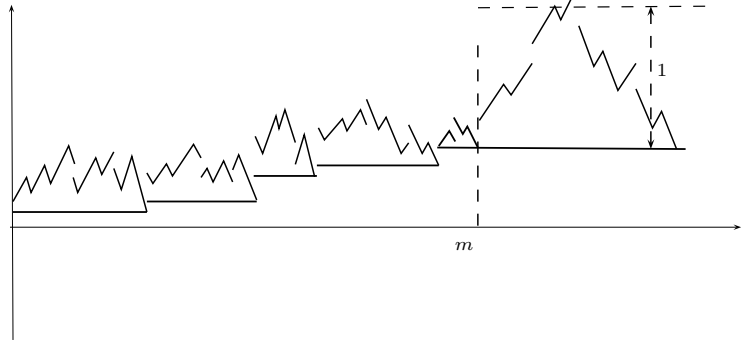
\end{center}



\begin{center}
\begin{figure}[ht]
\caption{The pre-infimum process    } \label{fig:preinf}
\scalebox{1} 
{
\begin{pspicture}(0,-2.245)(9.755,2.248)
\psline[linewidth=0.01cm,arrowsize=0.05291667cm 2.0,arrowlength=1.4,arrowinset=0.4]{<-}(0.010000031,2.22)(0.030000031,-2.24)
\psline[linewidth=0.01cm,arrowsize=0.05291667cm 2.0,arrowlength=1.4,arrowinset=0.4]{->}(0.030000031,-0.64)(9.75,-0.64)
\psline[linewidth=0.016cm,linestyle=dashed,dash=0.16cm 0.16cm](6.17,2.24)(6.19,-0.96)
\usefont{T1}{ptm}{m}{n}
\rput(6.48375,-0.515){\tiny $m$}
\psline[linewidth=0.024cm](2.9520001,-0.32)(2.112,-0.32)
\psline[linewidth=0.02](2.932,-0.3)(2.792,0.24)(2.672,-0.14)
\psline[linewidth=0.02](2.672,0.14)(2.532,0.56)(2.4520001,0.32)(2.412,0.48)(2.232,0.0)(2.132,0.22)
\psline[linewidth=0.024cm](2.132,-0.46)(0.512,-0.46)
\psline[linewidth=0.02](2.132,-0.46)(1.992,-0.16)(1.9120001,-0.3)(1.732,0.08)
\psline[linewidth=0.02](1.732,-0.16)(1.6320001,-0.3)(1.432,0.18)(1.332,0.02)(1.172,0.42)
\psline[linewidth=0.02](1.172,0.08)(1.092,0.28)(0.91200006,0.0)(0.85200006,0.16)(0.61200005,-0.12)(0.532,0.04)
\psline[linewidth=0.024cm](6.19,0.2)(4.392,0.2)
\psline[linewidth=0.02](6.172,0.2)(5.972,0.94)(5.8320003,0.5)(5.732,0.78)
\psline[linewidth=0.02](5.732,1.0)(5.572,0.7)(5.492,0.92)(5.252,0.46)(5.192,0.66)
\psline[linewidth=0.02](5.212,0.84)(5.132,1.08)(4.892,0.58)(4.812,0.82)(4.632,0.46)(4.572,0.66)(4.392,0.34)
\psline[linewidth=0.024cm](4.392,0.0)(2.932,0.0)
\psline[linewidth=0.02](4.392,0.02)(4.152,0.62)(4.072,0.42)
\psline[linewidth=0.02](4.072,0.26)(3.9520001,0.54)(3.812,0.26)(3.732,0.44)(3.652,0.32)
\psline[linewidth=0.02](3.652,0.58)(3.552,0.76)(3.272,0.34)(3.1920002,0.5)(3.012,0.24)(2.932,0.38)
\psline[linewidth=0.024](0.532,-0.66)(0.39200002,-0.38)(0.33200002,-0.48)(0.21200003,-0.26)
\psline[linewidth=0.024](0.23200004,-0.58)(0.15200002,-0.44)(0.012000032,-0.64)
\psline[linewidth=0.024cm](0.51000005,-0.64)(0.012000032,-0.64)
\end{pspicture} 
}

\end{figure}
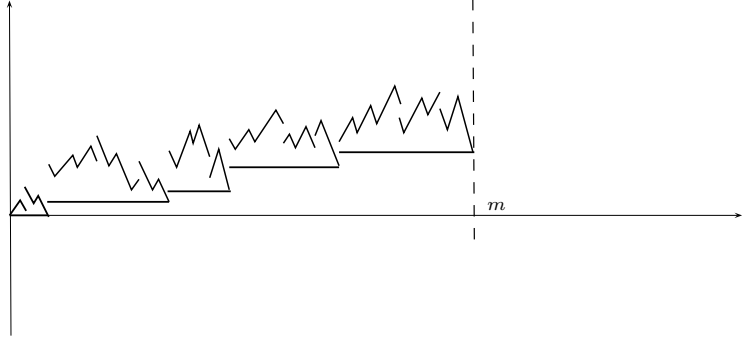
\end{center}


We now use Theorem 28 of \cite{d:ftlp} : under $\P$, the law of the process $\left( \mathcal R_{V-\underline{V}}(t)  -\underline{V}(d(t)) ,t\geq0\right)$ is $\hat{\P}^{\uparrow}$.

Now we express the final time $m$ in terms of the process $\left( \mathcal R_{V-\underline{V}}(t)  -\underline{V}(d(t)), t\geq 0\right)$.

Recall that $\underline{\tau} = \inf\{ t\geq 0 ; V(t)-\underline{V}(t) \geq 1 \}$ and that $m = \sup\{ t\leq \underline{\tau} ; V(t) - \underline{V}(t) = 0 \}$. We define the variable $\underline{\tau}' = \tau(  \mathcal R_{V-\underline{V}}) = \inf\{t\geq 0 ; \mathcal R_{V-\underline{V}}\geq 1    \}$. Notice that $\underline{\tau}$ and $\underline{\tau}'$ belong to the same excursion interval of $V-\underline{V}$. The variable $m$ remains stable by time inversion in every excursions of $V-\underline{V}$ and by the inversion of the excursions before $m$, then $m=\sup\{t\leq \underline{\tau}' ; \mathcal R_{V-\underline{V}}(t)=0\}$. We write 
$$Y(t) := \mathcal R_{V-\underline{V}}(t) - \underline{V}(d(t)).$$
We prove that
$$\mathcal R_{V-\underline{V}}(t) = Y(t) - \uu{Y}(t),$$
it is enough to prove that, for a fixed $t\geq0$, the post-infimum process $\uu{Y}(t) = \inf_{s\geq t}Y(s)$ is equal to $- \underline{V}(d(t))$.

On the interval $[t,d(t))$, the process $(\underline{V}(d(s)),t\leq s < d(t))$ is constant and $$\inf_{t\leq s < d(t)} \mathcal R_{V-\underline{V}}(s) = \mathcal R_{V-\underline{V}}(d(t)-)=0$$ then $\inf_{t\leq s < d(t)}Y(s) = - \underline{V}(d(t))$. The process $\mathcal R_{V-\underline{V}}$ is non-negative and vanishes at times  $\{d(t)-,t\geq 0\}$ (see Corollary 1, \cite{r:nirlp}), and the process $- \underline{V}(d(.))$ increases by jumps at times $\{d(t),t\geq 0\}$, then $\uu{Y}(t)=\inf_{t\leq s < d(t)}Y(s)$ and so $\uu{Y}(t) =- \underline{V}(d(t))$. We deduce that 

$$m=\sup\{t\leq \underline{\tau}' ; Y(t) - \uu{Y}(t)=0\}.$$

Finally, under $\P$, the law of the process $\left( \mathcal R_{V-\underline{V}}(t)  -\underline{V}(d(t)) \right)_{0\leq t\leq m}$ is the law of the process  $\hat{V}^{\uparrow}$ stopped at time $\hat{m}^{\uparrow}$ and the point (1) is proved.

\vspace{0.5cm}

$(2)$ Now we consider the post-infimum process. Let $F$ be a bounded measurable function on $\cv^+$. We can write 
$$\underset{\rightarrow}{\E}^{c} \left[ F\left( \omega  \right)\right] = \mathcal{N} \left( F\left( v(\cdot \wedge \tau_c)  \right) | \tau_c(v) < \infty \right).$$

We use Proposition 4.7 of \cite{d:pdrlp}: for all bounded measurable function $G$ on $\cv^+$, 

\begin{align*}
\E^{\uparrow,c}\left[ G(\omega) \right] &= \mathcal{N}\left( G\left( v(\cdot \wedge \tau_c) \right)  \cu \left( [0, v(\tau_c) ) \right) | \tau_c <\infty \right) \mathcal{N}\left( \tau_c(v) <\infty \right)\\
&= \mathcal{N}\left( \frac{G( v(\cdot\wedge \tau_c) )}{ f_c(v(\tau_c))} \Big | \tau_c(v) <\infty \right)
\end{align*}
where we recall that for all $x\in\R_+$, $\frac{1}{f_c(x)} = \cu \left( [0, x) \right) \mathcal{N}(\tau_c(v) < \infty)$. Using the function $G( v(\cdot\wedge \tau_c) ) = F\left( v(\cdot \wedge \tau_c)  \right) f_c(v(\tau_c))$, we obtain the second point of the proposition.

\end{proof}

\subsection{Stable Lévy process}

\begin{defi}(\cite{b:pl}, Section VIII)\\
The Lévy process $V$ is stable with index $\alpha \in (0,2]$ if, for all $c>0$
$$ \left(c^{-1} V(c^\alpha t),t\geq 0\right) \stackrel{\cl}{=} \left(V(t),t\geq 0\right) .$$
\end{defi}

For $\alpha\in(0,1)\cup(1,2)$, the Fourier transform of an $\alpha$-stable Lévy process $V$ is : for all $t\geq0$ and for all $\lambda\in \R$, $\E \left[ e^{i\lambda V(t)}\right] = e^{-t\psi(\lambda)}$ with
$$\psi(\lambda) = k |\lambda|^\alpha \left(1-i \beta \hspace{0,2cm} sgn(\lambda)\tan(\frac{\pi \alpha}{2}) \right)$$
where $k>0$ and $\beta\in[-1,1]$.\\
The Lévy measure of the process $V$ is absolutely continuous with respect to the Lebesgue measure, it can be written as follows:
$$\pi(dx) = c^+ x^{-\alpha-1} \ind_{x > 0} dx + c^-  x ^{-\alpha-1} \ind_{x<0} dx$$
where $c^+$ and $c^-$ are positive numbers non equal to 0 together and such that $\beta = \frac{c^+ - c^-}{c^+ + c^-}$.

In the quadratic case ($\alpha=2$), $V$ is a Brownian motion and its characteristic exponent can be written $\psi(\lambda) = k \lambda^2$ where $k$ is a positive number. The Lévy measure is then trivial.

In the case $\alpha=1$, $V$ is a Cauchy process and $\psi(\lambda)=k|\lambda| + id\lambda$ with  $k>0$ and $d\in \R$. The Lévy measure is then proportional to $x^{-2} dx$.

\begin{rem}\label{stabmc} If the process $V$ is $\alpha$-stable, then the pre-infimum processes $\left(c^{-1}\Vg(c^\alpha t),t\geq0\right)$ and $\Vg[1]$ (stopped at correct times) have the same law. This also holds for the post-infimum processes $\left(c^{-1}\Vd(c^\alpha t),t\geq0\right)$ and $\Vd[1]$. 
\end{rem}

\begin{lem}Let $V$ be an $\alpha$-stable Lévy process which is not a pure drift. Then $V$ is recurrent if and only if $\alpha \geq1$.
\end{lem}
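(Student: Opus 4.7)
The plan is to apply the classical Chung--Fuchs criterion for Lévy processes on $\R$: the process $V$ is recurrent if and only if
$$\int_{-\eta}^{\eta} \mathrm{Re}\!\left(\frac{1}{\psi(\lambda)}\right) d\lambda = +\infty$$
for some (equivalently every) $\eta>0$, where $\psi$ is the characteristic exponent defined by $\E[e^{i\lambda V(t)}]=e^{-t\psi(\lambda)}$. Since the explicit form of $\psi$ in each stable regime has just been recalled, the proof reduces to computing $\mathrm{Re}(1/\psi)$ near the origin and checking local integrability.

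First, in the generic case $\alpha\in(0,1)\cup(1,2)$, I start from
$$\psi(\lambda)=k|\lambda|^\alpha\bigl(1-i\beta\,\mathrm{sgn}(\lambda)\tan(\pi\alpha/2)\bigr),$$
set $\theta:=\beta\tan(\pi\alpha/2)$, and obtain by direct computation
$$\mathrm{Re}\!\left(\frac{1}{\psi(\lambda)}\right)=\frac{1}{k(1+\theta^2)}\cdot\frac{1}{|\lambda|^\alpha}.$$
This is integrable near $0$ iff $\alpha<1$, which yields transience for $\alpha\in(0,1)$ and recurrence for $\alpha\in(1,2)$. For $\alpha=2$, the scaling relation $c^{-1}V(c^2 t)\stackrel{\cl}{=}V(t)$ rules out any linear drift, so $\psi(\lambda)=k\lambda^2$ and $\mathrm{Re}(1/\psi(\lambda))=1/(k\lambda^2)$ is manifestly non-integrable at the origin, yielding recurrence (as expected for Brownian motion).

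The remaining case $\alpha=1$ is where the assumption "not a pure drift" enters. Here $\psi(\lambda)=k|\lambda|+id\lambda$, and the exclusion of pure drift precisely means $k>0$. Then
$$\mathrm{Re}\!\left(\frac{1}{\psi(\lambda)}\right)=\frac{k}{(k^2+d^2)|\lambda|},$$
which has a non-integrable singularity at $0$, so $V$ is recurrent. Conversely, if $k=0$ and $d\neq 0$, then $V(t)=dt$ is deterministic and trivially transient, which shows that the hypothesis cannot be dropped.

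The only real subtlety to keep in mind is that the generic formula for $\psi$ breaks down at $\alpha=1$, since $\tan(\pi\alpha/2)$ diverges; this is precisely why the Cauchy case must be treated separately, and why the "not a pure drift" assumption is needed there to rule out the degenerate choice $k=0$. Apart from this bookkeeping, the argument is a routine application of Chung--Fuchs.
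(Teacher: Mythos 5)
Your proof is correct and follows essentially the same route as the paper: both apply the Chung--Fuchs integral criterion $\int_{-\eta}^{\eta}\Re(1/\psi)\,d\lambda=\infty$ (Theorem I.17 of Bertoin) to the explicit stable characteristic exponents and check local integrability of $|\lambda|^{-\alpha}$ near the origin, handling the Cauchy case $\alpha=1$ separately where the ``not a pure drift'' hypothesis forces $k>0$. The only cosmetic difference is that you single out $\alpha=2$, whereas the paper's $\alpha\neq 1$ formula already covers it since $\tan(\pi)=0$.
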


\begin{proof}We use a  condition of recurrence given in \cite{b:pl}, Theorem I.17: $V$ is recurrent if and only if $\int_{-r}^r \Re \left[ \frac{1}{\psi(\lambda)}\right]d\lambda = +\infty$ where $\Re[z]$ is the real part of the complex number $z$.

For $\alpha=1$, $\int_{-r}^r \frac{k|\lambda|}{(k^2+d^2)\lambda^2}d\lambda = +\infty$ because $k\neq0$.

For $\alpha\neq 1$, $\int_{-r}^r \frac{1}{k|\lambda|^\alpha(1+\beta^2 \tan^2(\frac{\pi \alpha}{2}))}d\lambda<+\infty$ if and only if $\alpha <1$. 
\end{proof}

Then we use a stable Lévy process with index $\alpha\in [1,2]$ and which is not a pure drift. We need others properties for $V$, mainly to use the results of Section \ref{loivallee}.

\begin{lem}
Let $V$ be a stable Lévy process with index $\alpha\in [1,2]$ and which is not a pure drift. Then,
\begin{itemize}
\item 0 is regular for $(0,+\infty)$ and for $(-\infty,0)$,
\item $\P$-a.s., $\underset{t\rightarrow +\infty}{\lim\sup}V(t) =+\infty$ and  $\underset{t\rightarrow +\infty}{\lim\inf}V(t) = -\infty$.
\end{itemize}
\end{lem}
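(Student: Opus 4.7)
The plan is to establish the two claims separately, by combining classical L\'evy-process criteria with the scaling invariance of $V$.

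For the regularity of $0$, I would appeal to Rogozin's criterion (see Bertoin \cite{b:pl}): $0$ is regular for $(0,+\infty)$ if and only if $\int_0^1 t^{-1}\P(V(t)>0)\,dt=+\infty$. The central observation is that by $\alpha$-stability, $V(t)\stackrel{\cl}{=}t^{1/\alpha}V(1)$, so that $\rho:=\P(V(t)>0)=\P(V(1)>0)$ is a constant independent of $t$. It then suffices to verify that $\rho\in(0,1)$, which I would do by inspecting the three sub-cases described in the paper. For $\alpha=2$, $V(1)$ is a centred Gaussian and $\rho=1/2$. For $\alpha=1$ (not pure drift, so $k>0$), the characteristic exponent $\psi(\lambda)=k|\lambda|+id\lambda$ shows that $V(1)$ is a symmetric Cauchy variable of scale $k$ shifted by $d$, which charges both half-lines. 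For $\alpha\in(1,2)$, the assumption $c^++c^->0$ together with $\alpha>1$ implies that $V(1)$ has a continuous density positive on all of $\R$. In each case $\rho\in(0,1)$, the integral diverges, and regularity for $(0,+\infty)$ follows. Regularity for $(-\infty,0)$ is obtained by applying the same reasoning to the dual process $-V$, which is again $\alpha$-stable and not a pure drift.

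For the oscillation claim, I would use the standard trichotomy for L\'evy processes (Bertoin \cite{b:pl}): almost surely, either $V(t)\to+\infty$, or $V(t)\to-\infty$, or $\limsup_{t\to\infty}V(t)=+\infty$ and $\liminf_{t\to\infty}V(t)=-\infty$. The previous lemma asserts that $V$ is recurrent for $\alpha\in[1,2]$, and recurrence is incompatible with drift to $\pm\infty$ since in that case $V$ would eventually leave every bounded neighbourhood of the origin forever. Only the oscillation alternative remains, which gives the claim.

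The only delicate point is the verification that $\rho\in(0,1)$ in the boundary regime $\alpha=1$, where the characteristic exponent forces $V$ to be Cauchy plus drift rather than carrying a genuine skewness parameter. Once this is settled, the proof reduces to direct applications of Rogozin's criterion, the trichotomy, and the recurrence result already established, so no further technical obstacle arises.
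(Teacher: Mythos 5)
Your proof is correct, and the first bullet is handled exactly as in the paper: Rogozin's criterion (Prop.~VI.11 in Bertoin) combined with the scaling $V(t)\stackrel{\mathcal L}{=}t^{1/\alpha}V(1)$ to see that $\rho=\P(V(t)>0)$ is constant in $t$, and then the observation that $\rho\in(0,1)$ since $V$ is not a pure drift. The extra care you take with the three sub-cases $\alpha=2$, $\alpha=1$, $\alpha\in(1,2)$ is not needed --- the paper simply cites \cite{b:pl}, VIII.1 for positivity of $\rho$ --- but it is accurate.

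Where you genuinely diverge is the second bullet. The paper notes directly that $\int_1^\infty t^{-1}\P(V(t)\geq 0)\,dt=\int_1^\infty t^{-1}\P(V(t)\leq 0)\,dt=+\infty$ (again by $t$-independence of $\rho$) and invokes Bertoin's integral test, Theorem~VI.12, which immediately yields the oscillation. You instead combine the trichotomy for L\'evy processes with the preceding lemma asserting recurrence for $\alpha\geq1$: recurrence excludes drift to $\pm\infty$, so only the oscillation alternative survives. This works, and it elegantly reuses the recurrence lemma, but it is a slight detour: the integral test the paper uses is exactly the tool whose hypotheses you have just verified in establishing Rogozin's criterion, so the paper obtains oscillation essentially for free from the same computation. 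Your route depends on the trichotomy (whose proof in Bertoin's book is itself the integral test of Theorem~VI.12), so the two arguments are ultimately the same fact seen from different distances; still, the paper's version is the more self-contained one given what is already on the page.
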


\begin{proof}\hspace{1cm}

$\bullet$ Rogozin criterion (see Prop. VI.11 of \cite{b:pl})  gives a necessary and sufficient condition to have the regularity in 0:

0 is regular for  $(0,+\infty)$ (resp. for $(-\infty,0)$) if and only if $\displaystyle \int_0^1 t^{-1}\P(V(t)\geq 0)dt = +\infty$ (resp. $\displaystyle \int_0^1 t^{-1}\P\left(V(t) \leq 0\right)dt = +\infty)$.

Remark, using the stability property of $V$, that $\P(V(t)\geq 0)$ does not depend on $t$. Moreover this probability is positive if $V$ is not a pure drift, see \cite{b:pl}, VIII.1. 

With the previous criterion, $0$ is regular for $(0,+\infty)$ and for $(-\infty,0)$.

$\bullet$ We have $\int_1^\infty t^{-1}\P(V(t)\geq0)dt = \int_1^\infty t^{-1}\P(V(t)\leq0)dt = +\infty$, this implies that $\P$-a.s., $\underset{t\rightarrow +\infty}{\lim\sup}V(t) =+\infty$ and  $\underset{t\rightarrow +\infty}{\lim\inf}V(t) = -\infty$, see \cite{b:pl}, Theo. VI.12.

\end{proof}

\begin{lem}\label{stabiliteuparrow}
We consider a stable Lévy process $V$ with index $\alpha\in(0,2]$. Then the process $V^{\uparrow}$ is an $\alpha$-stable process.
\end{lem}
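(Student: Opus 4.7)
The plan is to apply the scaling property of $V$ directly inside the explicit construction of $V^\uparrow$ given above, and check that every building block scales by the same factor. Fix $c>0$ and set $\widetilde V(s)=c^{-1}V(c^\alpha s)$; by $\alpha$-stability, $\widetilde V\stackrel{\cl}{=}V$. It suffices to show
\begin{equation*}
\widetilde V^{\uparrow}(t)=c^{-1}V^{\uparrow}(c^\alpha t), \qquad t\geq 0,
\end{equation*}
pathwise, since then $\widetilde V^{\uparrow}\stackrel{\cl}{=}V^{\uparrow}$ will force $(c^{-1}V^{\uparrow}(c^\alpha t))\stackrel{\cl}{=}V^{\uparrow}$.

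First, I would compute the time-change of the occupation functional. The substitution $u=c^\alpha s$ gives $A^+_t(\widetilde V)=\int_0^t\ind_{V(c^\alpha s)>0}\,ds=c^{-\alpha}A^+_{c^\alpha t}(V)$, whence $\alpha^+_t(\widetilde V)=c^{-\alpha}\alpha^+_{c^\alpha t}(V)$, and also $\{t<A^+_\infty(\widetilde V)\}=\{c^\alpha t<A^+_\infty(V)\}$ so the finiteness condition is preserved. Consequently $\widetilde V(\alpha^+_t(\widetilde V))=c^{-1}V(\alpha^+_{c^\alpha t}(V))$.

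Next, I would handle the jump sum: since jumps of $\widetilde V$ at time $s$ are scaled copies ($\Delta\widetilde V(s)=c^{-1}\Delta V(c^\alpha s)$) of jumps of $V$ at $c^\alpha s$, the same change of variable $u=c^\alpha s$ gives
\begin{equation*}
\sum_{0<s\leq \alpha^+_t(\widetilde V)}\!(0\vee\widetilde V(s-))\ind_{\widetilde V(s)\leq 0}-(0\wedge\widetilde V(s-))\ind_{\widetilde V(s)>0}
=c^{-1}\!\!\sum_{0<u\leq \alpha^+_{c^\alpha t}(V)}\!(0\vee V(u-))\ind_{V(u)\leq 0}-(0\wedge V(u-))\ind_{V(u)>0}.
\end{equation*}
It remains to treat the semimartingale local time term. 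For $\alpha\in(0,2)$ the process $V$ has no Brownian part, so $\ell\equiv 0$ and this term is absent, as recalled in Section \ref{deflevypositif}. For $\alpha=2$, $V$ is a constant multiple of Brownian motion, and its semimartingale local time at $0$ satisfies the Brownian scaling $\ell^{\widetilde V}_t=c^{-1}\ell^{V}_{c^\alpha t}$; composed with the time change this yields $\tfrac12\ell^{\widetilde V}_{\alpha^+_t(\widetilde V)}=c^{-1}\cdot\tfrac12\ell^{V}_{\alpha^+_{c^\alpha t}(V)}$.

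Summing the three contributions produces exactly $c^{-1}V^{\uparrow}(c^\alpha t)$. The only mildly delicate step is the local-time scaling in the Brownian case; everything else is a careful bookkeeping of the substitution $u=c^\alpha s$. Once the identity above is pathwise established, the $\alpha$-stability of $V^{\uparrow}$ follows at once.
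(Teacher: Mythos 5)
Your proof is correct and follows essentially the same strategy as the paper for $\alpha\in(0,2)$: you plug the scaling of $V$ into Bertoin's explicit construction of $V^\uparrow$ and check that the occupation time $A^+$, its inverse $\alpha^+$, and the jump-compensation sum all scale coherently (the paper compresses this into the remark that $A^+$ and $\alpha^+$ are "stable with index $1$," which is exactly what your substitution $u=c^\alpha s$ establishes). The one genuine divergence is the case $\alpha=2$: the paper sidesteps the local-time term entirely by invoking McKean's identification of $V^\uparrow$ with a $3$-dimensional Bessel process, whose $2$-self-similarity is classical, whereas you keep the same scaling argument and add the Brownian scaling $\ell^{\widetilde V}_t=c^{-1}\ell^V_{c^\alpha t}$ of the semimartingale local time. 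Your route is more uniform and self-contained (no appeal to the Bessel characterization), at the cost of one extra lemma on local-time scaling; the paper's is shorter but relies on an external result. Both are valid.
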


\begin{proof}
In the case where $\alpha=2$, the stable Lévy process $V$ is a standard Brownian motion, $V^{\uparrow}$ is a $3$-dimensional Bessel process (see \cite{mck:ensd}) and then is stable with index $2$.

If $\alpha\in(0,2)$, the stable Lévy process has no Brownian part. Recall that $\alpha_t^+$ is the right continuous inverse of $A_t^+=\int_0^t \ind_{V(s)>0} ds $. These two processes are stable with index $1$. We use the definition given in Section \ref{deflevypositif}:
$$V^{\uparrow}(t) = V(\alpha_t^+) + \sum_{0<s\leq \alpha_t^+} \left(0\vee V(s-) \right)\ind_{V(s) \leq 0} - \left(0\wedge V(s-)\right)\ind_{V(s) >0}, $$
We deduce that the process $V^{\uparrow}$ is stable with index $\alpha$.

\end{proof}

\begin{prop}\label{convergence}
We consider a stable Lévy process with index $\alpha\in [1,2]$ and which is not a pure drift. When $c$ converges to $+\infty$:
\begin{enumerate}
\item the law $\underset{\leftarrow}{\P}^{c}$ converges weakly (in the Skorohod sense) to the law $\hat{\P}^{\uparrow}$.

\item the law $\underset{\rightarrow}{\P}^{c}$ converges weakly (in the Skorohod sense) to the law $\P^{\uparrow}$.

\end{enumerate}
\end{prop}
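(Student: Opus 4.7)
For part (1), Proposition \ref{vallee}(1) already identifies $\underset{\leftarrow}{\P}^c$ with the law of $\hat V^\uparrow$ stopped at time $\hat m_c^\uparrow$. It then suffices to show that $\hat m_c^\uparrow \to +\infty$ almost surely as $c \to +\infty$: since $\hat V^\uparrow \to +\infty$, the reflected process $\hat V^\uparrow$ minus its future infimum is recurrent at $0$ and reaches arbitrarily large heights, so both its first passage time above $c$ and the preceding excursion-end $\hat m_c^\uparrow$ diverge. On any compact horizon $[0,T]$ the stopped path thus coincides almost surely with $\hat V^\uparrow$ for $c$ large enough, which yields Skorohod convergence $\underset{\leftarrow}{\P}^c \Rightarrow \hat{\P}^\uparrow$.

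For part (2), when $V$ has no positive jumps, Remark \ref{rem 1ere prop}(1) reduces the statement to the same argument as above (with $\tau_c(V^\uparrow) \to +\infty$). In the general case, I would handle the Radon-Nikodym factor from Proposition \ref{vallee}(2) by exploiting $\alpha$-stability to extract a scaling identity for $f_c$. The rescaling map $\phi_c(\omega) = c^{-1}\omega(c^\alpha\cdot)$ preserves $\P^\uparrow$ (Lemma \ref{stabiliteuparrow}), sends $\tau_c$ to $\tau_1$, and pushes $\underset{\rightarrow}{\P}^c$ and $\P^{\uparrow,c}$ onto $\underset{\rightarrow}{\P}^1$ and $\P^{\uparrow,1}$ respectively (by Remark \ref{stabmc}). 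Matching the two density formulas of Proposition \ref{vallee}(2) for heights $c$ and $1$ and using $\phi_c(\omega)(\tau_1(\phi_c(\omega))) = c^{-1}\omega(\tau_c(\omega))$ forces the relation $f_c(cx) = f_1(x)$, so
\[
\int F\,d\underset{\rightarrow}{\P}^c = \E^{\uparrow,c}\!\left[F(\omega)\,f_1\!\left(\omega(\tau_c)/c\right)\right]
\]
for any bounded measurable $F$. By stability, the law $\mu$ of $V^\uparrow(\tau_c)/c$ under $\P^\uparrow$ does not depend on $c$, is supported on $[1,\infty)$, and satisfies $\int f_1\,d\mu = 1$. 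Moreover $f_1$ is bounded on $[1,\infty)$ since $\cu([0,x)) \geq \cu([0,1)) > 0$ there; together with $\tau_c \to +\infty$ almost surely, this reduces the problem to showing that the density factor asymptotically decouples from the test function.

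To do so, I would take $F$ bounded continuous and cylindrical in times $t_1 < \cdots < t_k$ and apply the Markov property of $V^\uparrow$ at $t_k$ to rewrite the integral as $\E^\uparrow[F(\omega)\,\Psi_c(V^\uparrow(t_k))]$ with $\Psi_c(y) := \E^\uparrow_y[f_1(V^\uparrow(\tau_c)/c)]$. A further application of $\phi_c$, which is compatible with the $h$-transform defining $V^\uparrow$ because the renewal function $\cu([0,\cdot))$ is itself homogeneous under stability, yields $\Psi_c(y) = \E^\uparrow_{y/c}[f_1(V^\uparrow(\tau_1))]$. Weak continuity of the entrance law $\P^\uparrow_y \Rightarrow \P^\uparrow$ as $y \downarrow 0$ then gives $\Psi_c(y) \to 1$, and dominated convergence closes the cylindrical case; extension to general bounded continuous $F$ on $\cv^+$ comes from tightness of $(\underset{\rightarrow}{\P}^c)_c$, itself inherited from the tightness of $(\P^{\uparrow,c})_c$ via the bounded density. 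The main difficulty is precisely the entrance-law step, namely $\P^\uparrow_y \Rightarrow \P^\uparrow$ as $y \downarrow 0$ together with the continuity at zero of $y \mapsto \E^\uparrow_y[f_1(V^\uparrow(\tau_1))]$, a delicate but standard fact from the fluctuation theory of Lévy processes conditioned to stay positive.
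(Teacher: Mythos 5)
Part (1) of your argument is the paper's argument verbatim: identify $\underset{\leftarrow}{\P}^c$ with $\hat{\P}^{\uparrow,c}$ via Proposition \ref{vallee}(1) and send $\hat m_c^\uparrow\to\infty$.

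Part (2) is a genuinely different route. The paper, after performing the same scaling reduction to $\E^\uparrow\bigl[F\bigl(c\,\omega(c^{-\alpha}\cdot)\bigr)f_1(\omega(\tau_1))\bigr]$, decouples $F$ from the density $f_1(\omega(\tau_1))$ by conditioning at the \emph{stopping time} $\sigma_\varepsilon$ and invoking Lemma \ref{levy renormalise}, whose whole point is that the post-$\sigma_\varepsilon$ increment process is an \emph{exact, independent} copy of $V^\uparrow$ under $\P^\uparrow$, started afresh from $0$. The conditional expectation then becomes $\chi(\omega(\sigma_\varepsilon))=\E^\uparrow[f_1(x+\omega(\tau_{1-x}))]\big|_{x=\omega(\sigma_\varepsilon)}$, and the passage $\varepsilon\to 0$ is handled with elementary a.s.\ continuity facts ($\sigma_\varepsilon\to 0$, $\omega(\sigma_\varepsilon)\to 0$, $\omega(\tau_y)\to\omega(\tau_1)$) plus the explicit formula \reff{expressionf1} for $f_1$. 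You instead condition at a fixed time $t_k$, reducing $\Psi_c(y)=\E_y^\uparrow[f_1(V^\uparrow(\tau_c)/c)]$ to $\E_{y/c}^\uparrow[f_1(V^\uparrow(\tau_1))]$ by scaling, and then invoke the entrance-law convergence $\P_{y'}^\uparrow\Rightarrow\P^\uparrow$ as $y'\downarrow 0$. This is a legitimate alternative, and the scaling identity $f_c(cx)=f_1(x)$ and the boundedness of $f_1$ on $[1,\infty)$ that you extract are correct and useful. But note what this trade buys and costs: your route goes through finite-dimensional distributions and thus needs a separate tightness argument (which you sketch but which the paper's direct method avoids, since it works from the start with an arbitrary continuous $F$ that only depends on the restriction to $[-K,K]$), and more substantially it imports the entrance-law continuity of $\P_y^\uparrow$ at $0^+$, a nontrivial fluctuation-theoretic fact that lies outside the paper's toolbox, whereas the paper sidesteps it precisely by choosing the stopping time $\sigma_\varepsilon$, at which the Markov refresh is exactly to $\P^\uparrow$ rather than to an entrance law $\P_y^\uparrow$. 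If you want to stay within the paper's self-contained framework, replacing the fixed-time conditioning by conditioning at $\sigma_\varepsilon$ via Lemma \ref{levy renormalise} is the key move you are missing.
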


To prove the second point, we need the following lemma.

\begin{lem}\label{levy renormalise}
We fix $\varepsilon>0$ and we define 
$$
\sigma_\varepsilon=\sigma_\varepsilon(V^\uparrow)=\inf\left\{t>0/V^\uparrow(t)-\uu{V}^\uparrow(t)=0 \textrm{ and } \exists s<t, V^\uparrow(s)-\uu{V}^\uparrow(s)>\varepsilon\right\}.
$$
Then the process $\left(V^{\uparrow}_{\sigma_\varepsilon}(t),t\geq0\right)=\left(V^\uparrow(\sigma_\varepsilon+t)-V^\uparrow(\sigma_\varepsilon),t\geq0\right)$ is independent of the process $\left(V^\uparrow(t),0\leq t\leq \sigma_\varepsilon\right)$ and its law is given by $\P^\uparrow$.
\end{lem}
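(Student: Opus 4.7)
The plan is to exploit the regenerative structure of $V^\uparrow$ at the times where it meets its future infimum $\uu{V}^\uparrow$, by viewing the excursions of $V^\uparrow-\uu{V}^\uparrow$ away from $0$ as a Poisson point process. The key observation is that although $\sigma_\varepsilon$ is not a stopping time for the natural filtration of $V^\uparrow$ (it depends on the whole trajectory through $\uu{V}^\uparrow$), it is a genuine stopping time for the filtration generated by this excursion point process.

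First I would set up the excursion framework. The zero set $\mathcal Z=\{t\ge 0 : V^\uparrow(t)=\uu{V}^\uparrow(t)\}$ is a regenerative set; the future infimum $\uu{V}^\uparrow$ is a subordinator when read against the local time of $\mathcal Z$; and the excursions of $V^\uparrow-\uu{V}^\uparrow$ above $0$ form a Poisson point process under some excursion measure $n$. Since excursions of height larger than $\varepsilon$ carry positive $n$-mass, almost surely a first such excursion appears at a finite local time, and $\sigma_\varepsilon$ is precisely its right endpoint; in particular $\sigma_\varepsilon\in\mathcal Z$, so $V^\uparrow(\sigma_\varepsilon)=\uu{V}^\uparrow(\sigma_\varepsilon)$ and the shifted process $V^\uparrow_{\sigma_\varepsilon}$ starts from $0$.

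Next I would invoke the strong Markov property of Poisson point processes applied to the stopping rule \emph{``first excursion of height $>\varepsilon$''}: the excursions up to and including this distinguished one are independent of those coming afterwards, and the latter form a Poisson point process with the same intensity $n$. Combined with the independence of the increments of the subordinator $\uu{V}^\uparrow$, this gives the independence of $(V^\uparrow(t),\,0\le t\le\sigma_\varepsilon)$ and $V^\uparrow_{\sigma_\varepsilon}$. To identify the law of $V^\uparrow_{\sigma_\varepsilon}$ I would check that its future infimum has the same law as $\uu{V}^\uparrow$ and that its excursions above this future infimum form a Poisson point process with characteristic measure $n$; the uniqueness of this excursion decomposition then forces $V^\uparrow_{\sigma_\varepsilon}$ to be distributed according to $\P^\uparrow$.

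The main subtlety lies in the change of filtration: one must work in the filtration generated by the excursion point process of $V^\uparrow-\uu{V}^\uparrow$, where $\sigma_\varepsilon$ becomes a bona fide stopping time, and then transfer the resulting independence back to joint laws of path functionals of $V^\uparrow$. An alternative, possibly more elementary in the stable setting, would be to start from Bertoin's construction of $V^\uparrow$ from the excursions of $V$ above its running infimum recalled in Section \ref{deflevypositif}: the building blocks are then i.i.d., and the splitting at $\sigma_\varepsilon$ becomes transparent.
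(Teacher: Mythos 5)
Your plan and the paper's proof share the same guiding observation (the regenerative structure of $V^\uparrow$ at the times it meets its future infimum, and the fact that $\sigma_\varepsilon$ is a "stopping rule" for that structure even though it is not a stopping time in the natural filtration of $V^\uparrow$), but the paper executes it by a much more economical device. By Theorem 28 of \cite{d:ftlp} one may assume $V^\uparrow=\mathcal R_{\overline V-V}+V(d(\cdot))$, i.e.\ $V^\uparrow$ is a measurable path functional of the L\'evy process $V$ built from the time-reversed excursions of $V$ below its running supremum. Under this identification the contact set $\{V^\uparrow=\uu{V}^\uparrow\}$ is exactly $\{\overline V-V=0\}$, so $\sigma_\varepsilon$ rewrites as $\inf\{t>0:\overline V(t)-V(t)=0 \textrm{ and } \exists\, s<t,\ \overline V(s)-V(s)>\varepsilon\}$, a bona fide stopping time for $V$. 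The strong Markov property of $V$ at $\sigma_\varepsilon$ then gives at once the desired independence and the fact that $V_{\sigma_\varepsilon}$ is a fresh copy of $V$; noting that $V^\uparrow_{\sigma_\varepsilon}=\mathcal R_{\overline V_{\sigma_\varepsilon}-V_{\sigma_\varepsilon}}+V_{\sigma_\varepsilon}(d(\cdot))$ and invoking Theorem 28 again identifies its law as $\P^\uparrow$. No excursion theory for $V^\uparrow-\uu{V}^\uparrow$ and no reconstruction argument are needed.

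Your main route has two gaps that you gloss over. First, the fact that the excursions of $V^\uparrow-\uu{V}^\uparrow$ away from $0$ form a Poisson point process, and that the process regenerates at their right endpoints, is not a free fact about $V^\uparrow$: the most direct way to establish it is precisely through the representation $V^\uparrow\stackrel{\cl}{=}\mathcal R_{\overline V-V}+V(d(\cdot))$, under which $V^\uparrow-\uu{V}^\uparrow$ equals $\mathcal R_{\overline V-V}$, i.e.\ the time-reversed excursions of $\overline V-V$. So the ``intrinsic'' plan does not avoid the paper's decomposition; it repackages it with extra PPP machinery on top. Second, the final step ``uniqueness of the excursion decomposition forces $V^\uparrow_{\sigma_\varepsilon}\sim\P^\uparrow$'' is a genuine reconstruction statement that must be argued: you have to show that the law of the process is determined by the pair (law of the future infimum, characteristic measure of the excursions above it) and that the shifted process carries the same pair. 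The paper never needs this because the law identification comes directly from applying Theorem 28 to $V_{\sigma_\varepsilon}$. Finally, the alternative you sketch at the end points to the \emph{wrong} construction: Section~\ref{deflevypositif} recalls the time-change construction via $\alpha^+_t$ and the local time $\ell$, under which $\sigma_\varepsilon$ is not transparently a stopping time for $V$; the construction that actually makes it one is the reversed-excursion identity from Theorem 28 of \cite{d:ftlp}, which is what the paper uses.
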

\begin{proof}

Thanks to Theorem 28 of \cite{d:ftlp}, the law of the process $\mathcal{R}_{\overline{V}-V}+V(d(\cdot))$ is $\P^\uparrow$ (where $\mathcal{R}$ and $d$ are defined like page \pageref{procr} but using the process $\overline{V}-V$). It is enough to prove the result in the case where $V^\uparrow=\mathcal{R}_{\overline{V}-V}+V(d(\cdot))$. But in this case, $\sigma_\varepsilon$ can be written as
$$
\sigma_\varepsilon=\inf\left\{t>0/\overline{V}(t)-V(t)=0 \textrm{ and } \exists s<t, \overline{V}(s)-V(s)>\varepsilon\right\}
$$
then $\sigma_\varepsilon$ is a stopping time for $V$. 

Thus the process $\left(V_{\sigma_\varepsilon}(t),t\geq0\right)=\left(V(\sigma_\varepsilon+t)-V(\sigma_\varepsilon),t\geq0\right)$ is a Lévy process with law $\P$ independent of $\left(V(t),0\leq t\leq \sigma_\varepsilon\right)$. We can check that  $V^{\uparrow}_{\sigma_\varepsilon}(t)=\mathcal{R}_{\overline{V}_{\sigma_\varepsilon}-V_{\sigma_\varepsilon}}(t)+V_{\sigma_\varepsilon}(d(t))$, this proves the Lemma.
\end{proof}

\begin{proof}[Proof of Proposition \ref{convergence}]

$(1)$ Thanks to Proposition \ref{vallee}, the laws of $\underset{\leftarrow}{\P}^{c}$ and $\hat{\P}^{\uparrow,c}$ are equal. When $c\rightarrow \infty$, $\hat{\P}^{\uparrow}$-a.s., $\hat{m}^\uparrow_c$ tends to infinity, then $\underset{\leftarrow}{\P}^{c}=\hat{\P}^{\uparrow,c}$ converges to $\hat{\P}^{\uparrow}$.

\vspace{.4cm}

\noindent(2) Now we prove that for a continuous bounded function $F$ in the Skorohod topology such that $F(\omega)=F(\omega\ind_{[-K,K]})$, we have
\[
\lim_{c\rightarrow \infty} \Ed\left[ F(\omega) \right]=\E^\uparrow\left[ F(\omega) \right].
\]

As $\P$-a.s., $\lim_{c\rightarrow\infty}\tau_c(\Vd)=\infty$, so $\lim_{c\rightarrow\infty} \underset{\rightarrow}{\P}^c(K\leq \tau_c(\omega))=1$ and thus
$$\lim_{c\rightarrow \infty} \Ed\left[ F(\omega) \right]=\lim_{c\rightarrow\infty}\Ed\left[ F(\omega), K\leq \tau_c(\omega) \right].$$
We first use the stability of $V$. Thanks to Remark \ref{stabmc},
\begin{align*}
\Ed\left[ F(\omega), K\leq \tau_c(\omega) \right]&= \Ed[1]\left[ F\left(  c\,\omega(c^{-\alpha}\cdot) \right) ,K\leq c^{\alpha} \tau_1(\omega)  \right],\\
&= \E^\uparrow\left[ F\left( c\,\omega(c^{-\alpha}\cdot)\right)f_1\left(\omega(\tau_1)\right) ,K\leq c^{\alpha}\tau_1(\omega)  \right]
\end{align*}
where we use the result (2) of Proposition \ref{vallee}, for the second equality. 

We now prove the asymptotic independence of $F\left( c\,\omega(c^{-\alpha}\cdot)\right)$ and of $f_1\left(\omega(\tau_1)\right)$ under $\P^\uparrow$. Then we express these two terms with respect to the processes $(\omega(t),0\leq t\leq \sigma_\varepsilon)$ and $(\omega(t),\sigma_\varepsilon\leq t)$ where, for $\varepsilon>0$, we recall that:
$$
\sigma_\varepsilon=\sigma_\varepsilon(\omega)=\inf\left\{t>0/\omega(t)-\uu{\omega}(t)  =0 \textrm{ and } \exists s<t, \omega(s)-\uu{\omega}(s)>\varepsilon\right\}.
$$

\begin{center}
\begin{figure}[ht]
\caption{The process $V^\uparrow$}

\label{vfleche}

\scalebox{1} 
{
\begin{pspicture}(0,-2.015)(14.599063,2.015)
\psline[linewidth=0.01cm,arrowsize=0.05291667cm 2.0,arrowlength=1.4,arrowinset=0.4]{<-}(0.7,2.01)(0.7,-2.01)
\psline[linewidth=0.01cm,arrowsize=0.05291667cm 2.0,arrowlength=1.4,arrowinset=0.4]{->}(0.0,-1.09)(11.16,-1.09)
\psline[linewidth=0.024cm](3.642,-0.77)(2.802,-0.77)
\psline[linewidth=0.02](3.622,-0.75)(3.482,-0.21)(3.362,-0.59)
\psline[linewidth=0.02](3.362,-0.31)(3.222,0.11)(3.142,-0.13)(3.102,0.03)(2.922,-0.45)(2.822,-0.23)
\psline[linewidth=0.024cm](2.822,-0.91)(1.202,-0.91)
\psline[linewidth=0.02](2.822,-0.91)(2.682,-0.61)(2.602,-0.75)(2.422,-0.37)
\psline[linewidth=0.02](2.422,-0.61)(2.322,-0.75)(2.122,-0.27)(1.98,-0.57)(1.9,-0.37)
\psline[linewidth=0.02](1.88,-0.61)(1.782,-0.39)(1.602,-0.67)(1.542,-0.51)(1.302,-0.79)(1.222,-0.63)
\psline[linewidth=0.024cm](6.86,-0.25)(5.08,-0.25)
\psline[linewidth=0.02](6.862,-0.25)(6.662,0.49)(6.522,0.05)(6.422,0.33)
\psline[linewidth=0.02](6.422,0.55)(6.262,0.25)(6.182,0.47)(5.942,0.01)(5.882,0.21)
\psline[linewidth=0.02](5.902,0.39)(5.822,0.63)(5.582,0.13)(5.502,0.37)(5.322,0.01)(5.262,0.21)(5.082,-0.11)
\psline[linewidth=0.024cm](5.082,-0.45)(3.622,-0.45)
\psline[linewidth=0.02](5.082,-0.43)(4.9,-0.03)(4.78,-0.23)
\psline[linewidth=0.02](4.762,0.15)(4.642,0.43)(4.502,0.15)(4.4,0.41)(4.34,0.25)
\psline[linewidth=0.02](4.3,0.55)(4.18,0.83)(3.96,0.29)(3.86,0.45)(3.702,0.09)(3.622,0.23)
\psline[linewidth=0.024](1.222,-1.11)(1.082,-0.83)(1.022,-0.93)(0.902,-0.71)
\psline[linewidth=0.024](0.922,-1.03)(0.842,-0.89)(0.702,-1.09)
\psline[linewidth=0.024cm](1.2,-1.09)(0.702,-1.09)
\psline[linewidth=0.02](6.88,0.15)(7.08,0.53)(7.2,0.37)(7.36,0.67)
\psline[linewidth=0.02](7.36,0.95)(7.52,1.29)(7.64,1.13)(7.74,1.37)
\psline[linewidth=0.02](7.74,1.51)(7.96,1.05)(8.08,1.25)(8.32,0.79)(8.42,0.93)
\psline[linewidth=0.02](8.44,0.59)(8.68,0.21)(8.78,0.35)(8.98,0.01)
\psline[linewidth=0.024cm](9.0,-0.01)(6.9,-0.01)
\psline[linewidth=0.016cm,linestyle=dashed,dash=0.16cm 0.16cm,arrowsize=0.05291667cm 2.0,arrowlength=1.4,arrowinset=0.4]{<->}(4.12,0.65)(4.12,-0.47)
\usefont{T1}{ptm}{m}{n}
\rput(4.41375,-0.125){\tiny $\varepsilon$}
\psline[linewidth=0.016cm,linestyle=dashed,dash=0.16cm 0.16cm](5.08,1.17)(5.08,-1.59)
\psline[linewidth=0.016cm,linestyle=dashed,dash=0.16cm 0.16cm](0.58,0.99)(9.66,0.97)
\psline[linewidth=0.016cm,linestyle=dashed,dash=0.16cm 0.16cm](7.34,1.69)(7.38,-1.69)
\usefont{T1}{ptm}{m}{n}
\rput(5.40375,-1.265){\tiny $\sigma_\varepsilon$}
\usefont{T1}{ptm}{m}{n}
\rput(7.70375,-1.365){\tiny $\tau^\uparrow_1$}
\psline[linewidth=0.016cm,linestyle=dashed,dash=0.16cm 0.16cm](5.08,-0.45)(9.78,-0.45)
\psline[linewidth=0.016cm,linestyle=dashed,dash=0.16cm 0.16cm,arrowsize=0.05291667cm 2.0,arrowlength=1.4,arrowinset=0.4]{<->}(9.1,0.97)(9.14,-0.45)
\psline[linewidth=0.016cm,linestyle=dashed,dash=0.16cm 0.16cm,arrowsize=0.05291667cm 2.0,arrowlength=1.4,arrowinset=0.4]{<->}(9.14,-0.47)(9.14,-1.09)
\usefont{T1}{ptm}{m}{n}
\rput(9.68375,-0.785){\tiny $V^\uparrow(\sigma_\varepsilon)$}
\usefont{T1}{ptm}{m}{n}
\rput(10.25375,0.255){\tiny $V^{\uparrow}_{\sigma_\varepsilon} ({\tau^\uparrow_{1-V^\uparrow(\sigma_\varepsilon)}})$}
\end{pspicture} 
}

\end{figure}
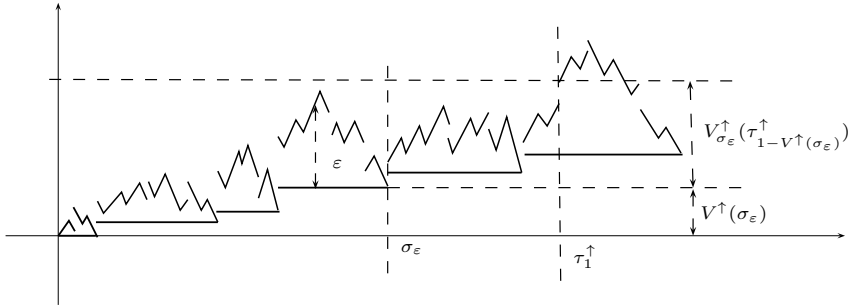
\end{center}

We fix $\varepsilon>0$, we remark that $\lim_{c\rightarrow +\infty} \P^\uparrow\left( K \leq c^{\alpha} \tau_1 \right)=\lim_{c\rightarrow +\infty} \P^\uparrow\left( K\leq c^{\alpha}\sigma_\varepsilon  \right)=1$. Then we can replace $K \leq c^{\alpha} \tau_1$ by $K\leq c^{\alpha}\sigma_\varepsilon$ in the expectation. 

Moreover, $\P^\uparrow$-a.s., $\lim_{\varepsilon\rightarrow 0}\sigma_\varepsilon=0$. Indeed, if there exists $\delta>0$ such that for all $\varepsilon>0$, $\sigma_\varepsilon>\delta$, then the excursion of $\omega-\uu{\omega}$ containing time $\delta$ is the first one. And this is $\P^\uparrow$-a.s. impossible because 0 is regular for the process $V$. Thus $\lim_{\varepsilon \rightarrow 0} \P^\uparrow\left( \sigma_\varepsilon < \tau_1\right)=1$. By conditioning with respect to $\left( \omega(t), 0\leq t \leq \sigma_\varepsilon\right)$ and using that $\{ \sigma_\varepsilon<\tau_1^\uparrow \}=\{\forall t\leq \sigma_\varepsilon, V_t^\uparrow <1\}$ is measurable with respect to  $\left( \omega(t), 0\leq t \leq \sigma_\varepsilon\right)$, we get: 

\begin{flushleft}
\begin{equation}\label{hypotheses}
 \E^\uparrow\left[ F\left(  c\,\omega(c^{-\alpha}\cdot)\right)f_1\left(\omega(\tau_1)\right) ,  c^{-\alpha}K\leq \sigma_\varepsilon < \tau_1 \right] = \hspace{5cm}
 \end{equation}
\end{flushleft}
 
\begin{flushright}
 \begin{equation*}
  \hspace{3cm}\E^\uparrow\left[ F\left(  c\,\omega(c^{-\alpha}\cdot)\right) \, \ind_{c^{-\alpha}K\leq\sigma_\varepsilon< \tau_1} \, \E^\uparrow\left[ f_1\left(\omega(\tau_1)\right) \Big | \left(\omega(t),0\leq t \leq \sigma_\varepsilon\right)\right] \right].
  \end{equation*}
\end{flushright}

We compute the limit of the conditional expectation when $\varepsilon$ converges to 0. When $\sigma_\varepsilon < \tau_1$, we can write $\omega(\tau_1) = \omega(\sigma_\varepsilon) + \omega_{\sigma_\varepsilon} ({\tau_{1-\omega(\sigma_\varepsilon)}})$ where $\omega_{\sigma_\varepsilon}$ is the path starting from $\sigma_\varepsilon$ : $\left(\omega_{\sigma_\varepsilon}(t),t\geq0\right)=\left(\omega(\sigma_\varepsilon+t)-\omega(\sigma_\varepsilon),t\geq0\right)$, see Figure \ref{vfleche}. Thanks to Lemma \ref{levy renormalise},

\begin{align*}
\E^\uparrow\left[ f_1\left(\omega(\tau_1)\right) \Big | \left( \omega(t),0\leq t \leq \sigma_\varepsilon \right) \right] = \chi(\omega(\sigma_\varepsilon))
\hbox{ where } \chi(x) = \E^\uparrow\left[f_1\left(  x + \omega(\tau_{1-x})  \right) \right].
\end{align*}
Remark that $\P^\uparrow$-a.s, $\lim_{\varepsilon\rightarrow 0}\omega(\sigma_\varepsilon)=0$. Prove that $\P^\uparrow$-a.s., $\lim_{y\rightarrow 1-}\omega(\tau_y)=\omega(\tau_1)$.  $\P^\uparrow$-a.s., $\omega$ is continuous at its local extrema (see (2) of Remark \ref{rem 1ere prop}). Thus, thanks to Remark \ref{continuite ext locaux}, $\P^\uparrow$-a.s., $\sup_{[0,\tau_1)}\omega$ is attained in a point $x_0\in[0,\tau_1]$ and  $\omega(x_0)\leq 1$.

If $x_0=\tau_1$, then $\omega(\tau_1)=1$ and $\lim_{y\rightarrow 1-}1-\omega(\tau_y) \leq \lim_{y\rightarrow 1-}1-y =0$. And if $x_0<\tau_1$, then $\omega(x_0)<1$ and then, for $1-y$ small enough, $\tau_1=\tau_y$, thereby $\lim_{y\rightarrow 1-}\omega(\tau_y)=\omega(\tau_1)$.

Moreover, in the case where $V$ is a stable process with index $\alpha\in[1,2)$, an explicit expression of $\cu$ is given in Example 7 of \cite{dk:oulp}:
$$\cu\left( [  0, x )\right) = \frac{x^{\alpha\rho}}{\Gamma(\alpha \rho+1)} \hbox{ where }\rho=\P(V(t)\geq0)\hbox{ does not depend on $t$}.$$
Thus using the expression of $f_c$ given in Proposition \ref{vallee}, we get: 
\begin{equation}\label{expressionf1}
f_1(x)=\frac{\Gamma(\alpha\rho+1)}{x^{\alpha\rho}\mathcal{N}(\tau_1(\omega) < \infty)},
\end{equation}
then $f_1$ is continuous on  $\R_+^*$ and
\begin{eqnarray}\label{conv ki}
 \lim_{\varepsilon\rightarrow 0}\E^\uparrow\left[ f_1\left(\omega(\tau_1)\right) \Big | \left( \omega(t),0\leq t \leq \sigma_\varepsilon\right) \right]=\nonumber\lim_{\varepsilon\rightarrow 0} \chi(\omega(\sigma_\varepsilon)) &=&
 \E^\uparrow\left[ f_1\left(\omega(\tau_{1}) \right) \right],  \\
 &=& 1 \textrm{, $\P^\uparrow$-a.s.},\label{limite psi}
\end{eqnarray}
because $f_1\left(\omega(\tau_{1}) \right)$ is the density of $\underset{\rightarrow}{\P}^1$ with respect to $\P^{\uparrow,1}$.

Using the stability of the process $V^\uparrow$ given in Lemma \ref{stabiliteuparrow}, we get that for all $c\geq0$,
$$ \E^\uparrow\left[ F\left(  c\,\omega(c^{-\alpha}\cdot)\right)\right]= \E^\uparrow\left[ F\left(\omega\right)\right]=\E\left[ F\left(V^\uparrow \right)\right].$$
So to prove the proposition, it is enough to show that 

$$ \lim_{\varepsilon\rightarrow 0} \lim_{c\rightarrow \infty}\left|   \E^\uparrow\left[ F\left(  c\,\omega(c^{-\alpha}\cdot) \right) \, \ind_{c^{-\alpha}K\leq\sigma_\varepsilon}  \chi\left(\omega(\sigma_\varepsilon) \right)\ind_{\sigma_\varepsilon < \tau_1}\right]    -    \E^\uparrow\left[ F\left(  c\,\omega(c^{-\alpha}\cdot)\right)\right]   \right|=0.$$

We introduce $  \E^\uparrow\left[ F\left(  c\,\omega(c^{-\alpha}\cdot) \right)\ind_{c^{-\alpha}K\leq\sigma_\varepsilon} \right]  $, and we get the following upper bound :

\begin{flushleft}
$\displaystyle \left|   \E^\uparrow\left[ F\left(  c\,\omega(c^{-\alpha}\cdot) \right) \, \ind_{c^{-\alpha}K\leq\sigma_\varepsilon}  \chi\left(\omega(\sigma_\varepsilon) \right)\ind_{\sigma_\varepsilon < \tau_1}\right]    -    \E^\uparrow\left[ F\left(  c\,\omega(c^{-\alpha}\cdot)\right)\right]   \right| \leq$
\end{flushleft}

\begin{flushright}
$\displaystyle           \E^\uparrow\left[\left|  F\left(  c\,\omega(c^{-\alpha}\cdot) \right) \, \ind_{c^{-\alpha}K\leq\sigma_\varepsilon}  \left(\chi\left(\omega(\sigma_\varepsilon) \right)\ind_{\sigma_\varepsilon < \tau_1}-1\right)\right|\right]      +      \E^\uparrow\left[\left| F\left(  c\,\omega(c^{-\alpha}\cdot) \right)\left(\ind_{c^{-\alpha}K\leq\sigma_\varepsilon}-1\right)\right|  \right]                  .   $
\end{flushright}
For $\varepsilon$ fixed, we have $\lim_{c\rightarrow +\infty} \P^\uparrow\left(c^{-\alpha} K\leq \sigma_\varepsilon  \right)=1$. Then for the second part, we get :

$$\displaystyle  \lim_{c\rightarrow \infty} \E^\uparrow\left[  \left| F\left(  c\,\omega(c^{-\alpha}\cdot) \right)\left( \ind_{c^{-\alpha}K\leq\sigma_\varepsilon}  -    1\right) \right|  \right]            \leq  \left\| F \right\|_\infty  \lim_{c\rightarrow \infty}   \E^\uparrow\left[ \left|  \ind_{c^{-\alpha}K\leq\sigma_\varepsilon}  -    1 \right| \right]=0,$$
and for the first part, we have : 

$$\displaystyle \underset{c\rightarrow \infty}{\overline{\lim} }  \E^\uparrow\left[\left| F\left(  c\,\omega(c^{-\alpha}\cdot) \right) \, \ind_{c^{-\alpha}K\leq\sigma_\varepsilon}  \left(1-\chi\left(\omega(\sigma_\varepsilon) \right)\ind_{\sigma_\varepsilon < \tau_1}\right)\right|\right]  \leq   \displaystyle   \left\| F \right\|_\infty   \E^\uparrow\left[\left|  1- \chi\left(\omega(\sigma_\varepsilon) \right)\ind_{\sigma_\varepsilon < \tau_1} \right|  \right] . $$

Let $\varepsilon$ tends to 0, we have $\P^\uparrow$-a.s, $\lim_{\varepsilon\rightarrow 0}\sigma_\varepsilon=0$ and we use \reff{conv ki} to get the convergence : 

$$ \lim_{\varepsilon\rightarrow 0} \lim_{c\rightarrow \infty}   \E^\uparrow\left[\left| F\left(  c\,\omega(c^{-\alpha}\cdot) \right) \, \ind_{c^{-\alpha}K\leq\sigma_\varepsilon}  \left(1-\chi\left(\omega(\sigma_\varepsilon) \right)\ind_{\sigma_\varepsilon < \tau_1}\right)\right|\right]   =0.$$

This completes the proof.

\end{proof}

\section{Diffusion in a random environment}\label{sec2}
\subsection{Asymptotic behavior of a diffusion in a stable environment}
We suppose that the diffusion $X$ in the random environment $V$ is defined on a probability space $\left(\Omega,\mathcal{F},\mathcal{P}\right)$ and we write, as previously, $\Pp$ the law of $V$. But we take a general case for the environment: \emph{ in all this section, we only suppose that $V$ is an $\alpha$-stable càd-làg process on $\R$ ($\alpha>0$) and not necessary a Lévy process}. We will prove that the asymptotic behavior of $L_X$, the local time of the diffusion, depends only on the environment.

We first give some properties of $X$. We can prove (see for example \cite{s:swvsc}) that there exists a Brownian motion $B$ independent of $V$ such that $\mathcal{P}$-a.s.,
\begin{equation}\label{eqx}
 \forall t>0,\quad X(V,t)=S_V^{-1}\left(B\left(T_V^{-1}(t)\right)\right)
\end{equation}
where
\begin{equation*}
	\forall x \in \mathbb{R},\quad S_V(x):=\int_0^xe^{V(y)}\ud y \label{eqS}
\end{equation*}
and
\begin{equation*}
\forall t\geq 0,\quad T_{V}(t):=\int_0^te^{-2V(S_V^{-1}(B(s)))}\ud s.\label{eqT}
\end{equation*}
In order to simplify the notations, we write $S$ and $T$ respectively for $S_V$ and $T_{V}$, when there is no confusion.
From Formula $(\ref{eqx})$ and the occupation time Formula \reff{toccu}, we deduce an expression of the local time process of $X$ using the local time process $L_B$ of the Brownian motion $B$. For all $x\in\R$ and all $t\geq0$,
\begin{eqnarray}
	L_X(t,x)=e^{-V(x)}L_B(T^{-1}(t),S(x)).\label{eqLx} \label{tsc}
\end{eqnarray}

To study the asymptotic behavior of the diffusion, we first work in a quenched environment.

As Brox in the Brownian environment case  (see \cite{b:oddpwm}), we introduce the process $X_c(V,t):=X(c V,t)$. For all $x\in\R$, we write $V^c(x):=c^{-1}V(c^\alpha x)$. There is a direct link between the law of $X_{c}$ and the law of $X$:
\begin{lem} \label{egx}
For every fixed $c$ , conditionally on $V$, the processes $\displaystyle\left( c^{-\alpha}X(V,c^{2\alpha}t); t\geq 0\right)$ and $\displaystyle\left( X_c(V^c,t); t\geq 0 \right)$ have the same law.
\end{lem}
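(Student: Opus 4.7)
The plan is to unfold both processes through the explicit representation \eqref{eqx} and track how the $\alpha$-stable scaling of $V$ and the standard Brownian scaling of $B$ combine. Conditionally on $V$ the randomness in $X(V,\cdot)$ comes only from $B$, so the claim reduces to an identity in law between two functionals of $B$.

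First I would compute $S_{cV^c}$ using the substitution $u=c^{\alpha}y$:
\[
S_{cV^c}(x) = \int_0^x e^{(cV^c)(y)}\,\ud y = \int_0^x e^{V(c^\alpha y)}\,\ud y = c^{-\alpha}\int_0^{c^\alpha x} e^{V(u)}\,\ud u = c^{-\alpha}\,S_V(c^\alpha x),
\]
hence $S_{cV^c}^{-1}(z) = c^{-\alpha}\,S_V^{-1}(c^\alpha z)$. This absorbs the spatial scaling factor $c^{-\alpha}$ on the left-hand side of the asserted equality provided that, inside the square bracket of $S_V^{-1}$, one gets $c^{\alpha}$ times a Brownian motion (rather than a Brownian motion) evaluated at an appropriate time change.

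Next I would introduce the Brownian rescaling $\widetilde B(s):=c^{-\alpha}B(c^{2\alpha}s)$; by standard scaling, $\widetilde B$ is a Brownian motion and, being a measurable functional of $B$, it is independent of $V$ with the same joint law as $B$. Writing $B(r)=c^{\alpha}\widetilde B(c^{-2\alpha}r)$ and performing the substitution $u=c^{2\alpha}r$ in the integral defining $T_V$, I would obtain
\[
c^{-2\alpha}T_V(c^{2\alpha}s) = \int_0^{s} e^{-2V\bigl(S_V^{-1}(c^\alpha \widetilde B(r))\bigr)}\,\ud r.
\]
Using the identity $c^\alpha S_{cV^c}^{-1}=S_V^{-1}(c^\alpha\,\cdot)$ established above, the right-hand side is exactly $T_{cV^c}(s)$ but built from $\widetilde B$ in place of $B$; call this quantity $\widetilde T(s)$. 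Inverting, $\widetilde T^{-1}(t)=c^{-2\alpha}T_V^{-1}(c^{2\alpha}t)$.

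Putting the pieces together,
\[
c^{-\alpha}X(V,c^{2\alpha}t) = c^{-\alpha}S_V^{-1}\!\bigl(B(T_V^{-1}(c^{2\alpha}t))\bigr) = c^{-\alpha}S_V^{-1}\!\bigl(c^\alpha \widetilde B(\widetilde T^{-1}(t))\bigr) = S_{cV^c}^{-1}\!\bigl(\widetilde B(\widetilde T^{-1}(t))\bigr),
\]
which is the definition of $X_c(V^c,t)=X(cV^c,t)$ with $B$ replaced by $\widetilde B$. Since $\widetilde B\stackrel{\cl}{=}B$ and both are independent of $V$, the two processes agree in law conditionally on $V$.

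The argument is essentially bookkeeping; there is no genuine obstacle beyond being careful with the two different changes of variable (the spatial one $u=c^\alpha y$ for $S$ and the temporal one $u=c^{2\alpha}r$ for $T$) and checking that the exponent $2\alpha$ in the time scaling is exactly what Brownian scaling consumes to produce the factor $c^{\alpha}$ needed to cancel the spatial dilation in $S_V^{-1}$.
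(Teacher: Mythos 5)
Your proof is correct and takes the same route the paper has in mind: the paper's own proof is the one-line remark ``We can easily check the result with Formula~(\ref{eqx})'', and your argument is precisely the detailed verification of that claim via the scaling identities $S_{cV^c}^{-1}(z)=c^{-\alpha}S_V^{-1}(c^\alpha z)$, $\widetilde T^{-1}(t)=c^{-2\alpha}T_V^{-1}(c^{2\alpha}t)$, and the Brownian scaling $\widetilde B(s)=c^{-\alpha}B(c^{2\alpha}s)$. The bookkeeping, including the observation that $(cV^c)\bigl(S_{cV^c}^{-1}(\cdot)\bigr)=V\bigl(S_V^{-1}(c^\alpha\cdot)\bigr)$ which is needed to recognize $\widetilde T$ as $T_{cV^c}$ built from $\widetilde B$, is accurate.
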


\begin{proof}
We can easily check the result with Formula $(\ref{eqx})$.
\end{proof}

With the definition of the local time (Formula \reff{toccu}), we get an analogous relation for the local time processes:
\begin{cor}\label{eglx}
For every fixed $c$, conditionally on $V$,\\
the processes $\displaystyle\left( L_{X_c(V^c,\cdot)}(t,x) ;t\geq0, x\in \R\right)$ and $\displaystyle\left( c^{-\alpha} L_{X(V,\cdot)}(c^{2\alpha}t,c^\alpha x); t\geq0, x\in \R\right)$ have the same law.
\end{cor}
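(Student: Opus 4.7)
The plan is to derive this from Lemma \ref{egx} by combining it with the occupation time formula \reff{toccu} and two elementary changes of variables; there is no serious obstacle, the statement is essentially a scaling bookkeeping exercise once Lemma \ref{egx} is in hand.

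First I would set $Y(t):=c^{-\alpha}X(V,c^{2\alpha}t)$. Since the map from a continuous path to its local time (defined by \reff{toccu}) is a measurable functional of the trajectory, Lemma \ref{egx} immediately implies that, conditionally on $V$, the local time process $L_Y$ has the same law as $L_{X_c(V^c,\cdot)}$. So it suffices to compute $L_Y$ explicitly in terms of $L_{X(V,\cdot)}$ and recognize the right-hand side of the claim.

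The computation is a direct application of the occupation time formula. For any bounded Borel $f$ and any $t\geq 0$,
\begin{equation*}
\int_0^t f(Y(s))\,\ud s
=\int_0^t f\!\left(c^{-\alpha}X(V,c^{2\alpha}s)\right)\ud s
=c^{-2\alpha}\int_0^{c^{2\alpha}t} f\!\left(c^{-\alpha}X(V,u)\right)\ud u,
\end{equation*}
where I have substituted $u=c^{2\alpha}s$. Applying \reff{toccu} to $X(V,\cdot)$ with the function $x\mapsto f(c^{-\alpha}x)$ and then substituting $y=c^{-\alpha}x$ yields
\begin{equation*}
\int_0^t f(Y(s))\,\ud s
=c^{-2\alpha}\int_{\R} f(c^{-\alpha}x)\,L_{X(V,\cdot)}(c^{2\alpha}t,x)\,\ud x
=\int_{\R} f(y)\,c^{-\alpha}L_{X(V,\cdot)}(c^{2\alpha}t,c^{\alpha}y)\,\ud y.
\end{equation*}

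By the uniqueness part of the occupation time formula (the local time is the unique a.s.\ continuous-in-$t$ process satisfying \reff{toccu}), this identifies
\begin{equation*}
L_Y(t,y)=c^{-\alpha}L_{X(V,\cdot)}(c^{2\alpha}t,c^{\alpha}y)\qquad (t\geq 0,\ y\in\R),
\end{equation*}
$\mathcal P$-almost surely. Combining this pointwise identity with the equality in law of $Y$ and $X_c(V^c,\cdot)$ from Lemma \ref{egx}, and using that local times are measurable functionals of the paths, we conclude that conditionally on $V$ the two processes
$(L_{X_c(V^c,\cdot)}(t,x);t\geq 0,x\in\R)$ and $(c^{-\alpha}L_{X(V,\cdot)}(c^{2\alpha}t,c^{\alpha}x);t\geq 0,x\in\R)$ have the same law, which is the stated corollary.
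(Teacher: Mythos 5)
Your proof is correct and follows exactly the approach the paper intends: the paper merely remarks that the corollary follows from the definition of local time via Lemma \ref{egx}, and your change-of-variables computation with the occupation time formula \reff{toccu} is precisely the bookkeeping that remark is abbreviating. Nothing to add.
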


We will first give a convergence result for the process $L_{X_c}$ and then we will deduce a convergence result for the process $L_{X}$. The asymptotic behavior of $L_{X_c}$ with a quenched environment can be only described with respect to the environment if this one is a "good" environment in the sense of the following definition: 
\begin{defi}
We call a good environment every path $\omega\in\mathcal{V}$ that checks
\begin{enumerate}
\item$\displaystyle\liminf_{x\rightarrow\pm\infty}\omega(x)=-\infty \textrm{ and }\limsup_{x\rightarrow\pm\infty}\omega(x)=+\infty$,
\item $\omega$ is continuous at its  local extrema and the values of $\omega$ in these points are pairwise distinct.
\end{enumerate}
Denote by $\tV$ the set of good environments.
\end{defi}

Remark that if the paths of $V$ are in  $\tV$, then the minimum $\mathfrak m_c(V)$ of the standard valley with height $c$ (see page \pageref{valstand}) is well defined.

Then we can define, for every $c>0$ and every $r>0$,
\begin{align*}
	&a_{cr}=a_{cr}(V_{\mathfrak m_c}):=\sup\left\{x\leq0/V_{\mathfrak m_c}(x)>cr\right\}\textrm{ and} \\
& b_{cr}=b_{cr}(V_{\mathfrak m_c}):=\inf\left\{x\geq0/V_{\mathfrak m_c}(x)>cr\right\}
\end{align*}
where for $x\in\R$, $(V_x(y), y\in\R)$ is the recentered environment in $x$: for all $y\in\R$,
\begin{displaymath}
	V_x(y):=V(x+y)-V(x). \label{shdipo}
\end{displaymath}
Now we can give the result in a quenched environment. We denote by $\mathcal{P}_V$ the probability measure $\mathcal{P}(\cdot|V)$.

\begin{prop}\label{cle}$ $\\
We fix a real number $r\in(0,1)$ and we choose an $\R$-valued function $h$ such that $\underset{c\rightarrow\infty}{\lim}h(c)=1$.
If $V$ is a good environment, then for all $\delta>0$,
\begin{displaymath}
\lima \mathcal{P}_V\left(\sup_{a_r\leq x\leq b_r}\left|\frac{L_{X_c}(e^{c h(c)},\mathfrak m_1+x)}{e^{c h(c)}}\frac{\int_{a_{r}}^{b_{r}}e^{-c V_{\mathfrak m_1}(y)}\ud y}{e^{-c V_{\mathfrak m_1}(x)}}-1\right|\leq \delta \right)=1
\end{displaymath}
and
\begin{displaymath}
\lima \mathcal{P}_V\left(\sup_{x\in\R\setminus[a_r,b_r]}L_{X_c}(e^{c h(c)},\mathfrak m_1+x)\leq \frac{\delta e^{c h(c)}}{\int_{a_{r}}^{b_{r}}e^{-c V_{\mathfrak m_1}(y)}\ud y} \right)=1.
\end{displaymath}
\end{prop}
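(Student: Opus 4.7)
Following Brox \cite{b:oddpwm}, the idea is to recast everything in terms of the Brownian local time via the representation \reff{tsc}. Applied to the environment $cV$ it gives
\[
L_{X_c}(t,\mathfrak m_1+x)=e^{-cV(\mathfrak m_1+x)}\,L_B\bigl(T^{-1}(t),\,S_{cV}(\mathfrak m_1+x)\bigr),
\]
with $S_{cV}(y)=\int_0^y e^{cV(z)}\ud z$. The diffusion is unchanged when a constant is added to $V$, so we may assume $V(\mathfrak m_1)=0$, which turns the prefactor into $e^{-cV_{\mathfrak m_1}(x)}$. Writing $I_c=\int_{a_r}^{b_r}e^{-cV_{\mathfrak m_1}(y)}\ud y$, the first claim of the proposition reduces to
\[
\sup_{x\in[a_r,b_r]}\Bigl|\frac{I_c}{e^{ch(c)}}\,L_B\bigl(T^{-1}(e^{ch(c)}),\,S_{cV}(\mathfrak m_1+x)\bigr)-1\Bigr|\xrightarrow[c\to\infty]{\mathcal P_V}0,
\]
and the second to an analogous $o(1)$-type upper bound for $x\notin[a_r,b_r]$.

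\textbf{Hitting $\mathfrak m_1$ quickly.} Since $V\in\tV$ and $0\in(\mathfrak p_1,\mathfrak q_1)$, the potential barrier between $0$ and $\mathfrak m_1$ in the environment $V_{\mathfrak m_1}$ has height strictly less than $1$, hence a standard Brox-type hitting-time estimate yields, with $\mathcal P_V$-probability tending to $1$, a bound $\tau^{X_c}_{\mathfrak m_1}\le e^{c(1-\eta)}$ for some $\eta=\eta(V)>0$. Thus $\tau^{X_c}_{\mathfrak m_1}=o(e^{ch(c)})$, and after this instant the remaining diffusion clock is still of order $e^{ch(c)}$ while the residual Brownian clock is of order $T^{-1}(e^{ch(c)})$.

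\textbf{Equilibration of $L_B$ and conclusion.} The key step is
\[
\sup_{x\in[a_r,b_r]}\Bigl|\frac{L_B(T^{-1}(e^{ch(c)}),\,S_{cV}(\mathfrak m_1+x))}{L_B(T^{-1}(e^{ch(c)}),\,S_{cV}(\mathfrak m_1))}-1\Bigr|\xrightarrow[c\to\infty]{\mathcal P_V}0,
\]
which I would establish from spatial continuity of Brownian local time combined with a lower bound on the number of excursions of $B$ across the interval $[S_{cV}(\mathfrak m_1+a_r),S_{cV}(\mathfrak m_1+b_r)]$ in the available Brownian time. Plugging this into the occupation time identity $e^{ch(c)}=\int_\R L_{X_c}(e^{ch(c)},y)\ud y$, together with the bound $e^{-cV_{\mathfrak m_1}(x)}<e^{-cr}$ for $x\notin[a_r,b_r]$ (which, with a crude comparison of $L_B$ on both sides, makes the contribution of the complement negligible), pins down $L_B(T^{-1}(e^{ch(c)}),S_{cV}(\mathfrak m_1))\sim e^{ch(c)}/I_c$ and delivers both assertions.

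\textbf{Main obstacle.} The core difficulty is the equilibration estimate of Step 3: the Brownian local time must be shown to be nearly constant over an interval whose length is exponentially large in $c$ (of order $e^{cr}$), in a Brownian time window of size $T^{-1}(e^{ch(c)})$. Making this quantitative requires a careful comparison between the diffusive scale $e^{ch(c)}$ and the Brownian scale, using the fact that the overwhelming part of the integrand of $T$ comes from the neighbourhood of $S_{cV}(\mathfrak m_1)$ where $V$ is near its valley minimum $0$, so that $B$ makes a diverging number of excursions across the valley interval by time $T^{-1}(e^{ch(c)})$.
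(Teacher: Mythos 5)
Your overall strategy---rewrite $L_{X_c}$ through the Brownian representation \reff{tsc}, use a Brox-type hitting-time estimate to reach $\mathfrak m_1$ quickly, equilibrate the Brownian local time over the valley interval, and read off the normalization from the occupation-time identity---is essentially the route the paper follows, by adapting Propositions 2.1, 2.5 and 3.1 of \cite{ad:llltbd}. You also correctly single out the equilibration estimate as the crux, which is the content of the Hu--Shi type bound underlying Proposition 2.1 of \cite{ad:llltbd}, and your reduction to $V(\mathfrak m_1)=0$ is legitimate since the generator is unchanged by an additive constant.

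There is, however, a genuine gap in the treatment of the second claim (the complement $\R\setminus[a_r,b_r]$). You invoke the bound $e^{-cV_{\mathfrak m_1}(x)}<e^{-cr}$ for $x\notin[a_r,b_r]$, but this is false. The points $a_r$ and $b_r$ are only the \emph{first} crossings of level $r$ by $V_{\mathfrak m_1}$ on each side of $0$; beyond them, still inside the valley $[\mathfrak p_1-\mathfrak m_1,\mathfrak q_1-\mathfrak m_1]$, the recentered potential can dip back below $r$ (the valley structure forbids a drop of size $1$, and forbids $V_{\mathfrak m_1}$ from going below $0$, but not a drop below $r$). Worse, beyond $\mathfrak p_1-\mathfrak m_1$ or $\mathfrak q_1-\mathfrak m_1$ the process $V_{\mathfrak m_1}$ can take arbitrarily negative values, so $e^{-cV_{\mathfrak m_1}(x)}$ is unbounded on $\R\setminus[a_r,b_r]$ and a pointwise potential bound gives nothing. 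What controls that region is not the size of $V_{\mathfrak m_1}$ but a dynamical confinement estimate: with $\mathcal P_V$-probability tending to one, the diffusion $X_c$ does not cross the barriers of height $\ge 1$ at $\mathfrak p_1,\mathfrak q_1$ before time $e^{ch(c)}$, and on the region inside the valley but outside $[a_r,b_r]$ one must compare $L_B$ at $S_{cV}(\mathfrak m_1+x)$ with $L_B$ at $S_{cV}(\mathfrak m_1)$ (this is what Proposition 2.5 of \cite{ad:llltbd}, to which the paper appeals, supplies). Without these ingredients, the occupation identity $e^{ch(c)}=\int_\R L_{X_c}(e^{ch(c)},y)\ud y$ cannot be used to pin down $L_B(T^{-1}(e^{ch(c)}),S_{cV}(\mathfrak m_1))\sim e^{ch(c)}/I_c$, since the complement's contribution to the integral is not a priori small, and the second assertion of the proposition is left unproved.
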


\begin{proof}
The ideas of the proof are essentially the same as in Sections 2.1 and 2.2 of \cite{ad:llltbd}. For the first one, we proceed as in Proposition 2.1 of \cite{ad:llltbd} with $[a_r,b_r]$ instead of a compact $[-K,K]$. 
The proof of the second formula is the same as the one of Proposition 2.5 of \cite{ad:llltbd} but instead of computing with the integral of the local time (see Formula (34) of \cite{ad:llltbd}), we use the supremum on $\R\setminus[a_r,b_r]$ of the same local time. In both cases, same arguments as the ones of Proposition 3.1 of \cite{ad:llltbd} used with the equivalent of Proposition 2.5 of \cite{ad:llltbd} allow to write the formula with a deterministic time instead of a random time.
\end{proof}

Denote by $m^*_c(t)$ the favorite point of $X_c$,
$$m^*_c(t)=\inf\left\{x\in\R,\ L_{X_c}(t,x)=\sup_{y\in\R}L_{X_c}(t,y)\right\}.$$
The previous proposition implies that this point is localized in the neighborhood of $\mathfrak{m}_1$:
\begin{prop}\label{cvgmc}
If $V$ is a good environment $\P$-a.s. then for any $\delta>0$ such that
\begin{equation}\label{condlim}
\lim_{\varepsilon\rightarrow0}\limsup_{c\rightarrow\infty}\mathbb{P}\left(\inf_{[a_{r},-\delta/c^\alpha]\cup[\delta/c^\alpha,b_{r}]}cV_{\mathfrak m_1}\leq\varepsilon\right)=0
\end{equation}
we have
 \begin{displaymath}
 \lima \mathcal{P}\left(\left|m^*_c(e^{ch(c)})-\mathfrak{m}_1\right|\leq \delta/c^\alpha \right)=1.
 \end{displaymath}
\end{prop}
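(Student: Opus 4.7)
The approach is to leverage Proposition \ref{cle}, which controls the entire local time profile of $X_c$ around $\mathfrak{m}_1$, and combine it with hypothesis \reff{condlim} to pin down where the maximum of this profile must lie. Since $V_{\mathfrak{m}_1}(0)=0$ is the minimum value of the recentered environment on $[a_r,b_r]$, the deterministic ratio $e^{-cV_{\mathfrak{m}_1}(x)}/\int_{a_r}^{b_r}e^{-cV_{\mathfrak{m}_1}(y)}\ud y$ attains its supremum at $x=0$, and \reff{condlim} says this ratio is, with high probability, strictly separated from its maximum outside $[-\delta/c^\alpha,\delta/c^\alpha]$. Once the local time is shown to be uniformly close to this ratio, the favorite point is forced into the shrinking neighborhood $[\mathfrak{m}_1-\delta/c^\alpha,\mathfrak{m}_1+\delta/c^\alpha]$.

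Fix $\eta>0$. First, by \reff{condlim}, I would choose $\varepsilon>0$ and then $c_0$ such that for $c\geq c_0$, $\mathbb{P}\bigl(\inf_{A_c}cV_{\mathfrak{m}_1}\leq\varepsilon\bigr)<\eta/3$, where $A_c=[a_r,-\delta/c^\alpha]\cup[\delta/c^\alpha,b_r]$. Next, choose an auxiliary constant $\delta_0\in(0,1/2)$ so small that $(1+\delta_0)e^{-\varepsilon}<1-\delta_0$. Applying Proposition \ref{cle} with this $\delta_0$ and denoting $I_c=\int_{a_r}^{b_r}e^{-cV_{\mathfrak{m}_1}(y)}\ud y$, on the intersection of the good-environment event $E_1=\{\inf_{A_c}cV_{\mathfrak{m}_1}>\varepsilon\}$ with the two events furnished by Proposition \ref{cle}, one has
\begin{align*}
L_{X_c}(e^{ch(c)},\mathfrak{m}_1)&\geq(1-\delta_0)\,e^{ch(c)}/I_c,\\
L_{X_c}(e^{ch(c)},\mathfrak{m}_1+x)&\leq(1+\delta_0)e^{-\varepsilon}\,e^{ch(c)}/I_c\quad\text{for }x\in A_c,\\
L_{X_c}(e^{ch(c)},\mathfrak{m}_1+x)&\leq\delta_0\,e^{ch(c)}/I_c\quad\text{for }x\notin[a_r,b_r].
\end{align*}
By the choice of $\delta_0$, both bounds for $x\notin[-\delta/c^\alpha,\delta/c^\alpha]$ are strictly smaller than the lower bound at $x=0$, so on this intersection $m^*_c(e^{ch(c)})\in[\mathfrak{m}_1-\delta/c^\alpha,\mathfrak{m}_1+\delta/c^\alpha]$.

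The remaining step is to pass from the quenched conclusion to the annealed probability $\mathcal{P}$. Because good environments have full $\mathbb{P}$-measure, Proposition \ref{cle} provides, for $\mathbb{P}$-a.e.\ $V$, $\mathcal{P}_V$-probabilities of the two exceptional events tending to zero; by bounded convergence the corresponding annealed probabilities vanish in the limit $c\to\infty$, while the failure of $E_1$ is directly bounded by $\eta/3$ via \reff{condlim}. Summing the three contributions yields $\mathcal{P}(|m^*_c(e^{ch(c)})-\mathfrak{m}_1|>\delta/c^\alpha)<\eta$ for $c$ large. I expect no deep new estimate to be needed; the delicate part is purely bookkeeping, namely keeping the same random denominator $I_c$ in all three inequalities so that it cancels uniformly in the comparisons, and verifying that the threshold $\delta_0(\varepsilon)$ used in Proposition \ref{cle} can indeed be fixed independently of $c$ once $\varepsilon$ has been chosen from \reff{condlim}.
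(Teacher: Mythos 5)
Your proof is correct and follows essentially the same route as the paper: combine Proposition \ref{cle} with hypothesis \eqref{condlim} to separate the local-time profile, localize the favorite point on the good event, and close with a union bound. The only cosmetic difference is the order of quantifiers --- you fix the separation level $\varepsilon$ from \eqref{condlim} first and then pick the tolerance $\delta_0$ in Proposition \ref{cle}, whereas the paper fixes the tolerance $\varepsilon$ and sets the separation threshold to $\log\frac{1+\varepsilon}{1-\varepsilon}$ --- but the underlying inequality is the same.
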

\begin{proof}
 Let $\varepsilon\in(0,1/2)$, for any $c>1$, we define two events:
 $$
 A_c:=\left\{\sup_{a_r\leq x\leq b_r}\left|\frac{L_{X_c}(e^{c h(c)},\mathfrak m_1+x)}{e^{c h(c)}}\frac{\int_{a_{r}}^{b_{r}}e^{-c V_{\mathfrak m_1}(y)}\ud y}{e^{-c V_{\mathfrak m_1}(x)}}-1\right|\leq \varepsilon \right\}
 $$
 and
$$
 B_c:=\left\{\sup_{x\in\R\setminus[a_r,b_r]}L_{X_c}(e^{c h(c)},\mathfrak m_1+x)\leq \frac{\varepsilon e^{c h(c)}}{\int_{a_{r}}^{b_{r}}e^{-c V_{\mathfrak m_1}(y)}\ud y} \right\}.
 $$
 On $A_c\cap B_c$, we have the following inequality
 $$L_{X_c}(e^{ch(c)},m^*_c(e^{ch(c)}))\geq L_{X_c}(e^{ch(c)},\mathfrak{m}_1)\geq \frac{(1-\varepsilon)e^{c h(c)}}{\int_{a_{r}}^{b_{r}}e^{-c V_{\mathfrak m_1}(y)}\ud y}.$$
Thereby $m^*_c(e^{ch(c)})\in(a_r,b_r)$ and so
 $$L_{X_c}(e^{ch(c)},m^*_c(e^{ch(c)}))\leq \frac{(1+\varepsilon)e^{c h(c)}}{\int_{a_{r}}^{b_{r}}e^{-c V_{\mathfrak m_1}(y)}\ud y}e^{-c V_{\mathfrak m_1}(m^*_c(e^{ch(c)}))}.$$
 Necessarily, 
 $$c V_{\mathfrak m_1}(m^*_c(e^{ch(c)})\leq\log\frac{1+\varepsilon}{1-\varepsilon}.$$
 Denote 
 $$
 \mathcal{E}^\varepsilon_c=\left\{V\in\tV\ ,\ \inf_{[a_r,-\delta/c^\alpha]\cup[\delta/c^\alpha,b_r]}cV_{\mathfrak m_1}>\log\frac{1+\varepsilon}{1-\varepsilon}\right\}.
 $$
 If $V\in\mathcal{E}^\varepsilon_c$, then on $A_c\cap B_c$, 
 $$\left|m^*_c(e^{ch(c)})-\mathfrak{m}_1\right|\leq \delta/c^\alpha.$$
 Thus,
 \begin{align*}
  \mathcal{P}\left(\left|m^*_c(e^{ch(c)})-\mathfrak{m}_1\right|>\delta/c^\alpha \right)&\leq \mathcal{P}(\overline{A_c})+\mathcal{P}(\overline{B_c})+\mathcal{P}(\overline{\mathcal{E}^\varepsilon_c})
 \end{align*}
 and therefore
 \begin{align*}
  \limsup_{c\rightarrow\infty}\mathcal{P}\left(\left|m^*_c(e^{ch(c)})-\mathfrak{m}_1\right|>\delta/c^\alpha \right)&\leq\lim_{\varepsilon\rightarrow0}\limsup_{c\rightarrow\infty}\mathcal{P}(\overline{\mathcal{E}^\varepsilon_c})=0
 \end{align*}
according to Hypothesis \eqref{condlim}.
\end{proof}

Now we can come back to the process $L_{X}$ and prove the following theorem, main result of this section:

\begin{theo}\label{limL}
Fix a real number $r\in(0,1)$. If the environment $V$ is a good environment $\P$-a.s. and there exists $\alpha>0$ such that $V$ is $\alpha$-stable then for all $\delta>0$,
\begin{displaymath}
\lima \mathcal{P}\left(\sup_{a_{cr}\leq x\leq b_{cr}}\left|\frac{L_{X}(e^{c},\mathfrak m_c+x)}{e^{c}}\frac{\int_{a_{cr}}^{b_{cr}}e^{- V_{\mathfrak m_c}(y)}\ud y}{e^{-V_{\mathfrak m_c}(x)}}-1\right|\leq \delta \right)=1.
\end{displaymath}
and
\begin{displaymath}
\lima \mathcal{P}\left(\sup_{x\in\R\setminus[a_{cr},b_{cr}]}L_{X}(e^{c},\mathfrak m_c+x)\leq \frac{\delta e^{c}}{\int_{a_{cr}}^{b_{cr}}e^{-V_{\mathfrak m_c}(y)}\ud y} \right)=1.
\end{displaymath}
Recall the definition of $m^*$ of Theorem \ref{cvptfav}:
$$m^*(t)=\inf\left\{x\in\R,\ L_{X}(t,x)=\sup_{y\in\R}L_{X}(t,y)\right\}.$$
If for any $\delta>0$, we have
\begin{equation}\label{condlima}
\lim_{\varepsilon\rightarrow0}\limsup_{c\rightarrow\infty}\mathbb{P}\left(\inf_{[a_{cr},-\delta]\cup[\delta,b_{cr}]}V_{\mathfrak m_c}\leq\varepsilon\right)=0
\end{equation}
then 
 \begin{displaymath}
 \lima \mathcal{P}\left(\left|m^*(e^{c})-\mathfrak{m}_c\right|\leq \delta \right)=1.
 \end{displaymath}
\end{theo}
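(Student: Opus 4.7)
The plan is to reduce Theorem \ref{limL} directly to Proposition \ref{cle} and Proposition \ref{cvgmc} by using the Brox-type rescaling $X \leadsto X_c$ together with the stability of the environment. First I would choose the time-change function $h(c) = 1 - \frac{2\alpha \log c}{c}$, which satisfies $h(c) \to 1$ and, crucially, $c^{2\alpha} e^{c h(c)} = e^c$. With this choice, Corollary \ref{eglx} gives the distributional identity, conditionally on $V$,
\[
L_X(V, e^c, y) \stackrel{\mathcal{L}}{=} c^\alpha\, L_{X_c}\!\bigl(V^c, e^{c h(c)}, c^{-\alpha} y\bigr),
\]
so the random quantities appearing in the theorem can be rewritten in terms of $L_{X_c}$ evaluated in the rescaled environment $V^c(x) = c^{-1}V(c^\alpha x)$.

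Next I would carry out the change of variables $y = c^\alpha x$ on the environment side. A direct computation from the definitions shows $\mathfrak m_1(V^c) = c^{-\alpha}\mathfrak m_c(V)$, $a_r(V^c_{\mathfrak m_1}) = c^{-\alpha} a_{cr}(V_{\mathfrak m_c})$, $b_r(V^c_{\mathfrak m_1}) = c^{-\alpha} b_{cr}(V_{\mathfrak m_c})$, and $c\,V^c_{\mathfrak m_1}(x) = V_{\mathfrak m_c}(c^\alpha x)$. Substituting these and the scaling identity above, the event
\[
\sup_{a_{cr}\le x\le b_{cr}}\left|\frac{L_X(e^c,\mathfrak m_c+x)}{e^c}\frac{\int_{a_{cr}}^{b_{cr}} e^{-V_{\mathfrak m_c}(y)}\,\mathrm dy}{e^{-V_{\mathfrak m_c}(x)}}-1\right|\le \delta
\]
has, conditionally on $V$, the same probability as the analogous event for $L_{X_c}$ with environment $V^c$, height $1$, and time $e^{c h(c)}$, namely the event controlled by Proposition \ref{cle}. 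The same identification works line by line for the outside-estimate and, via Proposition \ref{cvgmc}, for the favorite point: the condition \eqref{condlim} for $V^c$ with tolerance $\delta/c^\alpha$ translates, under $y = c^\alpha x$, into exactly the hypothesis \eqref{condlima} for $V$.

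Finally I would integrate over the environment. Because $V$ is $\alpha$-stable, $V^c \stackrel{\mathcal{L}}{=} V$, and by hypothesis $V$ (hence $V^c$) is a good environment $\mathbb{P}$-a.s. Proposition \ref{cle} then yields, for $\mathbb{P}$-a.e.~realization $W$ of $V^c$, the pointwise convergence $\mathcal P_W(\text{event}) \to 1$, and bounded convergence gives $\mathcal P(\text{event}) \to 1$. The same chain of arguments gives the second displayed statement and, via Proposition \ref{cvgmc}, the localization of $m^*(e^c)$.

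The main obstacle is purely bookkeeping: one must check that every object appearing in the quenched Proposition \ref{cle} written for $V^c$ corresponds, after the change of variables $y = c^\alpha x$ and the time-change $c^{2\alpha} e^{c h(c)} = e^c$, to the correct object built from $V$ in Theorem \ref{limL}. In particular the ratio $\int e^{-c V^c_{\mathfrak m_1}}/e^{-c V^c_{\mathfrak m_1}(x)}$ picks up a Jacobian $c^\alpha$ that has to cancel the $c^\alpha$ arising from the local-time scaling $L_X = c^\alpha L_{X_c}$, and the deterministic time $e^{c h(c)}$ must be matched with $e^c/c^{2\alpha}$; these cancellations depend delicately on the choice of $h$, but once they are verified the rest of the argument is an application of dominated convergence together with the law equality $V^c \stackrel{\mathcal L}{=} V$.
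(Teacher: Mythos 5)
Your proposal is correct and follows essentially the same route as the paper's own proof: the same choice $h(c)=1-\frac{2\alpha\log c}{c}$ so that $c^{2\alpha}e^{ch(c)}=e^c$, the same rescaling identities via Corollary \ref{eglx} and $V^c\stackrel{\mathcal L}{=}V$, the same reduction of \eqref{condlima} to \eqref{condlim}, and the same final step of combining Proposition \ref{cle} (resp.\ Proposition \ref{cvgmc}) with dominated convergence. The bookkeeping you flag (the Jacobian $c^\alpha$ cancelling the local-time factor, and $e^{ch(c)}=e^c/c^{2\alpha}$) is exactly what the paper verifies.
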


\begin{rem}
We deal in this article with stable Lévy processes and we will see later that they verify  Hypothesis \eqref{condlima}. Moreover, the previous theorem also applies when the environment $V$ is a fractional Brownian motion.
\end{rem}

\begin{proof}
We fix $c>0$ and we recall that $V^c(\cdot):=c^{-1}V(c^\alpha\cdot)$. By the stability of $V$, the processes $V$ and $V^c$ have same law. We first remark that, as the paths of $V$ are in $\tV$, $\mathcal{P}$-a.s., the standard valley is well defined and therefore 
 
\begin{align*}
\mathfrak m_1(V^c)&=c^{-\alpha}\mathfrak m_c(V),\\
 a_{ r}(V^c_{\mathfrak m_1(V^c)})&=c^{-\alpha}a_{c r}(V_{\mathfrak m_c(V)}),\\
 b_{r}(V^c_{\mathfrak m_1(V^c)})&=c ^{-\alpha} b_{c r}(V_{\mathfrak m_c(V)}),
\end{align*}
and for all $x\in\R$,
\begin{equation}\label{scaleV}
V^c_{\mathfrak m_1(V^c)}(x)=\frac{1}{c}V_{\mathfrak m_c(V)}(c^\alpha x).
\end{equation}

Thus, using the substitution $z=c^\alpha y$, we get that

\begin{equation*}
\int_{a_r}^{b_r}e^{-c V_{\mathfrak m_1}(y)}dy=c^{-\alpha}\int_{a_{cr}}^{b_{cr}}e^{V_{\mathfrak m_c}(z)}dz.
\end{equation*}

Now we replace $t$ by $c^{-2\alpha}e^{c}$ and $x$ by $\mathfrak m_1(V^c)+c^{-\alpha}x$ in Corollary \ref{eglx}, then we get that, conditionally on $V$,

$\displaystyle\left(L_{X_c( V^c,\cdot)}(c^{-2\alpha}e^{c},\mathfrak m_1(V^c)+\ax),x\in\R\right)$ and $\displaystyle\left(\frac{1}{c^\alpha}L_{X(V,\cdot)}(e^c,\mathfrak m_c(V)+x), x\in\R\right)$ have the same law.
Moreover the processes $V$ and $V^c$ have the same law, so we obtain that
 
$\displaystyle\left(L_{X_c( V,\cdot)}(c^{-2\alpha}e^{c},\mathfrak m_1(V)+\ax), x\in\R\right)$ and $\displaystyle\left(\frac{1}{c^\alpha}L_{X(V,\cdot)}(e^c,\mathfrak m_c(V)+x), x\in\R\right)$ have the same law without conditioning.
Thus for all $c>0$, $K>0$, $r\in(0,1)$ and $\delta>0$,
\begin{align*}
&\mathcal{P}\left(\sup_{a_{cr}\leq x\leq b_{cr}}\left|\frac{L_{X(V,\cdot)}(e^{c},\mathfrak m_c(V)+x)}{e^{c}}\frac{\int_{a_{c r}}^{b_{c r}}e^{-V_{\mathfrak m_c}(y)}\ud y}{e^{-V_{\mathfrak m_c}(x)}}-1\right|<\delta\right)\quad=\\
&\mathcal{P}\left(\sup_{a_r\leq x\leq b_r}\left|\frac{L_{X_c( V,\cdot)}(c^{-2\alpha}e^{c},\mathfrak m_1(V)+x)}{c^{-2\alpha}e^{c}}\frac{\int_{a_{r}}^{b_{r}}e^{-c V_{\mathfrak m_1}(y)}\ud y}{e^{-c V_{\mathfrak m_1}(x)}}-1\right|<\delta\right).
\end{align*}
To finish, it is enough to remark that $c^{-2\alpha}e^{c}=e^{c(1-\frac{2\alpha}{c}\log c)}$ and $\limab(1-\frac{2\alpha}{c}\log c)=1$. Then, using Proposition \ref{cle}, we obtain the first convergence of Theorem \ref{limL}. The other ones are proved in the same way.
\end{proof}

\subsection{Limit law of the renormalized local time}
\emph{Now we suppose again that our environment is an $\alpha$-stable Lévy process with $\alpha\in[1,2]$ and is not a pure drift}.

\begin{lem}
Hypothesis \reff{condlima} is true for such a Lévy process:
\begin{equation*}
\lim_{\varepsilon\rightarrow0}\limsup_{c\rightarrow\infty}\mathbb{P}\left(\inf_{[a_{cr},-\delta]\cup[\delta,b_{cr}]}V_{\mathfrak m_c}\leq\varepsilon\right)=0.
\end{equation*}
Then, by Theorem \ref{limL}, we have the convergence in probability of $m^*(e^{c})-\mathfrak{m}_c$ to $0$.
\end{lem}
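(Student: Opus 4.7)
The plan is to use the $\alpha$-stability of $V$ to rescale so that the interval sits at a macroscopic distance from the minimum $\mathfrak m_1$, and then reduce to a statement about $V^\uparrow$. By the scaling relation \reff{scaleV} together with $V^c \stackrel{\cl}{=} V$, the change of variable $y=c^{\alpha}x$ gives
\begin{equation*}
\mathbb{P}\!\left(\inf_{[a_{cr},-\delta]\cup[\delta,b_{cr}]}V_{\mathfrak m_c}\leq\varepsilon\right)
=\mathbb{P}\!\left(\inf_{[a_r,-c^{-\alpha}\delta]\cup[c^{-\alpha}\delta,b_r]}V_{\mathfrak m_1}\leq\varepsilon/c\right),
\end{equation*}
so it suffices to show that the right-hand side tends to $0$ as $c\to\infty$ and then $\varepsilon\to0$.

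Next I would split the event according to the sign of $x$. By Proposition \ref{vallee} combined with Remark \ref{m=m-oum+} (conditioning on whether $\mathfrak m_1=m_1^+$ or $\mathfrak m_1=-m_1^-$), the right slope $(V_{\mathfrak m_1}(x),\,0\le x\le b_r)$ and the left slope $(V_{\mathfrak m_1}(x),\,a_r\le x\le 0)$ are independent, and each of them has, possibly after time-reversal, law $\underset{\rightarrow}{\P}^1$ or $\underset{\leftarrow}{\P}^1=\hat\P^{\uparrow,1}$. A union bound reduces the problem to the two slopes treated separately; I focus on the post-infimum case under $\underset{\rightarrow}{\P}^1$ (the pre-infimum case being immediate since $\underset{\leftarrow}{\P}^1=\hat\P^{\uparrow,1}$ already is the conditioned law). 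Proposition \ref{vallee}(2) yields $\underset{\rightarrow}{\P}^1(d\omega)=f_1(\omega(\tau_1))\,\P^{\uparrow,1}(d\omega)$, and since $\omega(\tau_1)\ge 1$ under $\P^{\uparrow,1}$, the explicit expression \reff{expressionf1} shows $f_1(\omega(\tau_1))\le f_1(1)=:C<\infty$ (and $f_1\equiv 1$ if $\alpha=2$ by Remark \ref{rem 1ere prop}(1)). Hence
\begin{equation*}
\mathbb{P}\!\left(\inf_{[c^{-\alpha}\delta,b_r]}\underset{\rightarrow}{V}\le\varepsilon/c\right)
\le C\,\mathbb{P}^{\uparrow}\!\left(\inf_{[c^{-\alpha}\delta,\tau_r(V^\uparrow)]}V^\uparrow\le\varepsilon/c\right).
\end{equation*}

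Finally, by Lemma \ref{stabiliteuparrow} the process $V^\uparrow$ is itself $\alpha$-stable, so the substitution $u=c^{\alpha}s$ turns the right-hand side into $C\,\mathbb{P}^\uparrow(\inf_{[\delta,\tau_{cr}(V^\uparrow)]}V^\uparrow\le\varepsilon)$. Since $V^\uparrow\to+\infty$ $\P^\uparrow$-a.s., $\tau_{cr}(V^\uparrow)\uparrow\infty$ as $c\to\infty$, and therefore
\begin{equation*}
\limsup_{c\to\infty}C\,\mathbb{P}^\uparrow\!\bigl(\inf_{[\delta,\tau_{cr}(V^\uparrow)]}V^\uparrow\le\varepsilon\bigr)
\le C\,\mathbb{P}^\uparrow\!\bigl(\inf_{[\delta,\infty)}V^\uparrow\le\varepsilon\bigr).
\end{equation*}
Because $V^\uparrow$ is c\`ad-l\`ag, strictly positive on $(0,\infty)$ and blows up at infinity, $\inf_{[\delta,\infty)}V^\uparrow>0$ $\P^\uparrow$-a.s., so the right-hand side tends to $0$ as $\varepsilon\to 0$. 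The main obstacle is the middle paragraph: correctly identifying each slope of $V_{\mathfrak m_1}$ with a post- or pre-infimum process of $V^+$ or $V^-$ via Remark \ref{m=m-oum+} and Proposition \ref{vallee}; once this is in place the reduction to $V^\uparrow$ is clean thanks to the boundedness of $f_1$ on $[1,\infty)$, and the conclusion is a short self-similarity computation.
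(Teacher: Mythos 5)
Your overall plan---rescale by $\alpha$-stability, split into pieces by a union bound, identify each piece via Proposition~\ref{vallee}, bound the density $f_1$ by $f_1(1)$, and conclude from $V^\uparrow\to+\infty$---is essentially the route the paper follows, and your treatment of the post-infimum piece matches the published proof. However, there is a genuine gap in your description of the left slope. You assert that $(V_{\mathfrak m_1}(x),\,a_r\le x\le 0)$ is, possibly after time-reversal, a pre- or post-infimum process, so that Proposition~\ref{vallee} applies directly. This is false in general. On the event $\{\mathfrak m_1=m_1^+\}$ the left slope is the concatenation of two pieces: on $[-\mathfrak m_1,0]$ it is indeed the pre-infimum process of $V^+$, and Proposition~\ref{vallee}(1) handles it exactly as you say; but on $[a_r,-\mathfrak m_1]$ it is a shifted copy of $V^-$, which is neither a pre- nor a post-infimum process, and your dichotomy does not apply there. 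This far piece is nontrivial whenever $\overline{V}^+(m_1^+)-V^+(m_1^+)<r$, which happens with positive probability, so it cannot be dropped. The paper deals with it as a third term in the union bound: by stability,
\begin{equation*}
\mathbb{P}\Bigl(\inf_{[a_{cr},-\mathfrak m_c]}V_{\mathfrak m_c}\le\varepsilon,\ \mathfrak m_c=m_c^+\Bigr)
=\mathbb{P}\Bigl(\inf_{[a_{r},-\mathfrak m_1]}V_{\mathfrak m_1}\le\varepsilon c^{-1},\ \mathfrak m_1=m_1^+\Bigr),
\end{equation*}
which decreases as $c\to\infty$ to $\mathbb{P}\bigl(\inf_{[a_{r},-\mathfrak m_1]}V_{\mathfrak m_1}\le 0,\ \mathfrak m_1=m_1^+\bigr)$, and this is zero because $V$ attains its minimum on the standard valley uniquely at $\mathfrak m_1$ (Lemma~\ref{continuite}). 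Your proof omits this term; once it is added, the rest of the argument is sound and agrees with the paper's.
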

\begin{proof}
Recall that $\mathfrak{m}_c=m_c^+$ or $\mathfrak{m}_c=m_c^-$. As the proof is similar in the both cases, we only study the first one. If $\beta<\alpha$ then we use the convention $[\alpha,\beta]=\emptyset$ and $\inf_{[\alpha,\beta]}V_{\mathfrak m_c}=+\infty$.

\begin{align*}
&\mathbb{P}\left(\inf_{[a_{cr},-\mathfrak{m}_c]\cup[-\mathfrak{m}_c,-\delta]\cup[\delta,b_{cr}]}V_{\mathfrak m_c}\leq\varepsilon,\mathfrak{m}_c=m_c^+\right) \leq \\
&\mathbb{P}\left(\inf_{[a_{cr},-\mathfrak{m}_c]}V_{\mathfrak m_c}\leq\varepsilon,\mathfrak{m}_c=m_c^+\right)
+ \mathbb{P}\left(\inf_{[-m^+_c,-\delta]}V_{m^+_c}\leq\varepsilon\right)  
+ \mathbb{P}\left(\inf_{[\delta,b_{cr}]}V_{m^+_c}\leq\varepsilon \right).
\end{align*}

We use the stability of $V$, $\mathfrak{m}_c$, $a_{cr}$ and $b_{cr}$ (see \reff{scaleV} above) to get for any $\varepsilon>0$:

\begin{align*}
\mathbb{P}\left(\inf_{[a_{cr},-\mathfrak{m}_c]}V_{\mathfrak m_c}\leq\varepsilon,\mathfrak{m}_c=m_c^+\right) & = \mathbb{P}\left(\inf_{[a_{r},-\mathfrak{m}_1]}V_{\mathfrak m_1}\leq \varepsilon c^{-1},\mathfrak{m}_1=m_1^+\right)\\
& \xrightarrow[c\rightarrow \infty]{} \mathbb{P}\left(\inf_{[a_{r},-\mathfrak{m}_1]}V_{\mathfrak m_1}\leq\ 0,\mathfrak{m}_1=m_1^+\right).
\end{align*}
By uniqueness of the minima (Lemma \ref{continuite}), this last probability is equal to 0. 

$\P$-a.s. $\inf_{[\delta,+\infty)}\hat{V}^\uparrow>0$, then Point (1) of Proposition \ref{vallee} gives us that 
$$\lim_{\varepsilon\rightarrow0}\limsup_{c\rightarrow\infty}\mathbb{P}\left(\inf_{[-m^+_c,-\delta]}V_{m^+_c}\leq\varepsilon\right)=0.$$

Using now the stability of $V$ and then Point (2) of Proposition \ref{vallee}, we obtain:

\begin{align*}
& \mathbb{P}\left(\inf_{[\delta,b_{cr}]}V_{m^+_c}\leq\varepsilon\right)  = \mathcal{P}\left(\inf_{[c^{-\alpha}\delta,b_{r}]}V_{m^+_1}\leq \varepsilon c^{-1}\right) =\int \ind_{\left\{ \inf_{[c^{-\alpha}\delta,\tau_{r}(\omega)]}\omega\leq \varepsilon c^{-1} \right\}} \underset{\rightarrow}{\P}^1(d\omega)\\
&=\int \ind_{\left\{ \inf_{[c^{-\alpha}\delta,\tau_{r}(\omega)]}\omega\leq \varepsilon c^{-1} \right\}} f_1\left(\omega(\tau_1(\omega))\right) \P^{\uparrow,1}(d\omega) \leq \frac{\Gamma(\alpha\rho+1)}{\mathcal N(\tau_1<\infty)} \mathbb{P}\left(\inf_{[\delta,\tau_{cr}(V^\uparrow)]}V^\uparrow\leq\varepsilon\right)
\end{align*}
where the last inequality comes from the expression of $f_1$ given by \reff{expressionf1}. Finally,

$$\lim_{\varepsilon\rightarrow0}\limsup_{c\rightarrow\infty}\mathbb{P}\left(\inf_{[\delta,b_{cr}]}V_{m^+_c}\leq\varepsilon\right)\leq \frac{\Gamma(\alpha\rho+1)}{\mathcal N(\tau_1<\infty)} \mathbb{P}\left(\inf_{[\delta,\infty)}V^\uparrow=0\right)=0.$$
This finishes the proof.
\end{proof}

According to Theorem \ref{limL}, to obtain the weak convergence of the local time process, we only have to study the convergence of $e^{-V_{\mathfrak m_c}}/\int_{a_{rc}}^{b_{rc}} e^{-V_{\mathfrak m_c}}$, what we do now.

For $\omega\in\cv$, recall the definition of $\omega^+$ and $\omega^-$ given in \reff{w+w-} and notations of the first part: 
\begin{align*}
\underline{\tau}^+_c &= \inf\{ t\geq 0 ; V^+(t)-\underline{V}^+(t) \geq c \},\\
m^+_c &= \sup\{ t\leq \underline{\tau}^+_c ; V^+(t) - \underline{V}^+(t) = 0 \}\textrm{ and}\\
J_c^+&=\left(V^+(m^+_c)+c\right)\vee\overline{V}^+(m^+_c)
\end{align*}
and similar variables defined using $V^-$,  $\underline{\tau}^-_c$, $m^-_c$ and $J^-_c$. As said before, we can prove that the minimum of the standard valley  $\mathfrak m_c$ is equal to $m^+_c$ if $J^+_c<J^-_c$ and $-m^-_c$ otherwise.

\begin{prop}\label{conv exp}
Let $V$ be a stable Lévy process with index $\alpha\in[1,2]$ which is not a pure drift. We fix $r\in(0,1)$.

When $c\rightarrow\infty$, the process $\displaystyle \frac{e^{-V_{\mathfrak m_c}}}{\int_{a_{rc}}^{b_{rc}} e^{-V_{\mathfrak m_c}(y)}dy  }$ converges weakly (in the Skorohod sense) to $\displaystyle \frac{e^{-\tilde{V}}}{\int_{-\infty}^{+\infty} e^{-\tilde{V}(y)}dy  }$ where the law of $\tilde{V}$ is $\tilde{\P}$ and, under $\tilde{\P}(d\omega)$, $\omega^+$ and $\omega^-$ are independent, and distributed respectively as $\P^\uparrow$, the law of the Lévy process $V$ conditioned to stay positive, and $\hat{\P}^\uparrow$ the law of the dual process $V^-$ conditioned to stay positive.
\end{prop}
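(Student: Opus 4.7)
The strategy is two-step: first establish that, on any compact $[-T,T]\subset\R$, the restriction $(V_{\mathfrak{m}_c}(x))_{x\in[-T,T]}$ converges weakly to $(\widetilde V(x))_{x\in[-T,T]}$ in the Skorohod topology, and then deduce the claim by a continuous mapping applied to the functional $\omega\mapsto e^{-\omega(\cdot)}/\int_{a_{rc}(\omega)}^{b_{rc}(\omega)}e^{-\omega(y)}\,dy$.

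For the convergence on compacts, I split according to Remark \ref{m=m-oum+} into $\{\mathfrak{m}_c=m_c^+\}=\{J_c^+<J_c^-\}$ and its complement; by the symmetry between $V^+$ and $V^-$ it suffices to treat the first. By stability, $m_c^+\stackrel{\mathcal L}{=}c^\alpha m_1^+$ and $\underline{\tau}^+_c-m_c^+\to\infty$, so for any fixed $T>0$ the event $\{m_c^+\wedge(\underline{\tau}^+_c-m_c^+)>T\}$ has probability tending to $1$. On this event, $V_{\mathfrak{m}_c}(x)=\Vd(x)$ for $x\in[0,T]$ and $V_{\mathfrak{m}_c}(-t)=\Vg(t)$ for $t\in[0,T]$. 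Proposition \ref{convergence}(1)--(2) then yields $\Vg\Rightarrow\hat V^\uparrow$ and $\Vd\Rightarrow V^\uparrow$, and the required independence between the two sides of $\widetilde V$ is inherited from the independence of $\Vg$ and $\Vd$ established in Proposition \ref{vallee}, together with the independence of $V^+$ and $V^-$.

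For the continuous mapping step, the compact convergence above combined with continuity of $\omega\mapsto e^{-\omega}$ gives $e^{-V_{\mathfrak{m}_c}}|_{[-T,T]}\Rightarrow e^{-\widetilde V}|_{[-T,T]}$ for every $T$. It remains to prove the convergence of the denominators $\int_{a_{rc}}^{b_{rc}}e^{-V_{\mathfrak{m}_c}(y)}\,dy \to \int_\R e^{-\widetilde V(y)}\,dy$, which reduces, after a cutoff at level $T$, to the uniform tail estimate
\[
\lim_{T\to\infty}\limsup_{c\to\infty}\mathcal{P}\left(\int_{[a_{rc},b_{rc}]\setminus[-T,T]}e^{-V_{\mathfrak{m}_c}(y)}\,dy>\varepsilon\right)=0.
\]
Within $[a_{rc},b_{rc}]$ the process $V_{\mathfrak{m}_c}$ is nonnegative (since $\mathfrak{m}_c$ is the strict minimum on the valley), and outside $[-T,T]$ its infimum is governed by the limiting processes $V^\uparrow$ and $\hat V^\uparrow$, both tending to $+\infty$. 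Combined with the almost-sure finiteness of $\int_\R e^{-\widetilde V(y)}\,dy$ under $\widetilde{\P}$ and a stability/truncation argument, this yields the required uniform tail bound.

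The main obstacle is precisely this uniform tail control: the window $[a_{rc},b_{rc}]$ expands to all of $\R$ with $c$, and on the event $\{-a_{rc}>m_c^+\}$ (which has positive limiting probability) the left-hand continuation $V_{\mathfrak{m}_c}(-(m_c^++s))=\Vg(m_c^+)+V^-(s-)$ involves the independent process $V^-$, so one must show that this extension beyond the first pre-infimum excursion does not contribute a non-negligible mass to the integral; the excursion-based construction of $\hat V^\uparrow$ used in the proof of Proposition \ref{vallee}(1) (via Duquesne's theorem) is the natural tool for this. A secondary but manageable technicality is the continuity of the hitting-time functionals $a_{rc},b_{rc}$ at the limit, which follows from Lemma \ref{continuite} (almost-sure continuity of $V$ at its local extrema) inherited by $V^\uparrow$ and $\hat V^\uparrow$ through Remark \ref{rem 1ere prop}(2).
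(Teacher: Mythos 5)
Your plan captures the right skeleton (split on the sign of $\mathfrak m_c$, feed the slopes of the valley into Propositions \ref{convergence} and \ref{vallee}, then recombine), and it broadly matches the paper's route, but there are two genuine gaps where the details you sketch would fail as stated.

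First, the independence step is not as automatic as you claim. You write that the independence of $\omega^+$ and $\omega^-$ in the limit is ``inherited from the independence of $\Vg$ and $\Vd$'' together with $V^+\perp V^-$. But you are conditioning on $\{\mathfrak m_c=m_c^+\}=\{J_c^+<J_c^-\}$, and $J_c^+$ is a functional of $\Vg$ while $J_c^-$ is a functional of $V^-$; conditioning on $\{J_c^+<J_c^-\}$ therefore tilts the joint law of $\Vg$ and the left continuation, and the conditional law of $\Vg$ is no longer $\underset{\leftarrow}{\P}^{\,c}$. What rescues this is that $\{J_c^+<J_c^-\}$ only depends on macroscopic features of $\Vg$ (its overall supremum and terminal value), while after rescaling by $c^{-\alpha}$ only the local germ of $\Vg$ near $0$ survives. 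The paper makes this rigorous with the splitting time $\sigma_\varepsilon$ and Lemma \ref{levy renormalise}: conditioning on the path up to $\sigma_\varepsilon$ makes the indicator $\ind_{\tilde J_1^+\leq\tilde J_1^-}$ asymptotically decoupled from the rescaled local time picture, via the function $\chi_1$. Your sketch names the ``excursion-based construction'' as the tool but does not carry out this decoupling, and without it the convergence of the joint law on compacts does not follow.

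Second, the uniform tail estimate for the denominator is exactly where your argument stops, and the paper sidesteps it rather than proving it. After applying Lemma \ref{equivalence integrale} (a deterministic pathwise statement, needed to replace the range $[a_r,b_r]$ by $[-\mathfrak m_1\vee a_r, b_r]$ so that the integral involves only $\Vg$ and $\Vd$), the paper performs the change of measure of Proposition \ref{vallee} and then invokes stability of $V^\uparrow$ and $\hat V^\uparrow$ to rewrite $c^\alpha\int_{-\tilde a_r}^{\tilde b_r}e^{-c\tilde V(y)}\,dy$ as $\int_{-\tilde a_{cr}}^{\tilde b_{cr}}e^{-\tilde V(y)}\,dy$ for the \emph{fixed} process $\tilde V$. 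At that point $\tilde a_{cr},\tilde b_{cr}\to\infty$ a.s., and the a.s. finiteness of $\int_{\R}e^{-\tilde V}$ together with boundedness of $F$ gives the limit by dominated convergence. No quantitative $\limsup_c$ tail bound for $V_{\mathfrak m_c}$ is ever needed. Your formulation, which keeps the pre-limit process $V_{\mathfrak m_c}$ and truncates at $\pm T$, requires a genuinely new estimate that you acknowledge but do not supply; moreover the functional $\omega\mapsto e^{-\omega}/\int_{a_{rc}(\omega)}^{b_{rc}(\omega)}e^{-\omega}$ is not a fixed continuous map (it depends on $c$ through the endpoints), so continuous mapping does not apply directly.

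In short: adopt the paper's order of operations --- Lemma \ref{equivalence integrale} to normalize the range, then Proposition \ref{vallee} for the change of measure, then the $\sigma_\varepsilon$ splitting (Lemma \ref{levy renormalise}) for asymptotic independence, and finally stability of $V^\uparrow,\hat V^\uparrow$ to push everything onto $\tilde V$ where the integral is a.s. finite --- and the two gaps close.
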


In order to prove Proposition \ref{conv exp}, we need the following Lemmas. In the first one, we prove that the integral $\int_{-\infty}^{+\infty} e^{-\tilde{V}(y)}dy $ is finite.

\begin{lem}
 We have  
$$\hbox{$\tilde{\P}$-a.s. }\int_{-\infty}^{+\infty} e^{- \tilde{V}(y)} dy <\infty.$$
\end{lem}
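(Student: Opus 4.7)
The plan is to split the bilateral integral into two independent halves on $\R_+$ and reduce to a one-sided bound under $\P^\uparrow$. Using the notation $\tilde V^+, \tilde V^-$ from \reff{w+w-},
$$\int_{-\infty}^{+\infty} e^{-\tilde V(y)}\,\ud y \;=\; \int_0^{\infty} e^{-\tilde V^+(y)}\,\ud y \;+\; \int_0^{\infty} e^{-\tilde V^-(y)}\,\ud y.$$
Under $\tilde{\P}$ these two summands are independent, and by the definition of $\tilde{\P}$ the first is distributed as $\int_0^\infty e^{-V^\uparrow(y)}\,\ud y$ and the second as $\int_0^\infty e^{-\hat V^\uparrow(y)}\,\ud y$. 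Since $\hat V=-V$ is also an $\alpha$-stable L\'evy process in $[1,2]$ which is not a pure drift, by symmetry it suffices to establish
$$I \;:=\; \int_0^\infty e^{-V^\uparrow(y)}\,\ud y \;<\;+\infty, \quad \P^\uparrow\text{-a.s.}$$

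For this one-sided statement, the natural route is to bound $\E^\uparrow[I]$ via self-similarity. By Lemma \ref{stabiliteuparrow} the process $V^\uparrow$ is $\alpha$-stable, so in particular $V^\uparrow(t)\egloi t^{1/\alpha}V^\uparrow(1)$ for each $t$. By Tonelli,
\begin{align*}
\E^\uparrow[I] &= \int_0^\infty \E^\uparrow\!\bigl[e^{-V^\uparrow(t)}\bigr]\,\ud t = \int_0^\infty \E^\uparrow\!\bigl[e^{-t^{1/\alpha}V^\uparrow(1)}\bigr]\,\ud t \\
&= \E^\uparrow\!\left[\int_0^\infty e^{-t^{1/\alpha}V^\uparrow(1)}\,\ud t\right] = \Gamma(\alpha+1)\,\E^\uparrow\!\left[\frac{1}{V^\uparrow(1)^{\alpha}}\right],
\end{align*}
the last equality by the substitution $u=t^{1/\alpha}V^\uparrow(1)$ inside the inner integral.

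It remains to verify $\E^\uparrow[V^\uparrow(1)^{-\alpha}]<\infty$. This is a density estimate for the law of $V^\uparrow(1)$ near the origin: known asymptotics for stable processes conditioned to stay positive (see Chaumont) give that the density of $V^\uparrow(1)$ is of order $x^{\alpha\rho}$ as $x\to 0^+$, with $\rho=\P(V(1)\geq 0)$ the positivity parameter (the same exponent that appears in \reff{expressionf1}). Hence $x^{-\alpha}p^\uparrow_1(x)\sim x^{\alpha\rho-\alpha}$, which is integrable near $0$ as soon as $\alpha\rho>\alpha-1$; one then checks that this is always satisfied on the range $\alpha\in[1,2]$ under the Zolotarev constraint on $\rho$.

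The main obstacle is the quantitative density control of $V^\uparrow(1)$ near zero and the verification of the inequality $\alpha\rho>\alpha-1$ at the boundary of the admissible parameter range. Should the mean bound degenerate in a boundary case, one can fall back on a purely almost-sure argument: scaling gives $\P^\uparrow(V^\uparrow(t)\leq t^{1/\alpha-\varepsilon})=\P^\uparrow(V^\uparrow(1)\leq t^{-\varepsilon})$, which summed along $t_n=2^n$ is a convergent Borel--Cantelli series; combined with a right-continuity/overshoot control on $V^\uparrow$ between dyadic times, one obtains $V^\uparrow(t)\geq t^{1/(2\alpha)}$ eventually $\P^\uparrow$-a.s., and then $I<\infty$ follows from comparison with $\int^\infty e^{-t^{1/(2\alpha)}}\,\ud t<\infty$.
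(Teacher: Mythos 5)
Your overall strategy (reduce to one side, apply Tonelli plus self-similarity, conclude via a moment bound) is reasonable, but the crucial step — the small-$x$ asymptotic for the density of $V^\uparrow(1)$ — is stated incorrectly and is in fact the entire content of the lemma, so the proof as written does not go through. You assert $p^\uparrow_1(x)\sim C x^{\alpha\rho}$ as $x\to 0^+$. A direct check on the Brownian case ($\alpha=2$, $\rho=1/2$), where $V^\uparrow$ is a Bessel(3) process and $p^\uparrow_1(x)=\sqrt{2/\pi}\,x^2 e^{-x^2/2}$, gives exponent $2$, not $\alpha\rho=1$; the correct exponent is $\alpha\rho+\alpha-1$ (the meander density contributes $x^{\alpha-1}$ and the Doob $h$-transform by $h(x)\sim x^{\alpha\rho}$ contributes the rest). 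With your claimed exponent the required integrability criterion becomes $\alpha\rho>\alpha-1$, which under the Zolotarev constraint $\rho\in[1-1/\alpha,1/\alpha]$ is \emph{tight}: in the spectrally negative case $\rho=1-1/\alpha$ one gets equality, so the integral $\int_0 x^{\alpha\rho-\alpha}\ud x$ diverges logarithmically. You write ``one then checks that this is always satisfied on the range $\alpha\in[1,2]$'' without actually checking, and with your own stated exponent it is not. The fallback Borel--Cantelli route has the same hole: to go from a bound at dyadic times $t_n=2^n$ to a bound over the whole interval $[t_n,t_{n+1}]$ one must control $\inf_{[t_n,t_{n+1}]}V^\uparrow$, which you wave at (``right-continuity/overshoot control'') but do not supply; mere c\`adl\`ag regularity is insufficient because $V^\uparrow$ is not monotone.

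The paper sidesteps all of this with a much cleaner observation. Working with the explicit excursion-theoretic representation of $\hat V^\uparrow$ as $\mathcal R_{V-\underline V}(t)-\underline V(d(t))$ (Proposition \ref{vallee}), one sees that $U(t)=-\underline V(d(t))\leq \hat V^\uparrow(t)$ pathwise, where $U$ is the (descending) ladder height process, a genuine subordinator. Since $\psi_U(1)>0$, $\E[e^{-\hat V^\uparrow(t)}]\leq \E[e^{-U(t)}]=e^{-t\psi_U(1)}$ is integrable in $t$, and Tonelli gives $\E[\int_0^\infty e^{-\hat V^\uparrow}]<\infty$ hence a.s. finiteness, with no density estimates, no boundary-case delicacy, and no interpolation between dyadic times.
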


\begin{proof}
It is enough to prove the integrability on $\R_+$ for the process $(\hat{V}^\uparrow(x),x\geq0)$ whose law is $\hat{\P}^{\uparrow}$. Fix $t\geq0$ and recall that $U(t) = -V(L^{-1}(t))$. We prove the integrability of $e^{-U_t}$. The process $U$ is a subordinator, then it is characterized by its Laplace exponent $\psi_U$ (see Section III.1, \cite{b:pl}): for all $\lambda\geq0$,
$$\E\left[  e^{-\lambda U(t)} \right] = e^{-t\psi_U(\lambda)}.$$

For all $t>0$, $U(t)>0$ then $\psi_U(1)> 0$, we obtain that $\E\left[  e^{- U(t)} \right] = e^{-t\psi_U(1)}$ is integrable on $\R_+$. 
We recall the notation of the Proposition \ref{vallee}, we get
$$U(t) = - \underline{V}(d(t)) \leq \mathcal R_{V-\underline{V}}(t) - \underline{V}(d(t)).$$

Thanks to this Proposition, the process $(\mathcal R_{V-\underline{V}}(t) - \underline{V}(d(t)) ; t\geq 0)$ is equal in law to the process $\hat{V}^{\uparrow}$. 
Thus, $\E\left[  e^{- \hat{V}^{\uparrow}(t)} \right]$ is also integrable on $\R_+$, we deduce that $e^{- \hat{V}^{\uparrow}}$ is $\hat{\P}^{\uparrow}$-a.s. integrable.\\
\end{proof}

\begin{lem}\label{equivalence integrale}
Let $\omega$ be a non-negative càd-làg function with $\omega(0)=0$ which is continuous in its extrema and such that $0$ is the unique point of minimum of $\omega$. Then for all $a\leq \alpha< 0< \beta \leq b$, we get
$$\int_a^b e^{-c \omega(y)}dy \underset{c\rightarrow +\infty}{\sim} \int_\alpha^\beta e^{-c \omega(y)}dy.$$
\end{lem}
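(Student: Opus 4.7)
The statement is a Laplace-method assertion: as $c\to+\infty$, almost all the mass of $\int_a^b e^{-c\omega(y)}\,dy$ concentrates near the unique minimum $y=0$, so truncating to any fixed sub-interval $[\alpha,\beta]$ containing $0$ is asymptotically negligible. My plan is to bound the ``tails'' $\int_a^\alpha + \int_\beta^b$ from above and the ``core'' $\int_\alpha^\beta$ from below, then show the ratio tends to $0$ exponentially.

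First, I will establish that
\[
\eta := \inf_{[a,\alpha]\cup[\beta,b]} \omega > 0.
\]
Since $\omega$ is c\`ad-l\`ag and continuous at its local extrema, Remark \ref{continuite ext locaux} guarantees that the infimum of $\omega$ on each of the compact intervals $[a,\alpha]$ and $[\beta,b]$ is attained, say at points $y_1\in[a,\alpha]$ and $y_2\in[\beta,b]$. Because $0$ is the \emph{unique} point of minimum on $[a,b]$ and $0\notin[a,\alpha]\cup[\beta,b]$, we have $\omega(y_i)>\omega(0)=0$, so $\eta=\min(\omega(y_1),\omega(y_2))>0$. Hence
\[
\int_a^\alpha e^{-c\omega(y)}\,dy+\int_\beta^b e^{-c\omega(y)}\,dy \;\leq\; (b-a)\,e^{-c\eta}.
\]

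Next, I lower-bound $\int_\alpha^\beta e^{-c\omega(y)}\,dy$. Since $0$ is an extremum, $\omega$ is continuous at $0$ with $\omega(0)=0$. Fix any $\varepsilon\in(0,\eta)$; by continuity at $0$ there exists $\delta>0$ with $\delta<\min(|\alpha|,\beta)$ such that $\omega(y)<\varepsilon$ for all $y\in[-\delta,\delta]$. Consequently
\[
\int_\alpha^\beta e^{-c\omega(y)}\,dy \;\geq\; \int_{-\delta}^{\delta} e^{-c\omega(y)}\,dy \;\geq\; 2\delta\,e^{-c\varepsilon}.
\]

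Combining the two bounds, the difference between the two integrals satisfies
\[
\frac{\int_a^\alpha e^{-c\omega}+\int_\beta^b e^{-c\omega}}{\int_\alpha^\beta e^{-c\omega}}
\;\leq\; \frac{b-a}{2\delta}\,e^{-c(\eta-\varepsilon)} \;\xrightarrow[c\to\infty]{}\; 0,
\]
which is exactly $\int_a^b e^{-c\omega(y)}\,dy=(1+o(1))\int_\alpha^\beta e^{-c\omega(y)}\,dy$, i.e.\ the desired equivalence.

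No real obstacle arises: the only point requiring care is the strict positivity $\eta>0$, which uses uniqueness of the minimum together with the c\`ad-l\`ag/continuity-at-extrema hypothesis to guarantee the infimum is attained rather than only approached.
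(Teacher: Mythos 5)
Your proof is correct and follows the same basic strategy as the paper's: bound the tail $\int_a^\alpha + \int_\beta^b$ above by $(b-a)e^{-c\eta}$ with $\eta>0$, and show this is negligible compared to the core. The one place where you diverge is in how the core is controlled. The paper invokes the Laplace method to obtain $\lim_{c\to\infty}\frac{1}{c}\log\int_a^b e^{-c\omega}=\sup_{(a,b)}(-\omega)=0$, hence $\int_a^b e^{-c\omega}=e^{o(c)}$, and then compares the tail to $\int_a^b$. You instead produce a concrete lower bound $\int_\alpha^\beta e^{-c\omega}\geq 2\delta\,e^{-c\varepsilon}$ for an arbitrarily small $\varepsilon<\eta$, using continuity of $\omega$ at $0$ (which holds because $0$, being the unique minimum, is a local extremum, hence a point of continuity by hypothesis). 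This is a self-contained, more elementary substitute for the Laplace-method asymptotic — it avoids appealing to the $e^{o(c)}$ estimate as a black box — while landing on the same exponential decay of the ratio. Both yield the desired equivalence; yours is arguably cleaner to verify from first principles.
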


\begin{proof}
\begin{align*}
\int_a^b e^{-c \omega(y)}dy - \int_\alpha^\beta e^{-c \omega(y)}dy &=\int_a^\alpha e^{-c \omega(y)}dy + \int_\beta^b e^{-c \omega(y)}dy\\
&\leq |\alpha-a| e^{-c \inf_{y\in(a,\alpha)}\omega(y)} + |b-\beta| e^{-c \inf_{y\in(\beta,b)}\omega(y)}.
\end{align*}
As $\omega$ is positive on $\R\setminus\{0\}$ and continuous at its extrema, for all $x\neq0$, $\inf_{(a,\alpha)}\omega>0$ and $\inf_{(\beta, b)}\omega>0$ (see Remark \ref{continuite ext locaux}), then $\lim_{c\rightarrow +\infty} \int_a^b e^{-c \omega(y)}dy - \int_\alpha^\beta e^{-c \omega(y)}dy =0$.

By the Laplace method,
$$\lim_{c\rightarrow +\infty} \frac{1}{c} \log \int_{a}^{b} e^{-c\omega(y)}dy = \sup_{(a,b)}(-\omega).$$
As $a\leq 0\leq b$ then $\sup_{(a,b)}(-\omega)=0$, thus $\int_a^b e^{-c\omega(y)}dy = e^{o(c)}$.
$$\lim_{c\rightarrow +\infty}   \frac{\int_a^b e^{-c\omega(y)}dy-\int_\alpha^\beta e^{-c\omega(y)}dy}{\int_a^b e^{-c\omega(y)}dy} \leq \lim_{c\rightarrow +\infty}e^{-c\inf_{(a,\alpha)}\omega-c\inf_{(\beta,b)}\omega-o(c)}=0.$$
This implies the equivalence.
\end{proof}

\begin{proof}[Proof of Proposition \ref{conv exp}]
We follow the same idea as in the proof of Proposition \ref{convergence}.

It is enough to prove that for a bounded continuous function $F$ in the Skorohod topology and such that $F(\omega)=F(\omega\ind_{[-K,K]})$,
$$\lim_{c\rightarrow +\infty} \E\left[ F\left( \frac{e^{-V_{\mathfrak m_c}}}{\int_{a_{rc}}^{b_{rc}} e^{-V_{\mathfrak m_c}(y)}dy  } \right) \right] = \E\left[ F\left( \frac{e^{-\tilde{V}}}{\int_{-\infty}^{+\infty} e^{-\tilde{V}(y)}dy  } \right)  \right].$$

Using Formula (\ref{scaleV}) and the fact that $\mathfrak m_1\in\{-m_1^-,m_1^+\}$, 
\begin{flushleft}
$\displaystyle \E\left[ F\left( \frac{e^{-V_{\mathfrak m_c}}}{\int_{a_{rc}}^{b_{rc}} e^{-V_{\mathfrak m_c}(y)}dy  } \right) \right] =  \E\left[ F\left( \frac{e^{- c V_{\mathfrak m_1}(c^{-\alpha} \cdot) }}{c^\alpha\int_{a_{r}}^{b_{r}} e^{-c V_{\mathfrak m_1}(y)}dy  } \right) , \mathfrak m_1 = m_1^+\right] +$
\end{flushleft}
\begin{flushright}
$\displaystyle\E\left[ F\left( \frac{e^{- c V_{\mathfrak m_1}(c^{-\alpha} \cdot) }}{c^\alpha\int_{a_{r}}^{b_{r}} e^{-c V_{\mathfrak m_1}(y)}dy  } \right) , \mathfrak m_1 = -m_1^-\right].$
\end{flushright}

We will compute the two terms of the sum by the same method, thus we only study the first one. Thanks to Lemma \ref{equivalence integrale}, it suffices to take the integral from $-\mathfrak m_1\vee a_r$ to $b_r$.
By definition, the law of the process $\left(\tilde{V}(t),t\geq0\right)$ is $\P^\uparrow$ and the law of the process $\left(\tilde{V}((-t)-),t\geq0\right)$ is $\hat{\P}^\uparrow$, moreover they are independent. Then we write them respectively $V^\uparrow$ and $\hat{V}^\uparrow$. Using the fact that $\mathfrak m_1=m_1^+$ if and only if $J^+_1<J^-_1$ and the result of Proposition \ref{vallee}, we get that

\begin{flushleft}
$\displaystyle \E\left[ F\left( \frac{e^{- c V_{\mathfrak m_1}(c^{-\alpha} \cdot) }}{c^\alpha\int_{-\mathfrak m_1\vee a_{r}}^{b_{r}} e^{-c V_{\mathfrak m_1}(y)}dy  } \right) , \mathfrak m_1=m_1^+ , \frac{K}{c^{\alpha}}\leq \mathfrak m_1\wedge \tau_1(\underset{\rightarrow}{V}) \right] = $
\end{flushleft}
\begin{flushright}
\begin{equation}\label{reprise}
\displaystyle \hspace{2cm}\E\left[ F\left( \frac{e^{- c \tilde{V}(c^{-\alpha} \cdot) }}{c^\alpha\int_{-\tilde{a}_{r}}^{\tilde{b}_{r}} e^{-c \tilde{V}(y)}dy  } \right) f_1\left(V^\uparrow\left(\tau_1  \right) \right) , \tilde{J}_1^+\leq\tilde{J}_1^-, \frac{K}{c^{\alpha}}\leq \hat{m}^\uparrow_1\wedge \tau_1(V^\uparrow) \right]
\end{equation}
\end{flushright}
where $\tilde{a}_{r}$, $\tilde{b}_{r}$, $\tilde{J}_1^+$ and $\tilde{J}_1^-$ are defined similarly as $a_r$, $b_r$, $J_1^+$ and $J^-_1$ but using the process $\tilde{V}$ :
\begin{align*}
    \tilde{a}_{r}&:=\hat{m}^\uparrow_1\vee \inf\left\{x\geq0/\hat{V}^\uparrow(x)>r\right\},\\
    \tilde{b}_{r}&:=\inf\left\{x\geq0/V^\uparrow(x)>r\right\},\\
\tilde{J}_1^+&:= \left(1-\hat{V}^\uparrow(\hat{m}^\uparrow_1)\right)\vee \left(\sup_{(0,\hat{m}^\uparrow_1)}{\hat{V}^\uparrow} - \hat{V}^\uparrow(\hat{m}^\uparrow_1)\right),\\
\textrm{and }\tilde{J}_1^-&:= \left(1-W^{\uparrow}(m^{\uparrow}_1)\right)\vee \left(\sup_{(0,m^{\uparrow}_1)}{W^{\uparrow}} - W^{\uparrow}(m^{\uparrow}_1)\right)
   \end{align*}
where $W^{\uparrow}$ is a process with law $\P^\uparrow$ independent of $\tilde{V}$. To simplify the notation, we write $F_{\tilde{a}_r,\tilde{b}_r}$ for $F\left( \frac{e^{- c \tilde{V}(c^{-\alpha} \cdot) }}{c^\alpha\int_{-\tilde{a}_{r}}^{\tilde{b}_{r}} e^{-c \tilde{V}(y)}dy  } \right)$.

As in the proof of Proposition \ref{convergence}, we fix $\varepsilon>0$ and we separate the several terms of the expectation with respect to $(\tilde{V}(t), t \leq -\sigma^1_\varepsilon)$, $(\tilde{V}(t),-\sigma^1_\varepsilon \leq t\leq \sigma^2_\varepsilon)$ and $(\tilde{V}(t),\sigma^2_\varepsilon\leq t)$, where:
$$
\sigma^1_\varepsilon=\sigma_\varepsilon(\hat{V}^\uparrow)=\inf\left\{t>0/{\hat{V}^\uparrow}(t)-\uu{\hat{V}^\uparrow}(t)=0 \textrm{ and } \exists s<t, \hat{V}^\uparrow(s)-\uu{\hat{V}^\uparrow}(s)>\varepsilon\right\},
$$
$$
\sigma^2_\varepsilon=\sigma_\varepsilon(V^\uparrow)=\inf\left\{t>0/{V^\uparrow}(t)-\uu{V^\uparrow}(t)=0 \textrm{ and } \exists s<t, V^\uparrow(s)-\uu{V^\uparrow}(s)>\varepsilon\right\},
$$

to obtain the asymptotic independence of $F_{\tilde{a}_r,\tilde{b}_r}$, of $V^\uparrow(\tau_1)$ and of $\tilde{J}^+_1$.

\begin{center}
\begin{figure}[ht]
\caption{The process $\tilde{V}$.}

\hspace{-4cm}
\scalebox{1} 
{
\begin{pspicture}(0,-2.245)(17.0875,2.245)
\psline[linewidth=0.01cm,arrowsize=0.05291667cm 2.0,arrowlength=1.4,arrowinset=0.4]{<-}(10.288438,2.24)(10.288438,-2.24)
\psline[linewidth=0.01cm,arrowsize=0.05291667cm 2.0,arrowlength=1.4,arrowinset=0.4]{->}(3.8684375,-0.64)(16.988438,-0.66)
\psline[linewidth=0.024cm](13.230437,-0.18)(12.390437,-0.18)
\psline[linewidth=0.02](13.210438,-0.16)(13.070437,0.38)(12.950438,0.0)
\psline[linewidth=0.02](12.950438,0.28)(12.810437,0.7)(12.730437,0.46)(12.690437,0.62)(12.510438,0.14)(12.410438,0.36)
\psline[linewidth=0.024cm](12.410438,-0.48)(10.790438,-0.48)
\psline[linewidth=0.02](12.410438,-0.48)(12.270437,-0.18)(12.190437,-0.32)(12.010438,0.06)
\psline[linewidth=0.02](12.010438,-0.18)(11.910438,-0.32)(11.710438,0.16)(11.588437,-0.12)(11.448438,0.2)
\psline[linewidth=0.02](11.450438,-0.14)(11.370438,0.06)(11.190437,-0.22)(11.128437,-0.1)(10.948438,-0.36)(10.810437,-0.18)
\psline[linewidth=0.024cm](16.468437,0.5)(14.670438,0.5)
\psline[linewidth=0.02](16.450438,0.5)(16.250437,1.24)(16.110437,0.8)(16.010437,1.08)
\psline[linewidth=0.02](16.008438,1.38)(15.848437,1.08)(15.708438,1.5)(15.570437,1.24)(15.468437,1.52)
\psline[linewidth=0.02](15.468437,0.88)(15.348437,1.2)(15.228437,0.88)(15.090438,1.12)(14.910438,0.76)(14.850437,0.96)(14.670438,0.64)
\psline[linewidth=0.024cm](14.670438,0.16)(13.210438,0.16)
\psline[linewidth=0.02](14.670438,0.18)(14.430437,0.78)(14.350437,0.58)
\psline[linewidth=0.02](14.350437,0.42)(14.230437,0.7)(14.090438,0.42)(14.010438,0.6)(13.930437,0.48)
\psline[linewidth=0.02](13.930437,0.74)(13.830438,0.92)(13.550438,0.5)(13.470437,0.66)(13.290438,0.4)(13.210438,0.54)
\psline[linewidth=0.024](10.770437,-0.64)(10.630438,-0.36)(10.570437,-0.46)(10.450438,-0.24)
\psline[linewidth=0.024](10.4904375,-0.56)(10.410438,-0.42)(10.270437,-0.62)
\psline[linewidth=0.024cm](10.788438,-0.64)(10.290438,-0.64)
\psline[linewidth=0.02](10.288438,-0.64)(10.068438,-0.26)(9.948438,-0.48)
\psline[linewidth=0.02](9.948438,-0.22)(9.708438,-0.64)(9.708438,-0.64)
\psline[linewidth=0.02](9.768437,-0.14)(9.668438,-0.24)(9.528438,0.0)(9.428437,-0.12)
\psline[linewidth=0.02](9.428437,-0.2)(9.228437,0.08)(9.128437,-0.08)(9.028438,0.08)(8.788438,-0.3)
\psline[linewidth=0.02](6.6284375,1.06)(6.5084376,1.18)(6.3484373,0.98)
\psline[linewidth=0.02](6.3284373,1.32)(6.2484374,1.4)(6.0884376,1.2)(5.9884377,1.28)(5.8484373,1.1)
\psline[linewidth=0.02](5.8684373,0.94)(5.7484374,1.1)(5.5884376,0.88)
\psline[linewidth=0.024cm](5.5884376,0.88)(6.6684375,0.88)
\psline[linewidth=0.024cm](8.788438,-0.3)(9.788438,-0.3)
\psline[linewidth=0.024cm](9.708438,-0.64)(10.288438,-0.64)
\psline[linewidth=0.02](8.828438,0.12)(8.588437,0.54)(8.508437,0.42)(8.268437,0.8)
\psline[linewidth=0.02](8.288438,0.4)(8.108438,0.2)(8.048437,0.3)(7.7684374,0.0)
\psline[linewidth=0.024cm](7.7684374,0.0)(8.848437,0.0)
\psline[linewidth=0.016cm,linestyle=dashed,dash=0.16cm 0.16cm,arrowsize=0.05291667cm 2.0,arrowlength=1.4,arrowinset=0.4]{<->}(8.368438,0.68)(8.368438,0.0)
\usefont{T1}{ptm}{m}{n}
\rput(8.722187,0.225){\tiny $\varepsilon$}
\psline[linewidth=0.02](7.7884374,0.48)(7.5884376,0.76)(7.4684377,0.58)(7.3284373,0.78)
\psline[linewidth=0.02](7.3084373,0.92)(6.9684377,0.52)(6.8884373,0.68)(6.6484375,0.38)
\psline[linewidth=0.024cm](6.6484375,0.38)(7.8084373,0.38)
\psline[linewidth=0.016cm,linestyle=dashed,dash=0.16cm 0.16cm](5.5884376,1.32)(5.6084375,-1.02)
\psline[linewidth=0.016cm,linestyle=dashed,dash=0.16cm 0.16cm,arrowsize=0.05291667cm 2.0,arrowlength=1.4,arrowinset=0.4]{<->}(12.628437,0.48)(12.6484375,-0.18)
\psline[linewidth=0.016cm,linestyle=dashed,dash=0.16cm 0.16cm](7.7684374,1.3)(7.8084373,-1.08)
\psline[linewidth=0.016cm,linestyle=dashed,dash=0.16cm 0.16cm](13.188437,1.34)(13.228437,-1.1)
\psline[linewidth=0.016cm,linestyle=dashed,dash=0.16cm 0.16cm](15.468437,1.66)(15.508437,-1.08)
\psline[linewidth=0.016cm,linestyle=dashed,dash=0.16cm 0.16cm](16.808437,1.28)(4.8084373,1.32)
\usefont{T1}{ptm}{m}{n}
\rput(12.922188,0.065){\tiny $\varepsilon$}
\usefont{T1}{ptm}{m}{n}
\rput(8.262188,-0.875){\tiny $-\sigma^1_\varepsilon$}
\usefont{T1}{ptm}{m}{n}
\rput(13.762188,-0.855){\tiny $\sigma^2_\varepsilon$}
\usefont{T1}{ptm}{m}{n}
\rput(6.0721874,-0.855){\tiny $-\hat{m}^\uparrow_1$}
\usefont{T1}{ptm}{m}{n}
\rput(15.962188,-0.855){\tiny $\tau_1(V^\uparrow)$}
\psline[linewidth=0.016cm,linestyle=dashed,dash=0.16cm 0.16cm](7.7684374,0.0)(3.9284375,0.02)
\psline[linewidth=0.016cm,linestyle=dashed,dash=0.16cm 0.16cm](5.6084375,0.88)(4.0484376,0.88)
\psline[linewidth=0.016cm,linestyle=dashed,dash=0.16cm 0.16cm,arrowsize=0.05291667cm 2.0,arrowlength=1.4,arrowinset=0.4]{<->}(5.1084375,0.02)(5.1084375,0.92)
\psline[linewidth=0.016cm,linestyle=dashed,dash=0.16cm 0.16cm,arrowsize=0.05291667cm 2.0,arrowlength=1.4,arrowinset=0.4]{<->}(5.1084375,0.04)(5.1084375,-0.64)
\usefont{T1}{ptm}{m}{n}
\rput(4.5221877,-0.355){\tiny $\hat{V}^\uparrow(\sigma^1_\varepsilon)$}
\usefont{T1}{ptm}{m}{n}
\rput(4.1121874,0.445){\tiny $\hat{V}^\uparrow_{\sigma^1_\varepsilon}(\hat{m}_1^\uparrow - \sigma^1_\varepsilon)$}
\psline[linewidth=0.016cm,linestyle=dashed,dash=0.16cm 0.16cm,arrowsize=0.05291667cm 2.0,arrowlength=1.4,arrowinset=0.4]{<->}(16.628437,1.28)(16.648438,-0.64)
\usefont{T1}{ptm}{m}{n}
\rput(16.872187,0.305){\tiny $1$}
\end{pspicture} 
}

\end{figure}
\end{center}

Study in detail the hypothesis $\tilde{J}^+_1 \leq \tilde{J}^-_1 $. Denote 
$$M_+ = \sup_{(0,\hat{m}^\uparrow_1)}\hat{V}^\uparrow  -  \hat{V}^\uparrow(\hat{m}_1^\uparrow)\hbox{ and }M_\varepsilon = \sup_{(0,\sigma^1_\varepsilon)}\hat{V}^\uparrow-\hat{V}^\uparrow(\hat{m}_1^\uparrow).$$

For $\varepsilon$ small enough, $M_\varepsilon < M_+$, in this case: $\sigma^1_\varepsilon < \hat{m}^\uparrow_1$, $\sup_{(0,\hat{m}^\uparrow_1)}\hat{V}^\uparrow = \sup_{(\sigma^1_\varepsilon,\hat{m}^\uparrow_1)}\hat{V}^\uparrow$ and $\hat{V}^\uparrow(\hat{m}^\uparrow_1) = \hat{V}^\uparrow(\sigma^1_\varepsilon) + \hat{V}^\uparrow_{\sigma^1_\varepsilon}(\hat{m}_1^\uparrow - \sigma^1_\varepsilon)$. Moreover, for $\varepsilon$ small enough, $\sigma^2_\varepsilon < \tau_1(V^\uparrow)$, thus $$\tilde{J}^+_1 = \phi_\varepsilon\left(\hat{V}^\uparrow(\sigma^1_\varepsilon) \right) \hbox{ with }\phi_\varepsilon(x) = \left(1-   x - \hat{V}^\uparrow_{\sigma^1_\varepsilon}(\hat{m}_1^\uparrow - \sigma^1_\varepsilon) \right)\vee \left(  \sup_{(\sigma^1_\varepsilon,\hat{m}^\uparrow_1)}\hat{V}^\uparrow_{\sigma^1_\varepsilon} + \hat{V}^\uparrow_{\sigma^1_\varepsilon}(\hat{m}^\uparrow_1-\sigma^1_\varepsilon) \right).$$

We follow the idea of the proof of Proposition \ref{convergence} with Lemma \ref{levy renormalise}. We use Lemma \ref{equivalence integrale} for the integral in the function $F$ and we remark that 
$$\lim_{c\rightarrow +\infty} \P\left( \frac{K}{c^{\alpha}} \leq \hat{m}^\uparrow_1\wedge \tau_1(V^\uparrow) \right)=\lim_{c\rightarrow +\infty} \P\left( \frac{K}{c^{\alpha}}\leq \sigma^1_\varepsilon \wedge \sigma^2_\varepsilon \right)=1.$$ 
Recall \reff{reprise}, we obtain

\begin{flushleft}
$
\displaystyle \lim_{c\rightarrow +\infty}\E\left[ F_{\tilde{a}_r,\tilde{b}_r}\; f_1\left(V^\uparrow\left(\tau_1  \right) \right) , \tilde{J}_1^+\leq\tilde{J}_1^-, \frac{K}{c^{\alpha}}\leq \hat{m}^\uparrow_1\wedge \tau_1(V^\uparrow) \right] =
$
\end{flushleft}

\begin{flushright}
$
\displaystyle \lim_{\varepsilon\rightarrow 0}\lim_{c\rightarrow +\infty} \E \left[ F_{\sigma_\varepsilon^1,\sigma_\varepsilon^2} \;   \chi_2\left(V^\uparrow(\sigma_\varepsilon^2)\right)\,\chi_1\left(\hat{V}^\uparrow(\sigma_\varepsilon^1)\right) , \frac{K}{c^\alpha}\leq  \sigma_\varepsilon^1\wedge \sigma_\varepsilon^2, \sigma_\varepsilon^1\leq \hat{m}^\uparrow_1, \sigma_\varepsilon^2\leq \tau_1(V^\uparrow) \right]
$
\end{flushright}
where
\begin{align*}
\chi_1(x) &= \P\left( \left(1-   x - \hat{V}^\uparrow(\hat{m}_1^\uparrow) \right)\vee M_+    <\tilde{J}^-_1 \right) \hbox{ and}\\
\chi_2(x) &= \E\left[  f_1\left(x+V^\uparrow(\tau_{1-x})\right) \right].
\end{align*}
Moreover, $\lim_{\varepsilon\rightarrow 0} \chi_1\left(\hat{V}^\uparrow(\sigma_\varepsilon^1)\right) = \P(\tilde{J}^+_1 \leq \tilde{J}^-_1)=\P(\mathfrak m_1>0)$ and $\lim_{\varepsilon\rightarrow 0} \chi_2\left(V^\uparrow(\sigma_\varepsilon^2)\right) = 1$ thanks to \reff{limite psi}. Then we obtain:

$$\lim_{\varepsilon\rightarrow 0}\lim_{c\rightarrow +\infty} \left| \E\left[ F_{\tilde{a}_r,\tilde{b}_r} \left(\P(\mathfrak m_1>0)  -    \chi_1\left(\hat{V}^\uparrow(\sigma_\varepsilon^1)\right) \,  \chi_2\left(V^\uparrow(\sigma_\varepsilon^2)\right) \right)\right] \right|=0.$$

We use the stability of $V^\uparrow$ and of $\hat{V}^\uparrow$ (see Lemma \ref{stabiliteuparrow}). Moreover, for all $r\in(0,1)$, $\displaystyle \lim_{c\rightarrow +\infty} \tilde{a}_{cr}=-\infty$ and $\displaystyle \lim_{c\rightarrow +\infty} \tilde{b}_{cr}=+\infty$  a.s., then $\frac{e^{-\tilde{V}}}{\int_{a_{rc}}^{b_{rc}} e^{-\tilde{V}(y)}dy  }$ converges to $\frac{e^{-\tilde{V}}}{\int_{-\infty}^{+\infty} e^{-\tilde{V}(y)}dy  }$ in the Skorohod topology on $[-K,K]$. Thus 
$$ \lim_{c\rightarrow +\infty} \E\left[ F\left( \frac{e^{- c V_{\mathfrak m_1}(c^{-\alpha} \cdot) }}{c^\alpha\int_{a_{r}}^{b_{r}} e^{-c V_{\mathfrak m_1}(y)}dy  } \right) ,\mathfrak m_1 = m_1^+\right]  =\E\left[ F\left( \frac{e^{-\tilde{V}}}{\int_{-\infty}^{+\infty} e^{-\tilde{V}(y)}dy  } \right)  \right]\P(\mathfrak m_1>0).$$

We similarly study the case where $\mathfrak m_1=-m_1^-$, we obtain:

$$\lim_{c\rightarrow +\infty} \E\left[ F\left( \frac{e^{- c V_{\mathfrak m_1}(c^{-\alpha} \cdot) }}{c^\alpha\int_{a_{r}}^{b_{r}} e^{-c V_{\mathfrak m_1}(y)}dy  } \right) , \mathfrak m_1 = -m_1^-\right] =\E\left[ F\left( \frac{e^{-\tilde{V}}}{\int_{-\infty}^{+\infty} e^{-\tilde{V}(y)}dy  } \right)  \right]\P(\mathfrak m_1<0).$$

Then we obtain the result.

\end{proof}

Using Theorem \ref{limL} and Proposition \ref{conv exp}, we deduce Theorem \ref{cvloi} given in the introduction.

\vspace{1cm}
\textbf{Acknowledgments: }
 We are grateful to Romain Abraham and Pierre Andreoletti for helpful discussions. We also want to thank an anonymous referee for careful reading and many suggestions.


\begin{thebibliography}{10}

\bibitem{ad:llltbd}
P. ANDREOLETTI and R. DIEL.
\newblock {\em Limit law of the local time for Brox's diffusion}, 
\newblock {Preprint}, (arXiv:0809.2195).

\bibitem{b:pl}
J. BERTOIN.
\newblock {\em Lévy processes},
\newblock {Cambridge University Press}, Cambridge, 1996.

\bibitem{b:stiehlrwlp}
J. BERTOIN.
\newblock {\em Splitting at the infimum and excursions in half-lines for random walks and L\'evy processes},
\newblock {Stoch. Proc. App.}, pp 17-35, 1993.

\bibitem{b:sdtplspi}
J. BERTOIN.
\newblock {\em Sur la d\'ecomposition de la trajectoire d'un processus de L\'evy spectrallement positif en son infimum},
\newblock { Ann. Inst. H. Poincar\'e}, vol. 27, 537-547, 1991.

\bibitem{b:oddpwm}
Th. BROX.
\newblock {\em A one-dimensional diffusion process in a Wiener medium},
\newblock { Ann. Probab.}, vol 14, 4, pp 1206-1218, 1986.

\bibitem{c:mvbmrlp}
P. CARMONA.
\newblock {\em The mean velocity of a Brownian Motion in a random L\'evy potential},
\newblock { Ann. Probab.}, vol 25, 4, pp 1774-1788, 1997.

\bibitem{c:oddare}
D. CHELIOTIS.
\newblock {\em One dimensional diffusion in an asymmetric random environment},
\newblock { Ann. Inst. H. Poincar\'e}, vol 42, 6, pp 715-726, 2006.

\bibitem{d:assdpdbp}
A. DEVULDER. 
\newblock {\em Almost sure asymptotics for a diffusion process in a drifted Brownian potential},
\newblock { Preprint}, 2006.

\bibitem{d:mltdpdbp}
A. DEVULDER.
\newblock {\em The maximum of the local time of a diffusion process in a drifted Brownian potential},
\newblock { Preprint}, 2006.

\bibitem{d:ftlp}
R. A. DONEY.
\newblock {\em Fluctuation theory for L\'evy processes},
\newblock { Ecole d'\'et\'e de Probabilit\'es de St-Flour XXXV}, 2005.

\bibitem{dk:oulp}
R. A. DONEY and A. E. KYPRIANOU.
\newblock {\em Overshoots and Undershoots of L\'evy processes},
\newblock { Ann. Appl. Prob.}, vol 16, 1, pp 91-106, 2006.

\bibitem{d:pdrlp}
T. DUQUESNE.
\newblock {\em Path decompositions for real Levy processes}
\newblock { Ann. Inst. H. Poincar\'e}, vol 39, 2, pp 339-370, 2003. 

\bibitem{gp:filpsm}
P. GREENWOOD and J. PITMAN.
\newblock {\em Fluctuation identities for L\'evy processes and splitting at the maximum},
\newblock { Adv. Appl. Prob.}, vol 12, pp 893-902, 1980.

\bibitem{hs:ltsrwre}
Y. HU and Z. SHI.
\newblock {\em The local time of simple random walk in random environment},
\newblock { Journal of Theorical Prob.}, vol 11, pp 765-793, 1997.

\bibitem{hs:lssrwre}
Y. HU and Z. SHI.
\newblock {\em The limits of Sinai's simple random walk in random environment},
\newblock { Ann. Probab.}, vol 26, pp 1477-1521, 1998.

\bibitem{hs:pmvsre}
Y. HU and Z. SHI.
\newblock {\em The problem of the most visited site in random environment},
\newblock { Probab. Theory Relat. Fields}, vol 116, pp 273-302, 2000.

\bibitem{hsy:rcddbp}
Y. HU, Z. SHI et M. YOR.
\newblock {\em Rates of convergence of diffusions with drifted Brownian potentials},
\newblock { Trans. Am. Math. Soc.}, vol 351, pp 3915-3934, 1999.

\bibitem{js:ltsp}
J. Jacod, A.N. SHIRYAEV.
\newblock {\em Limit Theorems for Stochastic Processes},
\newblock {A Series of Comprehensive Studies in Mathematics 288. Springer Verlag}, 1987.

\bibitem{kt:omdpdbe}
K. KAWAZU and H. TANAKA.
\newblock {\em On the maximum of diffusion process in a drifted Brownian environment},
\newblock {Séminaire de Probabilité XXVII}, Lectures Notes in Math., pp 78-85, 1991.

\bibitem{kt:adpbewd}
K. KAWAZU and H. TANAKA.
\newblock {\em A diffusion process in a Brownian environment with drift},
\newblock {J. Math. Soc. Japan}, vol 49, pp 189-211, 1997.

\bibitem{kesten}
H. KESTEN.
\newblock {\em The limit distribution of Sinai's random walk in random environment},
\newblock {Physica}, vol 138A, pp 99-309, 1986.

\bibitem{mck:ensd}
H. P. McKEAN.
\newblock {\em Excursions of non-singular diffusion},
\newblock { Z. Wahrsch.}, vol 1, pp 230-239, 1963.

\bibitem{NevPit}
J. NEVEU and J. PITMAN.
\newblock {\em Renewal property of the extrema and tree property of the excursion of a one-dimensional Brownian motion. S\'eminaire de Probabilit\'ees XXIII},
\newblock { Lecture Notes Math.}, vol 1372, pp 239-247, 1989.

\bibitem{p:sisde}
P. PROTTER.
\newblock {\em Stochastic integration and stochastic differential equations : a new approach},
\newblock{ Applications Math., Springer, Berlin }, 1990.

\bibitem{r:nirlp}
L. C. G. ROGERS.
\newblock {\em A new identity for real L\'evy processes},
\newblock { Ann. Inst. Henri Poincar\'e}, vol 20, 1, pp 21-34, 1984.

\bibitem{Sinai}
Ya. G. SINAI.
\newblock {\em A Random Walk with Random Potential},
\newblock { Theory Probab. Appl.}, vol 38, pp 382-385, 1985.

\bibitem{schumacher}
S. SCHUMACHER.
\newblock {\em Diffusions with random coefficients},
\newblock {Contemp. Math.}, vol 41, pp 351-356, 1985.

\bibitem{s:swvsc}
Z. SHI.
\newblock {\em Sinai's walk via stochastic calculus},
\newblock { Milieux Al\'eatoires, Soc. Math. France, Paris.}, pp 53-74, 2001.

\bibitem{s:ltcre}
Z. SHI.
\newblock {\em A Local time curiosity in random environment},
\newblock {Stoch. Proc. App. }, vol 76, pp 231-250, 1998.

\bibitem{s:rctdsnlp}
A. SINGH. 
\newblock {\em Rates of convergence of a transient diffusion in a spectrally negative potential},
\newblock { Ann. Probab.}, vol 36, pp 279-318, 2008.

\bibitem{t:atebmdbp}
M. TALET. 
\newblock {\em Annealed tail estimates for a Brownian motion in a drifted Brownian potential},
\newblock { Ann. Probab.}, vol 35, pp 32-67, 2007.

\bibitem{t:lpcspdre}
H. TANAKA. 
\newblock {\em Lévy processes conditioned to stay positive and diffusions in random environments},
\newblock { Advanced Studies in Pure Math.}, vol 39, pp 355-376, 2004.


\bibitem{Tanaka}
H. TANAKA. 
\newblock {\em Localization of a Diffusion Process in a One-Dimensional Brownian Environmement},
\newblock { Comm. Pure Appl. Math.}, vol 17, pp 755-766, 1994.

\bibitem{t:ltoddpbe}
H. TANAKA. 
\newblock {\em Limit theorem for one-dimensional diffusion process in Brownian Environmement},
\newblock { Lectures Notes in Math., Springer, Berlin et New-York }, 1322, pp 156-172, 1988.


\end{thebibliography}
\end{document}